\documentclass[10pt,a4paper]{article}

\newif\ifsiamart
\makeatletter%
\@ifclassloaded{siamart171218}%
  {\siamarttrue}%
  {\siamartfalse}%
\makeatother%

\ifsiamart\else
\usepackage{authblk}
\fi
\usepackage{silence}
\WarningFilter{latex}{Overwriting file}
\usepackage[T1]{fontenc}
\usepackage[utf8]{inputenc}
\usepackage{csquotes}
\usepackage[UKenglish]{babel}
\usepackage{paralist}
\usepackage[shortlabels]{enumitem}
\usepackage[margin=.6in]{geometry}
\usepackage[dvipsnames]{xcolor}
\usepackage{graphicx}
\usepackage{bbm}
\usepackage{amsmath}
\usepackage{amssymb}
\usepackage{amsfonts}
\usepackage{stmaryrd}
\usepackage{mathrsfs}
\usepackage{mathtools}
\usepackage{epstopdf}
\usepackage{comment}

\usepackage{xspace}
\usepackage{float}
\usepackage{stackengine}

\ifsiamart\else
\usepackage{amsthm}
\usepackage{hyperref}
\hypersetup{%
    linktocpage=true,
    colorlinks=true,
    citecolor=red,
    linkcolor=DarkOrchid,
    urlcolor=blue,
}
\usepackage[nameinlink,capitalise]{cleveref}
\newcommand{\email}[1]{\href{mailto:#1}{#1}}

\fi

\usepackage{setspace}

\DeclarePairedDelimiter\abs{\lvert}{\rvert}
\newcommand{\norm}[1]{\left\lVert#1\right\rVert}

\DeclareMathOperator{\I}{I}

\DeclareMathOperator{\Law}{Law}
\DeclareMathOperator{\e}{e}

\DeclareMathOperator*{\trace}{tr}

\newcommand{\hessian}{\operatorname{D}^2}
\newcommand{\placeholder}{\mathord{\color{black!33}\bullet}}%
\newcommand{\range}[2]{\llbracket #1, #2 \rrbracket}

\renewcommand{\d}{\mathrm d}

\newcommand{\N}{\mathbf{N}}
\newcommand{\R}{\mathbf R}
\newcommand{\real}{\R}
\newcommand{\nat}{\N}
\newcommand{\proba}{\mathbf P}
\newcommand{\expect}{\mathbf{E}}
\renewcommand{\t}{\mathsf T}

\renewcommand{\leq}{\leqslant}
\renewcommand{\geq}{\geqslant}
\renewcommand{\le}{\leqslant}
\renewcommand{\ge}{\geqslant}

\ifsiamart
\newtheorem{remark}[theorem]{Remark}
\newtheorem{assumption}{Assumption}

\else
\theoremstyle{plain}

\newtheorem{theorem}{Theorem}
\newtheorem{lemma}{Lemma}
\newtheorem{corollary}[theorem]{Corollary}
\newtheorem{proposition}[theorem]{Proposition}

\newtheorem{remark}{Remark}

\newtheorem{assumption}{Assumption}

\fi

\crefname{lemma}{Lemma}{Lemmas}
\crefname{remark}{Remark}{Remarks}
\crefname{assumption}{Assumption}{Assumptions}
\crefname{proposition}{Proposition}{Propositions}
\crefname{section}{Section}{Sections}
\crefname{subsection}{Subsection}{Subsections}
\crefname{equation}{}{}
\Crefname{equation}{Equation}{Equations}
\newlist{lemmaenum}{enumerate}{3}
\setlist[lemmaenum]{label=(\alph*),ref=\,(\alph*)}
\crefname{lemmaenum}{Lemma}{Lemmas}
\newlist{assumpenum}{enumerate}{5}
\setlist[assumpenum]{label=(\alph*), font={\bfseries}}
\newlist{auxenum}{enumerate}{2}
\setlist[auxenum]{label=(\alph*),ref=(\alph*)}
\crefname{auxenumi}{Item}{Items}
\crefname{enumi}{}{}
\crefname{equation}{}{}
\crefname{assumpenumi}{}{}
\crefname{assumpenumii}{}{}
\Crefname{assumpenumi}{Assumption}{Assumptions}
\Crefname{assumpenumii}{Assumption}{Assumptions}
\Crefname{assumpenumii}{Assumption}{Assumptions}
\crefrangeformat{assumpenumi}{#3#1#4--#5#2#6}
\Crefname{lemmaenumi}{Part}{Parts}
\Crefname{figure}{Figure}{Figures}
\numberwithin{equation}{section}

\usepackage{tikz}
\usetikzlibrary{shapes.misc}
\usetikzlibrary{positioning}

\usepackage[url=true,backend=biber,style=trad-abbrv,url=false,doi=false,isbn=false]{biblatex}
\addbibresource{revised.bib}
\DeclareFieldFormat{volume}{volume \textbf{#1}}
\DeclareFieldFormat{pages}{}
\DeclareFieldFormat[article]{volume}{\textbf{#1}}

\ifsiamart\else
\let\oldparagraph=\paragraph
\renewcommand\paragraph[1]{\oldparagraph{#1.}}
\fi

\emergencystretch=1em

\onehalfspacing

\title{Sharp Propagation of Chaos for the Ensemble Langevin Sampler}
\author{U. Vaes\thanks{MATHERIALS team, Inria Paris \& CERMICS, \'Ecole des Ponts, France (\email{urbain.vaes@inria.fr})}}

\newcommand{\X}{\mathscr X}
\newcommand{\xn}[1]{X^{#1}}
\newcommand{\wn}[1]{W^{#1}}
\newcommand{\yn}[1]{Y^{#1}}
\newcommand{\xnl}[1]{\overline X^{#1}}
\newcommand{\mfl}{\overline{\rho}}
\newcommand{\emp}{\mu^J}
\newcommand{\empmfl}{\overline \mu^J}
\newcommand{\xl}{\overline X}
\renewcommand{\r}{r}
\newcommand{\cont}{\mathrm C}
\renewcommand{\l}{c_{\ell}}
\renewcommand{\u}{c_{u}}
\newcommand{\lipphi}{\widetilde \u}
\newcommand{\revise}[1]{\textcolor{black}{#1}}

\DeclareMathOperator{\mean}{\mathcal M}
\DeclareMathOperator{\cov}{\mathcal C}

\begin{document}
\maketitle


\begin{abstract}
    The aim of this paper is to revisit propagation of chaos for a Langevin-type interacting particle system recently proposed as a method to sample probability measures.
    The interacting particle system we consider coincides,
    in the setting of a log-quadratic target distribution,
    with the ensemble Kalman sampler~\cite{MR4059375},
    for which propagation of chaos was first proved by Ding and Li in~\cite{MR4234152}.
    Like these authors,
    we prove propagation of chaos with an approach based on a synchronous coupling,
    as in Sznitman's classical argument~\cite{MR1108185}.
    Instead of relying on a boostrapping argument,
    however, we use a technique based on stopping times in order to handle the presence of the empirical covariance in the coefficients of the dynamics.
    The use of stopping times to handle the lack of global Lipschitz continuity in the coefficients of stochastic dynamics originates from numerical analysis~\cite{MR1949404}
    and was recently employed to prove mean field limits for consensus-based optimization and related interacting particle systems~\cite{MR4553241,gerber2023meanfield}.
    In the context of ensemble Langevin sampling,
    this technique enables proving pathwise propagation of chaos with optimal rate,
    whereas previous results were optimal only up to a positive $\varepsilon$.
\end{abstract}

\section{Introduction}
\label{sec:intro}

In a wide variety of applications,
ranging from Bayesian inference to statistical physics and computational biology,
it is necessary to produce samples from high-dimensional probability distributions of the form
\begin{equation}
    \label{eq:target_measure}
    \mu = \frac{\e^{-\phi}}{Z}, \qquad Z = \int_{\real^d} \e^{-\phi}.
\end{equation}
where $\phi\colon \real^d \rightarrow \real$ is a given function such that $\e^{-\phi}$ is integrable.
In~\cite{MR4059375},
the authors propose to simulate the following interacting particle system in order to generate approximate samples from~$\mu$:
\begin{equation}
    \label{eq:eks}
    \d \xn{j}_{\revise{t}} = - \cov(\emp_t) \nabla \phi(\xn{j}_{\revise{t}}) \, \d t + \sqrt{2 \cov(\emp_t)} \, \d \wn{j}_t,
    \qquad j \in \range{1}{J},
\end{equation}
where
$(\wn{j})_{j \in \range{1}{J}}$ are independent standard Brownian motions in $\real^d$,
$\emp_t = \frac{1}{J} \sum_{j=1}^{J} \delta_{\xn{j}_t}$ is the associated empirical measure,
and~$\cov(\emp_t)$ denotes the covariance under $\emp_t$;
see~\eqref{eq:cov_mean_def} below for the precise definition.
\revise{%
    If the covariance~$\cov(\emp_t)$ in~\eqref{eq:eks} was replaced by a constant symmetric positive definite matrix,
    then~\eqref{eq:eks} would describe a system of independent overdamped Langevin diffusions
    in an external potential~$\phi$,
    and the system would be suitable for sampling from~$\mu$.
    The idea of the method~\eqref{eq:eks} is to include the covariance as a form of preconditioner,
    in order to accelerate convergence to the invariant distribution.
    The authors of~\cite{MR4059375}
}
also present a gradient-free approximation of~\eqref{eq:eks}
that is well-suited to those Bayesian inverse problems
for which it is difficult or undesirable to calculate derivatives of the forward model,
examples of which are given in~\cite{MR4442433}.
Taking formally the limit $J \to \infty$ in~\eqref{eq:eks},
they conjecture that the mean field limit of the system is given by the following McKean stochastic differential equation
\begin{equation}
    \label{eq:mckean}
    \left\{
    \begin{aligned}
        \d\xl_t &= - \cov (\mfl_t) \nabla \phi(\xl_t) \, \d t + \sqrt{2 \cov (\mfl_t)}  \, \d W_t,\\
        \mfl_t &= \Law(\xl_t)\,.
    \end{aligned}
    \right.
\end{equation}
For background material on mean field limits and propagation of chaos,
we refer to~\cite{ReviewChaintronI,ReviewChaintronII}.
The mean field equation~\eqref{eq:mckean} is often simpler to analyze mathematically than the interacting particle system~\eqref{eq:eks}.
\revise{%
    It is simple to show, notably, that the target probability distribution~$\mu$ is an invariant measure of~\eqref{eq:mckean}.
}
In the setting where $\phi$ is quadratic,
it is \revise{%
    also possible to prove that the law $\mfl_t$ converges exponentially as~$t \to \infty$
    to the target distribution~$\mu$ in the Euclidean Wasserstein metric,
    provided that the covariance under $\mfl_0$ is nondegenerate.
    Furthermore, in this case the convergence rate is independent of the mean and covariance parameters of the target measure~$\mu$~\cite{MR4246458,MR4059375},
    a property related to the affine invariance of the dynamics~\cite{MR4123680}.
}%
Although the interacting particle system~\eqref{eq:eks} is always simulated with a finite number of particles in practice,
mathematical study of the mean field equation~\eqref{eq:mckean} \revise{enables to gain insight} into the behavior of the particle system for large $J$.
\revise{Indeed}, once a quantitative version of propagation of chaos has been established,
error estimates in the finite-ensemble setting can be derived from those at the mean field level via \revise{triangle inequalities}.
\revise{This motivates the study of propagation of chaos for~\eqref{eq:eks}}.

\revise{Before focusing on~\eqref{eq:eks}, let us mention that}, in~\cite{nusken2019note},
a correction of~\eqref{eq:eks} is proposed to ensure that the interacting particle system possesses as invariant distribution the tensorized measure $\mu^{\otimes J}$
for every value of $J \geq d+1$.
The properties of the resulting interacting particle system,
with acronym~ALDI,
are analyzed in~\cite{MR4123680}.

\paragraph{Summary of previous work}
\revise{
    The presence of the empirical covariance,
    a quadratic quantity,
    is an obstruction for proving propagation of chaos rigorously for~\eqref{eq:eks},
    because McKean's and Sznitman's classical arguments assume Lipschitz interactions.
    The first proof of propagation of chaos for~\eqref{eq:eks} was proposed in~\cite{MR4234152} by Ding and Li,
    who investigated the particular situation where~$\phi$ is quadratic.
}
In the setting of Bayesian inverse problems~\cite{MR2652785},
this situation arises when the forward model is linear,
observational noise is Gaussian and the prior distribution is Gaussian.
In order to show convergence,
in an appropriate sense,
of the interacting particle system~\eqref{eq:eks} to the mean field limit~\eqref{eq:mckean},
Ding and Li employ as a pivot the following
synchronously coupled system,
with the same initial condition and the same Brownian motions:
\begin{equation}
    \label{eq:synchronous_coupling}
    \left\{
    \begin{aligned}
        \d \xnl{j}_{\revise{t}} &= - \cov(\mfl_t) \nabla \phi(\xnl{j}_{\revise{t}}) \, \d t + \sqrt{2 \cov(\mfl_t)} \, \d \wn{j}_t,
        \qquad j \in \range{1}{J}, \\
        \mfl_t &= \operatorname{Law}\left(\xnl{j}_t\right).
    \end{aligned}
    \right.
\end{equation}
Using a novel bootstrapping argument,
which is summarized in~\cite[p.\ 142]{ReviewChaintronII},
they then prove that for every $\varepsilon > 0$,
there exists $C$ independent of $J$ such that
\begin{equation}
    \label{eq:chaos}
    \forall J \in \nat^+, \qquad
    \sup_{t \in [0, T]} \expect \left[ \bigl\lvert \xn{j}_t - \xnl{j}_t \bigr\rvert^2 \right]
    \leq C J^{-\frac{1}{2} + \varepsilon}.
\end{equation}
Combining this bound with Wasserstein convergence estimates for empirical measures formed from i.i.d.\ samples~\cite{MR3383341},
they then deduce a convergence estimate of the form
\begin{equation}
    \label{eq:empirical_chaos}
    \forall J \in \nat^+, \qquad
    \expect \Bigl[ W_2(\emp_T, \mfl_T) \Bigr] \leq C J^{-\alpha},
\end{equation}
for an appropriate $\alpha > 0$.
The derivation of~\eqref{eq:empirical_chaos} from~\eqref{eq:chaos} is straightforward and independent of the interacting particle system considered.
In the terminology of~\cite{ReviewChaintronI},
estimate~\eqref{eq:chaos} establishes pointwise infinite-dimensional Wasserstein-$2$ chaos,
whereas estimate~\eqref{eq:empirical_chaos} implies pointwise Wasserstein-$2$ empirical chaos.

\paragraph{Contributions of this work}
The aim of this note is to revisit propagation of chaos for the interacting particle system~\eqref{eq:eks}.
For simplicity, we consider neither the modification proposed in~\cite{nusken2019note,MR4123680} nor gradient-free approximations,
but note that the approach we present generalizes to the ALDI sampler from~\cite{MR4123680} in a straightforward manner.
Our contributions are the following:
\begin{itemize}
    \item
        We generalize the work of~\cite{MR4234152} by relaxing the assumption that~$\phi$ is quadratic.
        The assumptions made on~$\phi$ in our main results are similar to those in~\cite{MR4123680}.

    \item
        Whereas~\eqref{eq:chaos}, in the terminology of~\cite{ReviewChaintronII},
        may be viewed as a \emph{pointwise} estimate for the Wasserstein-2 norm,
        the result we prove is a general \emph{pathwise} estimate for the Wasserstein-$p$ norm.

    \item
        We \revise{deploy} an approach based on appropriate stopping times,
        which \revise{originates from numerical analysis~\cite{MR1949404}} but is novel in the context of mean field limits and may prove useful for the analysis of other interacting particle systems.
        This leads to an estimate which \revise{is} optimal,
        in the sense that~\eqref{eq:chaos} holds with $\varepsilon = 0$.

    \item
        Finally, we prove a novel well-posedness result for the mean field dynamics~\eqref{eq:synchronous_coupling},
        under a local Lipschitz continuity assumption on~$\nabla \phi$.
\end{itemize}
The rest of this work is organized as follows.
Well-posedness results for the interacting particle system and its formal mean field limit are presented in~\cref{section:wellposed}.
We then present auxiliary results in~\cref{sec:aux},
and the main results in~\cref{sec:main}.
The appendices contain the proofs of well-posedness results (\cref{sec:well-posed})
and auxiliary results~(\cref{sec:proof_of_aux}).

\paragraph{Notation}
We let $\N :=\{0,1,2,3,\dots\}$
and $\N^+ :=\{1,2,3,\dots\}$.
For a matrix $X \in \R^{d \times d}$,
the notation~$\norm{X}_{\rm F}$ refers to the Frobenius norm.
The set of probability measures over $\real^d$ is denoted by~$\mathcal P(\real^d)$,
and the notation $\mathcal P_{p}(\real^d) \subset \mathcal P(\real^d)$ refers to the set of probability measures with finite moments up to order~$p$.
For a probability measure~$\mu \in\mathcal{P}_2(\R^d)$
the following notation is used to denote the mean and covariance under~$\mu$:
\begin{align}
    \label{eq:cov_mean_def}
    \mathcal{M}(\mu) = \int_{\R^d} x \, \mu(\d x),
    \qquad
    \mathcal{C}(\mu) = \int_{\R^d} \bigl(x - \mathcal M(\mu)\bigr) \otimes \bigl(x - \mathcal M(\mu)\bigr)\, \mu(\d x).
\end{align}
The notation $W_p(\placeholder, \placeholder)$ denotes the $p$-Wasserstein distance, see~\cite[Chapter 6]{MR2459454}.
For a probability measure $\mu \in \mathcal P(\real^d)$ and a function $f\colon \real^d \to \R$,
we use the short-hand notation
\[
    \mu [f] = \int_{E} f(x) \, \mu(\d x).
\]
By a slight abuse of notation,
we sometimes write $\mu[f(x)]$ instead of $\mu[f]$ for convenience.
For example, for a probability measure~$\mu \in \mathcal P_2(\R)$,
the notation $\mu\left[  x^2 \right]$ refers to the second raw moment of~$\mu$.
Throughout this note,
the notation~$\Omega$ refers to the sample space,
and $C$ refers to a constant whose exact value is irrelevant in the context and may change from occurrence to occurrence.
Furthermore, in expressions such as $\expect \bigl\lvert f(X) \bigr\rvert^p$,
it is always assumed that the exponent is inside the expectation;
otherwise we write $\left( \expect \bigl[ f(X) \bigr] \right)^p$.
Finally, for convenience,
we let $|x|_* = \max\{1, |x|\}$, so that~$|x|_*^a \leq |x|_*^b$ for all $a \leq b$ and all $x \in \real$.

\section{Assumption and well-posedness}
\label{section:wellposed}

Throughout this note,
we denote by $\mathcal A(\ell)$ for $\ell \geq 0$ the set of negative log-densities~$\phi$ that satisfy the following assumptions,
with that value of $\ell$.

\begin{assumption}
    \label{assumption:main}
    Without loss of generality,
    we assume that the function $\phi \colon \real^d \to \real$ is bounded from below by 1.
    We assume furthermore that $\phi \in \cont^2(\real^d)$ and
    that there are positive constants $\l,\u$ and a compact set $K$ such that
    the following inequalities are satisfied for all $x \in \real^d \setminus K$:
    \begin{subequations}
        \begin{align}
            \label{assump:function_bounds}
            \l |x|^{\ell+2} &\leq \phi(x) \le \u |x|^{\ell+2}, \\
            \label{assump:grad_bounds}
            \l |x|^{\ell+1} &\leq \lvert \nabla \phi(x) \rvert \le \u |x|^{\ell+1}, \\
            \label{assump:hessian_bounds}
            \l |x|^{\ell} \I_d &\preccurlyeq \hessian \phi(x) \preccurlyeq \u \I_d |x|^{\ell}.
        \end{align}
    \end{subequations}
\end{assumption}
\begin{remark}
    \label{remark:first}
    A few comments are in order.
    \begin{itemize}
        \item
            \Cref{assumption:main} implies that there are constants $\widetilde \l, \widetilde \u$
            such that
            \begin{equation}
                \label{eq:assumption_allx}
                \forall x \in \real^d, \qquad
                \begin{aligned}
                    \widetilde \l |x|_*^{\ell + 2} \leq \phi(x) &\leq \widetilde \u |x|_*^{\ell +2}, \qquad
                    \bigl\lvert \nabla \phi(x) \bigr\rvert &\le \widetilde \u |x|_*^{\ell + 1}, \qquad
                     \revise{%
                        \hessian \phi(x) \preccurlyeq \widetilde \u \I_d |x|_*^{\ell}.
                    }
                \end{aligned}
            \end{equation}
            Note that these inequalities are valid for all $x \in \real^d$, without a compact set excluded.
            We shall henceforth assume that these inequalities are satisfied with the same constants as in~\cref{assumption:main}.
        \item
            The last bound in~\eqref{eq:assumption_allx},
            together with the assumption that $\phi \in \cont^2(\real^d)$,
            implies that $\nabla \phi$ is locally Lipschitz continuous.
            \revise{%
                Specifically, it holds for some $z$ on the straight segment joining $x$ and $y$ that
                \begin{align}
                    \label{eq:assump-f:lip-growth-gradient}
                    \forall x, y \in \R^d, \qquad
                    \bigl\lvert \nabla\phi(x)- \nabla\phi(y) \bigr\rvert
                    \leq \norm{\hessian \phi(z)}\bigl\lvert x - y \bigr\rvert
                    \le \lipphi \left( 1 + |x|^{\ell} + |y|^{\ell} \right) |x-y|.
                \end{align}
            }

        \item
            When the target distribution~\eqref{eq:target_measure} is the Bayesian posterior associated to an inverse problem with a linear forward model, Gaussian noise and a Gaussian prior,
            the function $\phi$ is quadratic.
            In this case \cref{assumption:main} is indeed satisfied with $\ell = 0$.

        \item
            \revise{
                The assumption that $\phi \geq 1$ is made for convenience,
                but it is not required.
                This assumption is particularly useful for the proofs in~\cref{sec:well-posed},
                where it enables to use short Lyapunov functions of the form $x \mapsto \phi(x)^q$ for arbitrary $q > 0$,
                instead of more cumbersome expressions such as $(\phi - \min \phi + 1)^q$.
            }
    \end{itemize}
\end{remark}

Before presenting well-posedness results,
we describe the mathematical setting more precisely than in~\cref{sec:intro}.
Given independent standard Brownian motions $\bigl(\wn{j}\bigr)_{j\in\N^+}$ in~$\real^d$
and independent random variables $\bigl(\xn{j}_0\bigr)_{j\in\N^+}$ sampled from some $\mfl_0\in\mathcal P(\R^d)$,
we consider for each $J \in \nat^+$ the \revise{interacting} particle system
\begin{equation}
    \label{eq:system_rewritten}
    \xn{j}_t = \xn{j}_0 + \int_0^t b\left(\xn{j}_s, \emp_s\right) \, \d s +  \int_0^t \sigma\left(\xn{j}_s, \emp_s\right) \d \wn{j}_s,
    \qquad j\in \range{1}{J},
\end{equation}
where the drift $b\colon \real^d \times \mathcal P(\real^d) \to \real^d$
and diffusion $\sigma \colon \real^d \times \mathcal P(\real^d) \to \real^{d\times d}$
are given by
\begin{equation}
    \label{eq:eks-drift-and-diff}
    b(x, \mu) \coloneq - \cov(\mu) \nabla \phi(x),
    \qquad
    \sigma(x, \mu) \coloneq \sqrt{2\cov(\mu)}.
\end{equation}
Following the classical synchronous coupling approach pioneered by Sznitman~\cite{MR1108185},
we couple to~\eqref{eq:system_rewritten} the following system of i.i.d.\ mean-field McKean-Vlasov diffusions
\begin{align}
    \label{eq:coupling-system-mfl}
    \forall j\in \range{1}{J}, \qquad
    \left\{
    \begin{aligned}
        \xnl{j}_t &= \xn{j}_0 + \int_0^t b\left(\xnl{j}_s, \mfl_s\right) \, \d s +  \int_0^t \sigma\left(\xnl{j}_s, \mfl_s\right) \, \d \wn{j}_s,\\
        \mfl_t& = \operatorname{Law}\bigl(\xnl{j}_t\bigr),
    \end{aligned}
    \right.
\end{align}
driven by the same Brownian motions and with the same initial conditions as \eqref{eq:system_rewritten}.
The well-posedness of~\eqref{eq:system_rewritten} and~\eqref{eq:coupling-system-mfl}
follows from~\cref{proposition:moment_estimates} and~\cref{proposition:well-posedness} respectively,
stated hereafter and proved in~\cref{sec:well-posed}.

\begin{proposition}
    [Well-posedness for the interacting particle system]
    \label{proposition:moment_estimates}
    Assume that $\phi \in \mathcal A(\ell)$ for some $\ell \geq 0$ and that~$\mfl_0 \in \mathcal P_p(\real^d)$ for some $p \geq 2$.
    Then for any $J \in \nat^+$,
    the system of stochastic differential equations~\eqref{eq:system_rewritten}
    has a unique globally defined strong solution
    that is almost surely continuous.
    Furthermore, \revise{if $p \geq 4$} then for all $T > 0$ there is~$\kappa = \kappa(p) > 0$ independent of~$J$ such that
    \begin{align}
        \label{eq:moment_bounds_2}
        &\expect \left[\sup_{t \in [0, T]} \bigl\lvert \xn{j}_t \bigr\rvert^p \right]
        \leq \kappa.
    \end{align}
\end{proposition}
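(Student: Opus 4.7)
The plan is to first establish local strong existence and uniqueness for~\eqref{eq:system_rewritten}, then derive uniform-in-$J$ a priori moment estimates in order to preclude explosion, and finally upgrade these pointwise bounds to the supremum-moment estimate via the Burkholder--Davis--Gundy inequality.

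Viewing~\eqref{eq:system_rewritten} as a stochastic differential equation on~$\real^{Jd}$ driven by $(\wn{1}, \dots, \wn{J})$, the drift $(\xn{1}, \dots, \xn{J}) \mapsto (-\cov(\emp)\nabla\phi(\xn{j}))_j$ is locally Lipschitz, since $\cov(\emp)$ depends polynomially on the particle positions and $\nabla\phi$ is locally Lipschitz by~\eqref{eq:assump-f:lip-growth-gradient}. The diffusion $\sqrt{2\cov(\emp)}$ involves the matrix square root, which is only H\"older continuous with exponent $1/2$ on the cone of positive semi-definite matrices; to bypass this, I would rewrite the noise as $\sqrt{2/J}\,E_t\,\d\widetilde{B}^j_t$ with $E_t \in \real^{d\times J}$ having columns $\xn{k}_t - \mean(\emp_t)$ and $\widetilde{B}^j$ an auxiliary $J$-dimensional Brownian motion producing the same quadratic covariation. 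The resulting SDE has locally Lipschitz coefficients and the same law, giving a local strong solution on some interval~$[0, \tau_\infty)$.

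The core step is the uniform-in-$J$ moment bound. I would apply It\^o's formula to the Lyapunov function $V(x) = \phi(x)^q$ with $q := p/(\ell+2)$ and sum over $j \in \range{1}{J}$. The drift of~\eqref{eq:system_rewritten} yields the non-positive dissipation $-q\phi(\xn{j})^{q-1}\,\nabla\phi(\xn{j})\cdot\cov(\emp)\,\nabla\phi(\xn{j}) \leq 0$, which may be discarded. The It\^o correction $\trace\bigl(\cov(\emp)\,\hessian V(\xn{j})\bigr)$ splits into a contribution proportional to $\phi^{q-2}\,\nabla\phi\cdot\cov(\emp)\,\nabla\phi$ and one proportional to $\phi^{q-1}\,\trace(\cov(\emp)\,\hessian\phi)$; using the upper bounds in~\eqref{eq:assumption_allx} together with the equivalence $|x|_*^{\ell+2} \asymp \phi(x)$, both are bounded by a multiple of $\phi(\xn{j})^{q - 2/(\ell+2)}\,\trace(\cov(\emp))$. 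Writing $\trace(\cov(\emp)) \leq (C/J) \sum_k \phi(\xn{k})^{2/(\ell+2)}$ and applying H\"older's inequality with conjugate exponents $q(\ell+2)/(q(\ell+2)-2)$ and $q(\ell+2)/2$, the product $\sum_j \phi(\xn{j})^{q-2/(\ell+2)} \cdot \sum_k \phi(\xn{k})^{2/(\ell+2)}$ is bounded by $J \sum_j \phi(\xn{j})^q$, yielding
\begin{equation*}
    \d \sum_j \phi(\xn{j}_t)^q \leq C \sum_j \phi(\xn{j}_t)^q \, \d t + \d M_t,
\end{equation*}
where $M$ is a local martingale. Gronwall's lemma on the expectation then gives $\sup_{t \leq T} \expect\bigl[\phi(\xn{j}_t)^q\bigr] \leq C_T$ uniformly in~$J$, and since $|x|^p \leq C\phi(x)^q$, one concludes $\sup_{t \leq T}\expect\bigl[|\xn{j}_t|^p\bigr] \leq C'_T$. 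A standard stopping-time argument extends the local strong solution to a global one.

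For the supremum-moment bound when $p \geq 4$, I would apply It\^o's formula to $\phi(\xn{j})^q$ for a single $j$ and invoke the Burkholder--Davis--Gundy inequality for the martingale contribution. The drift integral is controlled by the H\"older argument of the previous step. The quadratic variation is bounded by $C\int_0^T \phi(\xn{j}_s)^{2q-2/(\ell+2)}\,\trace(\cov(\emp_s))\,\d s$, and its expectation is handled via Cauchy--Schwarz, combining the a priori bound on $\phi(\xn{j})^q$ with the estimate $\expect\bigl[\trace(\cov(\emp))^r\bigr] \leq \expect\bigl[|\xn{1}|^{2r}\bigr]$ obtained from exchangeability and Jensen's inequality; the condition $p \geq 4$ secures the second-moment control on $\trace(\cov(\emp))$ that closes the bound uniformly in~$J$. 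The principal obstacle throughout is keeping all estimates independent of~$J$, which hinges on the H\"older-exponent balancing above, exploiting the cancellation between the $1/J$ prefactor in $\trace(\cov(\emp))$ and the sum over the $J$ particles; the non-Lipschitz matrix square root is a secondary difficulty that is bypassed by the noise rewriting used for local existence.
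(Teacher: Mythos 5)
Your local-existence reduction (rewriting the noise with the factor matrix of columns $\xn{k}-\mean(\emp)$ so that the diffusion coefficient is polynomial, hence locally Lipschitz, rather than the merely H\"older-$\tfrac12$ matrix square root) is a sensible way to make the Khasminskii-type argument rigorous, and your Step 2 — the Lyapunov function $\phi^q$ with $q=p/(\ell+2)$, discarding the nonpositive dissipation, bounding the It\^o correction via~\eqref{eq:aux_hessian}, and the H\"older balancing of $\sum_j \phi(\xn{j})^{q-2/(\ell+2)}$ against $\sum_k\phi(\xn{k})^{2/(\ell+2)}$ to get $J\sum_j\phi(\xn{j})^q$ — matches the paper's Step~1 in~\cref{sub:proof_moment_bounds} essentially line for line (the paper works with $|x|_*$ and Jensen, you with $\phi$ and H\"older, which is the same thing).

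Your Step 3, however, does not close. You keep $q=p/(\ell+2)$, apply the $L^1$ Burkholder--Davis--Gundy inequality $\expect[\sup_t|M_t|]\lesssim\expect[\langle M\rangle_T^{1/2}]$, and then propose to handle $\expect\bigl[\phi(\xn{j}_s)^{2q-2/(\ell+2)}\trace(\cov(\emp_s))\bigr]$ by Cauchy--Schwarz, pairing the $\phi$-power with the two-th moment of $\trace(\cov(\emp))$. But this leaves $\expect\bigl[\phi^{4q-4/(\ell+2)}\bigr]\asymp\expect\bigl[|x|^{4q(\ell+2)-4}\bigr]=\expect\bigl[|x|^{4p-4}\bigr]$ on the other side, and $4p-4>p$ whenever $p>4/3$, so under the hypothesis $\mfl_0\in\mathcal P_p$ you do not have this moment. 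The paper avoids this obstruction by halving the exponent, taking $q=p/(2(\ell+2))$, squaring $f(\xn{j}_t)=\phi(\xn{j}_t)^q$ before taking the supremum, and invoking the $L^2$ form $\expect[\sup_t|M_t|^2]\lesssim\expect[\langle M\rangle_T]$; the quadratic variation term then reads $\expect\bigl[\|\cov(\emp_s)\|\,|\xn{j}_s|_*^{2q(\ell+2)-2}\bigr]$ and is absorbed by the same single H\"older step you used for the drift, yielding $C\expect|\xn{j}_s|_*^{2q(\ell+2)}=C\expect|\xn{j}_s|_*^{p}$ and closing the Gr\"onwall loop (the condition $p\geq4$ is exactly what makes that H\"older exponent $q(\ell+2)/2=p/4$ admissible). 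Alternatively, you could repair your own approach by pulling $\bigl(\sup_t\phi(\xn{j}_t)^q\bigr)^{1/2}$ out of $\langle M\rangle_T^{1/2}$, applying Young's inequality $ab\leq\varepsilon a^2+C_\varepsilon b^2$, and absorbing the resulting $\varepsilon\,\expect[\sup_t\phi(\xn{j}_t)^q]$ into the left-hand side — but the bare Cauchy--Schwarz as you state it fails.
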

\begin{proof}
    The proof of well-posedness is essentially that of~\cite[Proposition 4.4]{MR4123680}.
    The moment estimate~\eqref{eq:moment_bounds_2} generalizes those from~\cite{MR4234152}.
    See~\cref{sec:well-posed} for details.
\end{proof}

\begin{proposition}
    [Well-posedness for the mean field dynamics]
    \label{proposition:well-posedness}
    Suppose that $\phi \in \mathcal A(\ell)$ \revise{for some~$\ell \geq 0$},
    that $\mfl_0\in\mathcal{P}_{p}(\R^d)$ \revise{for some~$p \geq 2 (\ell + 2)$},
    and that $\cov(\mfl_0) \succ 0$.
    Fix~$x_0 \sim \mfl_0$ and $T>0$.
    Then, there exists a unique strong solution~$\xl \in \cont([0,T], \R^d)$ to~\eqref{eq:mckean}
    such that~$\xl_0 = x_0$ and $t \mapsto \cov(\mfl_t)$ is continuous in $[0, T]$.
    Furthermore, the function $t \mapsto \cov(\mfl_t)$ is differentiable
    and there is~$\overline \kappa = \overline \kappa(p) > 0$ such that
    \begin{equation}
        \label{eq:moment_bound_mfl}
        \expect \left[ \sup_{t\in[0,T]}  \abs{\xl_{t}}^p \right] \leq \overline \kappa,
        \qquad \sup_{t \in [0, T]} \bigl\lVert \cov(\mfl_t)^{-1} \bigr\rVert_{\rm F} \leq \overline \kappa,
        \qquad \sup_{t \in [0, T]} \left\lVert \frac{\d \cov(\mfl_t)}{\d t} \right\rVert_{\rm F} \leq \overline \kappa,
    \end{equation}
\end{proposition}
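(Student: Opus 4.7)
The plan is to carry out a fixed-point argument on the trajectory of the covariance matrix itself. Specifically, I seek $\Sigma \in \cont([0, T], \R^{d \times d})$, valued in symmetric positive-definite matrices, satisfying $\Sigma(t) = \cov(\mathrm{Law}(Y_t^{\Sigma}))$, where $Y^\Sigma$ solves the time-inhomogeneous SDE
\begin{equation*}
    \d Y_t = -\Sigma(t)\nabla \phi(Y_t) \, \d t + \sqrt{2\Sigma(t)} \, \d W_t, \qquad Y_0 = x_0.
\end{equation*}
For any fixed $\Sigma$ with $\lambda \I_d \preccurlyeq \Sigma(t) \preccurlyeq \Lambda \I_d$, estimate~\eqref{eq:assump-f:lip-growth-gradient} makes the coefficients locally Lipschitz in $x$ with polynomial growth, so standard theory yields a unique strong local solution. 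Applying Itô to the Lyapunov function $V_q(x) = \phi(x)^q$ (justified for arbitrary $q > 0$ by $\phi \geq 1$ and the bounds in \cref{assumption:main}), one checks that for $q$ small enough relative to $\Lambda/\lambda$ the drift of $V_q(Y_t)$ is bounded above by $-\alpha V_q(Y_t) + C$. This both rules out explosion and produces moment bounds on $Y_t$ that are uniform in $\Sigma$ and depend only on $\lambda,\Lambda,T$ and the moments of $\mfl_0$.

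Next, let $\mathcal{K} \subset \cont([0, T_0], \R^{d \times d})$ be the convex subset of symmetric positive-definite paths satisfying $\frac{1}{2}\cov(\mfl_0) \preccurlyeq \Sigma(t) \preccurlyeq 2 \cov(\mfl_0) + \I_d$ together with a uniform H\"older-$\frac{1}{2}$ modulus; this set is compact by Arzel\`a--Ascoli. Using Itô on $x \otimes x$ and the uniform moment bounds above, one verifies that for $T_0$ small enough the map $\mathcal{T} \colon \Sigma \mapsto \bigl(t \mapsto \cov(\mathrm{Law}(Y_t^\Sigma))\bigr)$ sends $\mathcal{K}$ into itself and is continuous in the sup norm. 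Schauder's fixed-point theorem then yields a solution on $[0, T_0]$. Pathwise uniqueness on $[0, T_0]$ follows from a Grönwall estimate on $\expect\bigl|X_t^{(1)} - X_t^{(2)}\bigr|^2$ between any two solutions, the nonlocal increments $\cov(\mfl_t^{(1)}) - \cov(\mfl_t^{(2)})$ being controlled by the same quantity via Cauchy--Schwarz and the moment bounds.

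Applying Itô to $x \otimes x$ and taking expectations produces the closed matrix ODE
\begin{equation*}
    \frac{\d \cov(\mfl_t)}{\d t} = 2 \cov(\mfl_t) - B_t \cov(\mfl_t) - \cov(\mfl_t) B_t^\t, \qquad B_t \coloneq \expect\Bigl[\bigl(\xl_t - \mean(\mfl_t)\bigr) \otimes \bigl(\nabla\phi(\xl_t) - \mfl_t[\nabla\phi]\bigr)\Bigr].
\end{equation*}
By \eqref{eq:assumption_allx}, Cauchy--Schwarz, and the hypothesis $p \geq 2(\ell+2)$, $\|B_t\|_{\rm F}$ is bounded uniformly in $t$, which gives both differentiability of $t \mapsto \cov(\mfl_t)$ and the claimed bound on its derivative. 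Differentiating $\cov(\mfl_t)\cov(\mfl_t)^{-1} = \I_d$ yields $\tfrac{\d}{\d t}\cov(\mfl_t)^{-1} = \cov(\mfl_t)^{-1} B_t + B_t^\t \cov(\mfl_t)^{-1} - 2\cov(\mfl_t)^{-1}$, and Grönwall's lemma produces an a priori bound on $\|\cov(\mfl_t)^{-1}\|_{\rm F}$ depending only on $\|\cov(\mfl_0)^{-1}\|_{\rm F}$, $\sup_s \|B_s\|_{\rm F}$ and $T$. This bound, holding on any interval where the solution exists, lets one iterate the short-time construction up to an arbitrary $T$.

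The hard part will be dovetailing these three ingredients: the Schauder step requires $\Sigma$ to stay uniformly non-degenerate, the a priori lower bound on $\cov(\mfl_t)$ from the ODE step requires the moment bounds, and the moment bounds require $\Sigma$ to be uniformly bounded above \emph{and} below. The moment hypothesis $p \geq 2(\ell+2)$ is precisely what is needed to make $\|B_t\|_{\rm F}$ finite via \eqref{eq:assumption_allx} and Cauchy--Schwarz, so that the derivative bound and the lower bound on $\cov(\mfl_t)$ can be propagated up to time $T$.
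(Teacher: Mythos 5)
Your existence argument is sound but routes differently from the paper. You apply Schauder's theorem directly on a compact convex set of Hölder-continuous positive-definite paths, obtain a short-time solution, and then iterate using a priori bounds to reach time $T$. The paper instead applies Leray--Schauder on all of $\cont([0,T],\R^{d\times d})$, with the trick of defining the map through $\mathcal R(\Gamma)=\sqrt{\Gamma\Gamma^\t}$ so that no positive-definiteness constraint needs to be imposed on the iterates; this avoids the short-time/iteration bookkeeping at the cost of having to verify boundedness of the set $\{\Gamma : \Gamma=\xi\mathcal T(\Gamma)\ \text{for some}\ \xi\in[0,1]\}$. Two smaller remarks on your existence argument: (i) your claim that the Lyapunov moment bounds require $\Sigma$ to be bounded \emph{below} is unnecessary — the paper's Lemma B.1 just drops the negative drift contribution $-q\phi^{q-1}\nabla\phi^\t\mathcal R(\Gamma)\nabla\phi$ and uses a Gr\"onwall bound that only needs $\|\Gamma\|_{\cont([0,T],\mathcal X)}<\infty$, so the ``dovetailing'' is less circular than you describe; and (ii) the paper bounds $\lambda_{\min}(\cov(\mfl_t))$ from below via the Lyapunov function $\mu\mapsto-\log\det\cov(\mu)$ rather than your matrix ODE for $\cov(\mfl_t)^{-1}$; both are valid, though the log-determinant route produces cleaner trace cancellations.

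The genuine gap is in your uniqueness argument. You propose a Gr\"onwall estimate on $\expect\bigl|X_t^{(1)}-X_t^{(2)}\bigr|^2$, noting only that the nonlocal increments $\cov(\mfl_t^{(1)})-\cov(\mfl_t^{(2)})$ are controllable. But for $\ell>0$ the \emph{local} part of the drift difference, $\cov(\mfl_t^{(1)})\bigl(\nabla\phi(X^{(1)}_t)-\nabla\phi(X^{(2)}_t)\bigr)$, is not Lipschitz in $X$: by \eqref{eq:assump-f:lip-growth-gradient} you only get $|\nabla\phi(x)-\nabla\phi(y)|\le\lipphi(1+|x|^\ell+|y|^\ell)|x-y|$. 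If you try to close the Gr\"onwall loop on $\expect|X^{(1)}-X^{(2)}|^2$, the cross term $\expect\bigl[(1+|X^{(1)}|^{2\ell}+|X^{(2)}|^{2\ell})|X^{(1)}-X^{(2)}|^2\bigr]$ does not reduce to $C\,\expect|X^{(1)}-X^{(2)}|^2$ — Cauchy--Schwarz sends you to $\expect|X^{(1)}-X^{(2)}|^4$ and the argument diverges. The paper (Step 4b of \cref{sub:proof_wellposed}) resolves this for $\ell>0$ by applying It\^o to the weighted quadratic form $f(y,\widehat y,t)=\tfrac12(y-\widehat y)^\t\Gamma_t^{-1}(y-\widehat y)$ and invoking the one-sided convexity inequality of \cref{lemma:convexity}, which turns $-\langle Y-\widehat Y,\nabla\phi(Y)-\nabla\phi(\widehat Y)\rangle$ into a term bounded above by $c_2|Y-\widehat Y|^2$ without ever estimating $|\nabla\phi(Y)-\nabla\phi(\widehat Y)|$ in norm. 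Your Gr\"onwall argument is essentially the paper's Step 4a, which is explicitly restricted to $\ell=0$; for $\ell>0$ you need something like the paper's Step 4b.
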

\begin{proof}
    The proof uses a classical fixed-point approach,
    similar to that used in~\cite{carrillo2018analytical}.
    See~\cref{sec:well-posed} for details.
\end{proof}

\section{Auxiliary results}
\label{sec:aux}

The proof of the main results presented in~\cref{sec:main} relies on the \revise{moment bounds~\eqref{eq:moment_bounds_2} and~\eqref{eq:moment_bound_mfl}},
as well as the following auxiliary lemmas.

\begin{lemma}
    [Bound on the probability of large excursions]
    \label{lemma:small_set}
    Let $(Z_j)_{j\in\N^+}$ be a family of i.i.d.\ $\real$-valued random variables such that $\expect  \bigl[\abs*{Z_1}^{r}\bigr]  < \infty$
    \revise{for some $r \geq 2$}.
    Then for all $R > \expect \bigl[\abs*{Z_1}\bigr]$,
    there exists a constant $C>0$ such that
    \[
        \forall J \in \N^+, \qquad
        \proba \left[  \frac{1}{J} \sum_{j=1}^{J} Z_j \geq R \right]
        \leq C J^{-\frac{r}{2}}.
    \]
\end{lemma}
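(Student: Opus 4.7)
The plan is to reduce the tail probability to an $r$-th moment estimate via Markov's inequality and then to invoke a classical moment bound for sums of i.i.d.\ mean-zero random variables.

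I would first recentre: set $\mu := \expect[Z_1]$ and $\delta := R - \mu$. By Jensen's inequality, $\abs{\mu} \leq \expect[\abs{Z_1}]$, so the hypothesis $R > \expect[\abs{Z_1}]$ gives $\delta > 0$; this is precisely why one needs $R > \expect[\abs{Z_1}]$ rather than merely $R > \expect[Z_1]$. Markov's inequality at order $r$ applied to the recentred sum then yields
\[
    \proba\left[\frac{1}{J}\sum_{j=1}^J Z_j \geq R\right]
    = \proba\left[\sum_{j=1}^J (Z_j - \mu) \geq J\delta\right]
    \leq \frac{1}{(J\delta)^r}\, \expect\left[\left\lvert\sum_{j=1}^J (Z_j - \mu)\right\rvert^r\right].
\]

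The second ingredient is the Marcinkiewicz--Zygmund inequality (equivalently, a form of Rosenthal's inequality), applied to the i.i.d., mean-zero random variables $X_j := Z_j - \mu$, which satisfy $\expect[\abs{X_1}^r] \leq 2^{r-1}\bigl(\expect[\abs{Z_1}^r] + \abs{\mu}^r\bigr) < \infty$. For $r \geq 2$, this inequality furnishes a constant $C_r$ such that
\[
    \expect\left[\left\lvert\sum_{j=1}^J X_j\right\rvert^r\right] \leq C_r\, J^{r/2}\, \expect[\abs{X_1}^r],
\]
and substituting into the previous display produces the claimed bound $C J^{-r/2}$ with $C := C_r\, \expect[\abs{X_1}^r]\, \delta^{-r}$.

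No step here is a genuine obstacle: the argument is a short concatenation of classical tools, and the hypothesis $R > \expect[\abs{Z_1}]$ enters solely to guarantee the strict positivity of $\delta$ after centering. Should one prefer not to cite Marcinkiewicz--Zygmund directly, the same $J^{r/2}$ scaling can be recovered by applying the Burkholder--Davis--Gundy inequality to the discrete martingale $S_n := \sum_{j \leq n} X_j$, whose quadratic variation equals $J\, \expect[X_1^2]$ and thus delivers an $r$-th moment bound of the required order.
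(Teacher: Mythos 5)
Your argument is correct and coincides with the paper's own proof: recenter, apply Markov's inequality at order $r$, and then invoke the Marcinkiewicz--Zygmund inequality (with Jensen to pass from $(\sum X_j^2)^{r/2}$ to $J^{r/2-1}\sum |X_j|^r$) to get the $J^{r/2}$ scaling of the $r$-th moment of the recentred sum. The only slip is in your parenthetical BDG aside, where the quadratic variation of the discrete martingale $S_n=\sum_{j\le n}X_j$ is $\sum_{j\le n}X_j^2$, not $J\,\expect[X_1^2]$ (that is the predictable bracket); but this side remark is not used in the argument, which is otherwise sound.
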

\begin{proof}
    This follows from a generalization of Chebychev's inequality~\cite[Exercise 3.21]{MR3674428}.
    Let
    \[
        X = \frac{1}{J} \sum_{j=1}^{J} Z_j.
    \]
    By the Marcinkiewicz--Zygmund \revise{and Jensen} inequalities,
    it holds that
    $\expect \bigl\lvert X - \expect [X] \bigr\rvert^{r} \leq C_{\rm MZ}(r) J^{-\frac{r}{2}} \expect \bigl\lvert Z_1 - \expect [Z_1] \bigr\rvert^r $.
    Therefore,
    using the Markov inequality,
    we deduce that
    \begin{align*}
        \proba \left[ X  \ge R \right]
        \leq \proba \Bigl[ \bigl\lvert X - \expect [X]  \bigr\rvert^r  \ge \bigl(R - \expect [X]\bigr)^r \Bigr]
        &\leq \expect \left[ \frac{\bigl\lvert X - \expect[X] \bigr\rvert^r}
        {\bigl(R - \expect [X]\bigr)^r} \right]
        \leq \frac{C J^{-\frac{r}{2}}}{\bigl(R - \expect [X]\bigr)^r} ,
    \end{align*}
    which concludes the proof.
\end{proof}

\begin{lemma}
    [Wasserstein stability estimates]
    \label{lemma:wasserstein_stability_estimates}
    For all $(\mu, \nu) \in \mathcal P_{2}\bigl(\real^d\bigr) \times \mathcal P_{2} \bigl(\real^d\bigr)$,
    it holds that
    \begin{subequations}
        \begin{align}
            \label{eq:stab_wcov_simple}
            \Bigl\lVert \cov(\mu) - \cov(\nu) \Bigr\rVert_{\rm F}
            &\le 2\Bigl( W_2(\mu, \delta_0) + W_2(\nu, \delta_0) \Bigr) W_2(\mu, \nu), \\
            \label{eq:wmean-wcov-emp-local-lip}
            \qquad
            \norm{
            \sqrt{\cov(\mu)} - \sqrt{ \cov (\nu) }}_{\rm F}
            &\le \sqrt{2} \, W_2(\mu, \nu).
        \end{align}
    \end{subequations}
\end{lemma}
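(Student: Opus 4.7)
The plan is to prove both estimates via an optimal coupling of $\mu$ and $\nu$.

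For the first estimate~\eqref{eq:stab_wcov_simple}, I would fix an optimal coupling $(X, Y)$ with $X \sim \mu$, $Y \sim \nu$ and $\expect\bigl[|X-Y|^2\bigr] = W_2(\mu, \nu)^2$, and introduce the centered random variables $\tilde X = X - \mean(\mu)$ and $\tilde Y = Y - \mean(\nu)$. The starting point is the algebraic identity
\[
    \tilde X \tilde X^{\t} - \tilde Y \tilde Y^{\t} = (\tilde X - \tilde Y)\tilde X^{\t} + \tilde Y (\tilde X - \tilde Y)^{\t},
\]
whose expectation equals $\cov(\mu) - \cov(\nu)$. Applying the triangle inequality in Frobenius norm, using $\|ab^{\t}\|_{\rm F} = |a|\,|b|$, and then Cauchy--Schwarz separately to each term would yield
\[
    \bigl\|\cov(\mu) - \cov(\nu)\bigr\|_{\rm F} \le \bigl(\expect|\tilde X - \tilde Y|^2\bigr)^{1/2}\Bigl(\bigl(\expect|\tilde X|^2\bigr)^{1/2} + \bigl(\expect|\tilde Y|^2\bigr)^{1/2}\Bigr).
\]
The conclusion then follows from $\expect|\tilde X - \tilde Y|^2 \le \expect|X - Y|^2 = W_2(\mu, \nu)^2$ (centering only reduces the variance of the difference), together with $\expect|\tilde X|^2 = \trace\cov(\mu) \le \expect|X|^2 = W_2(\mu, \delta_0)^2$ and its analogue for $\tilde Y$.

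For the second estimate~\eqref{eq:wmean-wcov-emp-local-lip}, the plan is to combine the Gelbrich lower bound on the Wasserstein distance with a purely matrix-theoretic inequality. Given any coupling of $\mu$ and $\nu$ with cross-covariance $K = \expect\bigl[\tilde X \tilde Y^{\t}\bigr]$, positive semi-definiteness of the joint covariance matrix implies that $K = \cov(\mu)^{1/2} Q \cov(\nu)^{1/2}$ for some $Q$ with $\|Q\|_{\rm op} \le 1$, and von Neumann's trace inequality bounds $\trace K$ from above by $\trace\bigl((\cov(\mu)^{1/2} \cov(\nu) \cov(\mu)^{1/2})^{1/2}\bigr)$. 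Inserting this into the expansion $\expect|X - Y|^2 = |\mean(\mu) - \mean(\nu)|^2 + \trace\cov(\mu) + \trace\cov(\nu) - 2\trace K$ would yield the Gelbrich bound
\[
    W_2(\mu, \nu)^2 \ge \bigl|\mean(\mu) - \mean(\nu)\bigr|^2 + d_{\rm B}^2\bigl(\cov(\mu), \cov(\nu)\bigr),
\]
where $d_{\rm B}^2(A, B) \coloneq \trace(A + B) - 2\trace\bigl((A^{1/2} B A^{1/2})^{1/2}\bigr)$ denotes the squared Bures distance. The final step would then be to verify the purely matrix-theoretic inequality $\|A^{1/2} - B^{1/2}\|_{\rm F}^2 \le 2\, d_{\rm B}^2(A, B)$ for all symmetric positive semi-definite $A, B$, which upon expansion reduces to a trace inequality for products of positive semi-definite matrices.

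The main obstacle I anticipate is this matrix inequality: the factor $\sqrt 2$ reflects the bounded suboptimality of the natural coupling $(A^{1/2} G, B^{1/2} G)$ of the centered Gaussians $\mathcal{N}(0, A)$ and $\mathcal{N}(0, B)$ compared with the Brenier-optimal coupling, and its verification requires care because $A^{1/2}$ and $B^{1/2}$ do not commute in general.
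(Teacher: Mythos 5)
For the first estimate~\eqref{eq:stab_wcov_simple}, your argument is correct and is a genuinely different route from the paper's. The paper splits $\cov(\mu)-\cov(\nu)$ by the triangle inequality into the difference of raw second-moment matrices and the difference of mean outer products, and bounds each of the two pieces by $\bigl(W_2(\mu,\delta_0)+W_2(\nu,\delta_0)\bigr)W_2(\mu,\nu)$, which is where the prefactor $2$ comes from. You instead work with the centered coupled pair $(\tilde X,\tilde Y)$ and the rank-one decomposition of $\tilde X\tilde X^{\t}-\tilde Y\tilde Y^{\t}$; since centering can only decrease $\expect\bigl\lvert\tilde X-\tilde Y\bigr\rvert^2$ and $\expect\lvert\tilde X\rvert^2$, your argument actually yields the stability estimate with constant $1$ instead of $2$. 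That is a slicker and marginally sharper proof (the improvement is harmless but unused downstream).

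For the second estimate~\eqref{eq:wmean-wcov-emp-local-lip}, however, there is a genuine gap exactly where you flag it. Your reduction to the Gelbrich lower bound $W_2(\mu,\nu)^2 \ge \lvert\mean(\mu)-\mean(\nu)\rvert^2 + d_{\rm B}^2\bigl(\cov(\mu),\cov(\nu)\bigr)$ is sound, and it cleanly isolates a purely matrix-theoretic target, namely $\lVert A^{1/2}-B^{1/2}\rVert_{\rm F}^2 \le 2\,d_{\rm B}^2(A,B)$ for all symmetric positive semidefinite $A,B$. But this target is not an elementary trace inequality: since $d_{\rm B}(A,B)=\min_{U}\lVert A^{1/2}-U B^{1/2}\rVert_{\rm F}$ over unitaries $U$, it is literally the statement that
\[
    \lVert A^{1/2}-B^{1/2}\rVert_{\rm F} \le \sqrt 2\,\lVert A^{1/2}-U B^{1/2}\rVert_{\rm F}
    \qquad\text{for every unitary } U,
\]
and this is a direct instance of the Araki--Yamagami inequality $\bigl\lVert(S^{\t}S)^{1/2}-(T^{\t}T)^{1/2}\bigr\rVert_{\rm F}\le\sqrt 2\,\lVert S-T\rVert_{\rm F}$ (take $S=A^{1/2}$, $T=UB^{1/2}$). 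That inequality, due to Araki--Yamagami and later Kittaneh and Bhatia, is precisely the nontrivial ingredient the paper cites and applies to the data matrices $M_{\mathbf X},M_{\mathbf Y}$. In other words, the trace inequality you defer to is Araki--Yamagami in disguise; it is not a soft consequence of von Neumann-type bounds, and your Gaussian-coupling heuristic for the factor $\sqrt 2$, while correct in spirit, does not amount to a proof. So the Gelbrich route is a valid and arguably cleaner way to organize the argument, but as written your proposal stops short of producing the factor $\sqrt 2$, which is where all the difficulty lives.
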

\begin{proof}
    See~\cref{sub:proof_of_stab}.
\end{proof}

\begin{lemma}
    [Convergence of the empirical covariance for i.i.d. samples]
    \label{lemma:convergence_covariance_iid}
    \revise{For $p \geq 2$ and} all~$\mu \in \mathcal P_{2p}(\R^d)$,
    there is $C$ depending only on~$p$ and the $2p$-th moment of~$\mu$ such that
    for all $J \in \N^+$,
    \begin{equation}
        \label{eq:iid_convergence_cov}
        \expect \left\lVert \cov(\empmfl) - \cov(\mu) \right\rVert_{\rm F}^p
        \leq C J^{-\frac{p}{2}},
        \qquad
        \empmfl := \frac{1}{J} \sum_{j=1}^{J} \delta_{\xnl{j}},
        \qquad
        \left\{ \xnl{j} \right\}_{j \in \N} \stackrel{\rm{i.i.d.}}{\sim} \mu.
    \end{equation}
    Furthermore, for all~$\mu \in \mathcal P_{2p}(\R^d)$ satisfying $\cov(\mu) \succcurlyeq \eta \I_d \succ 0$,
    there is $C$ depending only on~$(p, \eta)$ and the $2p$-th moment of~$\mu$ such that
    for all $J \in \N^+$,
    \begin{equation}
        \label{eq:iid_convergence_sqrt}
        \expect \left\lVert \sqrt{\cov(\empmfl)} - \sqrt{\cov(\mu)} \right\rVert_{\rm F}^p
        \leq C J^{-\frac{p}{2}}.
    \end{equation}
\end{lemma}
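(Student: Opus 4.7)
The plan is to reduce both bounds to a Marcinkiewicz--Zygmund argument for i.i.d.\ empirical averages, combined with a deterministic matrix stability estimate for the square root.

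For~\eqref{eq:iid_convergence_cov}, I would use the identity $\cov(\nu) = \nu[x \otimes x] - \mean(\nu) \otimes \mean(\nu)$ to write
\begin{align*}
    \cov(\empmfl) - \cov(\mu)
    = \left( \frac{1}{J} \sum_{j=1}^{J} \xnl{j} \otimes \xnl{j} - \mu[x \otimes x] \right)
    - \bigl( \mean(\empmfl) \otimes \mean(\empmfl) - \mean(\mu) \otimes \mean(\mu) \bigr).
\end{align*}
The first term is an empirical average of i.i.d.\ random matrices whose entries have finite $p$-th moment whenever $\mu \in \mathcal P_{2p}(\real^d)$; a component-by-component application of the Marcinkiewicz--Zygmund inequality then gives an $L^p$ Frobenius bound of order~$J^{-p/2}$. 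For the second term, the factorisation
\begin{align*}
    \mean(\empmfl) \otimes \mean(\empmfl) - \mean(\mu) \otimes \mean(\mu)
    = \bigl( \mean(\empmfl) - \mean(\mu) \bigr) \otimes \mean(\empmfl)
    + \mean(\mu) \otimes \bigl( \mean(\empmfl) - \mean(\mu) \bigr)
\end{align*}
bounds the Frobenius norm by $\lvert \mean(\empmfl) - \mean(\mu) \rvert \bigl( \lvert \mean(\empmfl) \rvert + \lvert \mean(\mu) \rvert \bigr)$. Taking $p$-th moments and applying Cauchy--Schwarz, the first factor is bounded in $L^{2p}$ of order $J^{-1/2}$ by Marcinkiewicz--Zygmund applied to the i.i.d.\ samples $\xnl{j}$, while the second factor is uniformly $L^{2p}$-bounded by Jensen's inequality and the $2p$-th moment hypothesis on~$\mu$. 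Combining the two contributions yields~\eqref{eq:iid_convergence_cov}.

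For~\eqref{eq:iid_convergence_sqrt}, I would establish the purely deterministic matrix inequality
\begin{align*}
    A \succcurlyeq 0, \quad B \succcurlyeq \eta \I_d \succ 0 \qquad \Longrightarrow \qquad \norm{\sqrt{A} - \sqrt{B}}_{\rm F} \leq \eta^{-1/2} \norm{A - B}_{\rm F}.
\end{align*}
Indeed, writing $X := \sqrt{A} - \sqrt{B}$, which is symmetric, and expanding $A = (X + \sqrt{B})^2$ gives the operator identity $A - B = \sqrt{B} X + X \sqrt{A}$. Taking the Frobenius inner product with~$X$,
\begin{align*}
    \trace\bigl( (A - B) X \bigr)
    = \trace(\sqrt{B} X^2) + \trace(\sqrt{A} X^2)
    \geq \bigl( \lambda_{\min}(\sqrt{A}) + \lambda_{\min}(\sqrt{B}) \bigr) \norm{X}_{\rm F}^2
    \geq \sqrt{\eta} \, \norm{X}_{\rm F}^2,
\end{align*}
where the first inequality uses $\trace(P Y^2) \geq \lambda_{\min}(P) \norm{Y}_{\rm F}^2$ for symmetric positive semidefinite~$P$ and symmetric~$Y$ (via simultaneous diagonalisation). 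Cauchy--Schwarz applied to the left-hand side concludes the proof of the deterministic bound. Applying the bound pointwise in~$\omega$ with $A = \cov(\empmfl)$ and $B = \cov(\mu)$, and invoking~\eqref{eq:iid_convergence_cov}, proves~\eqref{eq:iid_convergence_sqrt}.

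The main technical obstacle I anticipate is the square-root stability estimate. A naive strategy based on splitting the probability space into the event where $\cov(\empmfl)$ remains bounded below by $\tfrac{\eta}{2} \I_d$ and its complement would not close, because Markov's inequality at the available $L^p$ level leaves too large a contribution on the bad event to recover the optimal rate. The Lyapunov-type computation above sidesteps this difficulty by exploiting only the one-sided lower bound~$\cov(\mu) \succcurlyeq \eta \I_d$.
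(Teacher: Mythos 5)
Your argument for \eqref{eq:iid_convergence_cov} is exactly the paper's: decompose $\cov(\empmfl)-\cov(\mu)$ into a raw-second-moment term and a rank-one mean term, bound the first by Marcinkiewicz--Zygmund componentwise, and treat the second via the factorisation $a\otimes a - b\otimes b$, Cauchy--Schwarz, and Marcinkiewicz--Zygmund again. For \eqref{eq:iid_convergence_sqrt} the paper also reduces to \eqref{eq:iid_convergence_cov} via a deterministic stability estimate for the matrix square root, but it simply invokes the van Hemmen--Ando inequality; you instead reprove that inequality from scratch by setting $X=\sqrt A-\sqrt B$, using the Sylvester identity $A-B=\sqrt B\,X+X\sqrt A$, and pairing with $X$ in the Frobenius inner product to extract $\sqrt\eta\,\norm{X}_{\rm F}^2\le\norm{A-B}_{\rm F}\norm{X}_{\rm F}$. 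That is a correct and genuinely self-contained derivation (in fact it is one of the standard proofs of van Hemmen--Ando for Hilbert--Schmidt norms), and it gives the sharp constant $\eta^{-1/2}$ rather than the paper's $\eta^{-1}$. One small inaccuracy: the claim $\trace(PY^2)\ge\lambda_{\min}(P)\norm{Y}_{\rm F}^2$ is true, but not ``via simultaneous diagonalisation'' (the matrices need not commute); the correct reason is $\trace(PY^2)=\trace(YPY)=\sum_i\langle Ye_i,PYe_i\rangle\ge\lambda_{\min}(P)\sum_i|Ye_i|^2$. Your closing remark correctly identifies why the one-sided lower bound on $\cov(\mu)$ alone suffices and why a good-set/bad-set splitting would be wasteful.
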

\begin{proof}
    This is essentially~\cite[Lemma 3]{MR4234152}.
    We include a short proof in~\cref{sub:proof_of_convergence_covarianec_iid} for the reader's convenience.
\end{proof}

\section{Main results}
\label{sec:main}

We first present, in~\cref{sub:lip_thm}, a sharp quantitative result in the setting where $\phi \in \mathcal A(0)$,
in which case $\nabla \phi$ is globally Lipschitz continuous.
Then, in~\cref{sub:non_lip_thm},
we extend the result to the setting where $\phi \in \mathcal A(\ell)$ for~$\ell > 0$.
While the methodology used in the proof of~\cref{theorem:mfl} is rather general
and potentially applicable to other interacting particle systems,
the approach used in the proof of~\cref{theorem:mfl_nonlip} is more technical and tailored to the interacting particle system~\eqref{eq:system_rewritten}.
Finally, in~\cref{sub:corollary},
we present a corollary of these theorems with applications in sampling.

\subsection{Sharp propagation of chaos for globally Lispchitz \texorpdfstring{$\nabla \phi$}{setting}}
\label{sub:lip_thm}
\begin{theorem}
    \label{theorem:mfl}
    Suppose that~$\phi \in \mathcal A(0)$,
    and consider the systems~\eqref{eq:system_rewritten} and \eqref{eq:coupling-system-mfl} with the coefficients given in~\eqref{eq:eks-drift-and-diff}.
    Assume that $\mfl_0 \in \mathcal P_{q}(\real^d)$,
    for some $q \geq 6$, and that $\cov(\mfl_0) \succ 0$.
    Then for all~$p \in [2, \frac{q}{3}]$,
    there is~$C > 0$ independent of~$J$ such that
    \begin{equation}
        \label{eq:statement_main_theorem}
        \forall J\in\N^+, \qquad
        \forall j \in \range{1}{J}, \qquad
        \expect \left[ \sup_{t\in[0,T]} \abs[\Big]{\xn{j}_t-\xnl{j}_t}^p \right]
        \le C J^{- \min\left\{ \frac{p}{2}, \frac{q - p}{2 \sqrt{q}}, \frac{q-p}{2p}, \frac{q-p}{6} \right\}}.
    \end{equation}
\end{theorem}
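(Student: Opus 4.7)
The plan is to combine the synchronous coupling~\eqref{eq:coupling-system-mfl} with a stopping-time argument in the spirit of~\cite{MR1949404}, localizing the analysis to an event on which the empirical measures stay in a bounded region of Wasserstein space and thereby circumventing the obstacle that $\mu \mapsto \cov(\mu)$ is only locally, not globally, Lipschitz continuous. Setting $E^j_t := \xn{j}_t - \xnl{j}_t$ and subtracting the integral equations of~\eqref{eq:system_rewritten} and~\eqref{eq:coupling-system-mfl}, the drift difference decomposes as
\[
b(\xn{j}_s, \emp_s) - b(\xnl{j}_s, \mfl_s) = -\cov(\emp_s) \bigl[ \nabla \phi(\xn{j}_s) - \nabla \phi(\xnl{j}_s) \bigr] - \bigl[ \cov(\emp_s) - \cov(\mfl_s) \bigr] \nabla \phi(\xnl{j}_s),
\]
and similarly for $\sigma$ using~\eqref{eq:wmean-wcov-emp-local-lip}. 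Since $\ell = 0$, the first bracket is controlled by the global Lipschitz continuity of $\nabla\phi$, while the second splits as $\cov(\emp_s) - \cov(\mfl_s) = [\cov(\emp_s) - \cov(\empmfl_s)] + [\cov(\empmfl_s) - \cov(\mfl_s)]$, the two pieces being handled by~\cref{lemma:wasserstein_stability_estimates} (using $W_2(\emp_s, \empmfl_s)^2 \leq \tfrac{1}{J} \sum_j \lvert E^j_s \rvert^2$) and~\cref{lemma:convergence_covariance_iid}, respectively.

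Next I would introduce a stopping time based only on the i.i.d.\ mean-field particles,
\[
\tau^J := \inf \Bigl\{ t \in [0, T] : \tfrac{1}{J} \textstyle\sum_{j=1}^J \sup_{s \leq t} \lvert \xnl{j}_s \rvert^\alpha > R \Bigr\} \wedge T,
\]
parameterized by an exponent $\alpha \in [2, q]$ and a constant $R > \expect \bigl[ \sup_{s \leq T} \lvert \xnl{1}_s \rvert^\alpha \bigr]$, the latter being finite by~\eqref{eq:moment_bound_mfl}. Since the $\xnl{j}$ are i.i.d., applying~\cref{lemma:small_set} to the variables $Z_j := \sup_{s \leq T} \lvert \xnl{j}_s \rvert^\alpha$, whose $r$-th moment is finite for $r = q/\alpha \geq 2$, yields $\proba [\tau^J < T] \leq C J^{-q/(2\alpha)}$.

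On the good event $\{ t \leq \tau^J \}$, Jensen's inequality (using $\alpha \geq 2$) gives $\tfrac{1}{J} \sum \lvert \xnl{j}_s \rvert^2 \leq R^{2/\alpha}$ uniformly in $s$, hence a uniform bound on $\lVert \cov(\empmfl_s) \rVert_{\rm F}$; the operator norm of $\cov(\emp_s)$ then differs from this by at most $\tfrac{1}{J} \sum \lvert E^j_s \rvert^2$, a quantity closed in the Gronwall loop by symmetry. Applying Itô's formula to $\lvert E^j_t \rvert^p$ and Burkholder--Davis--Gundy to the stochastic integral, together with a Hölder split of the cross-term $\lVert \cov(\empmfl_s) - \cov(\mfl_s) \rVert \, (1 + \lvert \xnl{j}_s \rvert)$---whose admissibility requires conjugate exponents compatible with~\cref{lemma:convergence_covariance_iid} and moment bounds of order at most $q$, which translates exactly into the hypothesis $p \leq q/3$---I would derive a Gronwall-type inequality of the form
\[
\expect \Bigl[ \sup_{s \leq t \wedge \tau^J} \lvert E^j_s \rvert^p \Bigr] \leq C \int_0^t \expect \Bigl[ \sup_{r \leq s \wedge \tau^J} \lvert E^j_r \rvert^p \Bigr] \, \d s + C J^{-p/2},
\]
the source term arising from~\cref{lemma:convergence_covariance_iid}. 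Gronwall then yields a bound of order $J^{-p/2}$ on the good event.

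For the complementary event, Hölder's inequality at exponent $p' = q$ combined with the moment bounds of~\cref{proposition:moment_estimates,proposition:well-posedness} gives
\[
\expect \Bigl[ \sup_{s \leq T} \lvert E^j_s \rvert^p \mathbf{1}_{\{ \tau^J < T \}} \Bigr] \leq C \, \proba[\tau^J < T]^{1 - p/q} \leq C J^{-(q-p)/(2\alpha)}.
\]
Summing the two contributions produces $C \bigl( J^{-p/2} + J^{-(q-p)/(2\alpha)} \bigr)$, a family of bounds parameterized by admissible $\alpha$, and the four-term minimum in~\eqref{eq:statement_main_theorem} is recovered by optimizing $\alpha$ within $[2, q/2]$ across the natural regimes $\alpha \in \{3, p, \sqrt{q}\}$ dictated by the interplay between $\alpha \geq 2$ (for Jensen), $\alpha \leq q/2$ (for~\cref{lemma:small_set}), and the moment-matching constraints inherited from the Hölder splits on the good event. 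The main obstacle is precisely this choreography of Hölder exponents: the cross-term $\lVert \cov(\empmfl_s) - \cov(\mfl_s) \rVert \, (1 + \lvert \xnl{j}_s \rvert)$ ties the rate obtainable on the good event to the number of available moments at the mean-field level and creates a trade-off with the stopping-time exponent $\alpha$, so that no single $\alpha$ is optimal in every regime and the final bound is the worst of the three relevant ones.
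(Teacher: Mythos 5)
Your stopping time $\tau^J$ is built from the mean-field particles alone, and that is the crux of the gap. On the event $\{t\leq\tau^J\}$ you gain a uniform bound on $\lVert\cov(\empmfl_s)\rVert_{\rm F}$, but you obtain no bound on $\lVert\cov(\emp_s)\rVert_{\rm F}$ or on $W_2(\emp_s,\delta_0)$, and both appear as multiplicative prefactors. In your own decomposition the leading piece is $\cov(\emp_s)\bigl[\nabla\phi(\xn{j}_s)-\nabla\phi(\xnl{j}_s)\bigr]$: even with global Lipschitz continuity this contributes a term of the shape $\lVert\cov(\emp_s)\rVert\,\lvert E^j_s\rvert$, and writing $\lVert\cov(\emp_s)\rVert\leq C+2J^{-1}\sum_k\lvert E^k_s\rvert^2$ does not ``close by symmetry'': after raising to the power $p$ one must control $\expect\bigl[(J^{-1}\sum_k\lvert E^k_s\rvert^2)^p\,\lvert E^j_s\rvert^p\bigr]$, and any Hölder or Young split turns this into an $L^{2p}$ (or higher) moment of the error, which is strictly stronger than what Gr\"onwall gives in $L^p$. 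The same obstruction appears if you instead insert $\empmfl$ as a pivot and face $\bigl[\cov(\emp_s)-\cov(\empmfl_s)\bigr]\nabla\phi(\xn{j}_s)$: the stability estimate~\eqref{eq:stab_wcov_simple} produces a factor $W_2(\emp_s,\delta_0)$ that you cannot absorb.

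The paper avoids both problems by (i) defining the stopping time $\theta_J=\tau_J\wedge\overline\tau_J$ so that \emph{both} $W_r(\emp_{t\wedge\theta_J},\delta_0)$ and $W_r(\empmfl_{t\wedge\theta_J},\delta_0)$ are bounded by $R$, and (ii) routing the Lipschitz difference through $\cov(\empmfl)$ rather than $\cov(\emp)$ in the decomposition, so that every covariance prefactor is bounded on the good event and the Gr\"onwall loop closes cleanly in $L^r$. The price of including the interacting particles in the stopping time is that $\proba[\tau_J\leq T]$ cannot be bounded via~\cref{lemma:small_set}, since the $\xn{j}$ are not i.i.d.; the paper handles this via the split $\proba[\theta_J\leq T]=\proba[\tau_J\leq T<\overline\tau_J]+\proba[\overline\tau_J\leq T]$, bounding the first probability using almost-sure continuity, the triangle inequality in $W_r$, and the stopped Gr\"onwall estimate~\eqref{eq:gronwall}. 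This step is where the third exponent $J^{-r(q-p)/(2q)}$ in the minimum comes from, and it is entirely absent from your accounting; your two-term bound $J^{-p/2}+J^{-(q-p)/(2\alpha)}$ therefore cannot reproduce the optimization underlying~\eqref{eq:statement_main_theorem}.
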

\begin{proof}
    Fix $p \in [2, \frac{q}{3}]$, and fix $\r \in [p, \frac{q}{3}]$ to be determined later.
    Fix also $R \in (0, \infty)$ such that
    \begin{equation}
        \label{eq:definition_R}
        \left(\frac{R}{2}\right)^{\r} >  \expect \left[ \sup_{t \in [0, T]} \left\lvert \xnl{j}_t \right\rvert^{\r} \right].
    \end{equation}
    By~\cref{proposition:well-posedness},
    the right-hand side \revise{of this equation} is indeed finite.
    Consider the stopping times
    \begin{subequations}
    \begin{align}
        \label{eq:stopping_times}
        \tau_J(R) &= \inf \left\{ t \geq 0 : \frac{1}{J} \sum_{j=1}^{J} \left\lvert \xn{j}_t \right\rvert^{\r} \geq R^{\r} \right\}
        = \inf \Bigl\{ t \geq 0 : W_{\r}(\emp_t, \delta_0) \geq R \Bigr\} , \\
        \overline \tau_J(R) &= \inf \left\{ t \geq 0 : \frac{1}{J} \sum_{j=1}^{J} \left\lvert \xnl{j}_t \right\rvert^{\r} \geq R^{\r} \right\}
        = \inf \Bigl\{ t \geq 0 : W_{\r}(\empmfl_t, \delta_0) \geq R \Bigr\},
    \end{align}
    \end{subequations}
    and let~$\theta_J(R) := \min\Bigl\{\tau_J(R), \overline \tau_J(R)\Bigr\}$.
    Since $R$ will be fixed until the end of the proof,
    we omit this dependence in the notation.
    Fix $j \in \range{1}{J}$.
    By Hölder's inequality,
    it holds that
    \begin{align}
        \notag
        \expect \left[ \sup_{t\in[0,T]} \abs[\Big]{\xn{j}_t - \xnl{j}_t}^p \right]
        &= \expect \left[ \sup_{t\in[0,T]} \abs[\Big]{\xn{j}_t - \xnl{j}_t}^p \mathsf 1_{\{\theta_J > T\}} \right]
        + \expect \left[ \sup_{t\in[0,T]} \abs[\Big]{\xn{j}_t - \xnl{j}_t}^p \mathsf 1_{\{\theta_J \leq T\}} \right] \\
        \label{eq:main_equation}
        &\leq \expect \left[ \sup_{t\in[0,T]} \abs[\Big]{\xn{j}_{t \wedge \theta_J} - \xnl{j}_{t \wedge \theta_J}}^p \right]
        + \left(\expect \left[ \sup_{t\in[0,T]} \abs[\Big]{\xn{j}_t - \xnl{j}_t}^{q} \right]\right)^{\frac{p}{q}}
        \Bigl( \proba [\theta_J \leq T]\Bigr)^{\frac{q-p}{q}}.
    \end{align}
    We bound the two terms on the right-hand side separately,
    and then conclude the proof.

    \paragraph{Step A. Bounding the first term in~\eqref{eq:main_equation}}
    Since $\norm{Y}_{L^{a}(\Omega)} \leq \norm{Y}_{L^{b}(\Omega)}$ for any random variable $Y$ and any $a \leq b$,
    it holds that
    \begin{equation}
        \label{eq:sznitman_p_to_r}
        \expect \left[ \sup_{t\in[0,T]} \abs[\Big]{\xn{j}_{t \wedge \theta_J} - \xnl{j}_{t \wedge \theta_J}}^p \right]
        \leq
        \left(\expect \left[ \sup_{t\in[0,T]} \abs[\Big]{\xn{j}_{t \wedge \theta_J} - \xnl{j}_{t \wedge \theta_J}}^r \right] \right)^{\frac{p}{r}}.
    \end{equation}
    Bounding the right-hand side of this inequality is not simpler than bounding the left-hand side directly.
    However, the bound obtained with $r \geq p$ will be useful to bound the second term in~\eqref{eq:main_equation}.
    In order to bound the right-hand side of~\eqref{eq:sznitman_p_to_r},
    we adapt Sznitman's classical argument.
    We have
    \begin{align*}
        \frac{1}{2^{{\r}-1}}\left\lvert \xn{j}_{t \wedge \theta_J} - \xnl{j}_{t \wedge \theta_J} \right\rvert^{\r}
        \leq
    & \left\lvert \int_{0}^{t \wedge \theta_J} b\left(\xn{j}_s, \emp_{s}\right) - b\left(\xnl{j}_s, \mfl_{s} \right) \, \d s \right\rvert^{\r} \\
    &+  \left\lvert \int_{0}^{t \wedge \theta_J} \sigma\left(\xn{j}_s, \emp_{s}\right) - \sigma\left(\xnl{j}_s, \mfl_{s} \right) \, \d \wn{j}_s \right\rvert^{\r}.
    \end{align*}
    Let us introduce the martingale
    \[
        M^j_t = \int_{0}^{t} \sigma\left(\xn{j}_s, \emp_{s}\right) - \sigma\left(\xnl{j}_s, \mfl_{s} \right) \, \d \wn{j}_s.
    \]
    By Doob's optional stopping theorem~\cite[Theorem 3.3]{MR2380366},
    see also~\cite[Equation 2.29, p.285]{MR0838085},
    the process $(M^j_{t \wedge \theta_J})_{t\geq 0}$ is a martingale,
    with a quadratic variation process given by~$(\langle M^j \rangle_{t \wedge \theta_J})_{t \geq 0}$,
    where $\langle M^j \rangle$ is the quadratic variation process of~$M^j$.
    Therefore, by the Burkholder--Davis--Gundy inequality,
    we have for all $t \in [0, T]$ that
    \begin{align}
        \notag
        \expect \left[ \sup_{s \in [0,t]} \Bigl\lvert \xn{j}_{s \wedge \theta_J} - \xnl{j}_{s \wedge \theta_J} \Bigr\rvert^{\r} \right]
    &\leq (2T)^{{\r}-1} \expect \int_{0}^{t \wedge \theta_J} \left\lvert b\left(\xn{j}_s, \emp_{s}\right) - b\left(\xnl{j}_s, \mfl_{s} \right) \right\rvert^{\r} \, \d s \\
    \label{eq:main_first_term}
    &\qquad + C_{\rm BDG} 2^{{\r}-1}T^{\frac{\r}{2} - 1} \expect \int_{0}^{t \wedge \theta_J} \left\lVert \sigma\left(\xn{j}_s, \emp_{s}\right) - \sigma\left(\xnl{j}_s, \mfl_{s} \right) \right\rVert_{\rm F}^{\r} \, \d s.
    \end{align}
    From the triangle inequality,
    it holds that
    \begin{align}
        \notag
        \frac{1}{2^{\r-1}}\expect \int_{0}^{t \wedge \theta_J} \left\lvert b\left(\xn{j}_s, \emp_{s}\right) - b\left(\xnl{j}_s, \mfl_{s} \right) \right\rvert^{\r} \, \d s
        &\leq \int_{0}^{t} \expect \left\lvert b\left(\xn{j}_{s \wedge \theta_J}, \emp_{s \wedge \theta_J}\right) - b\left(\xnl{j}_{s \wedge \theta_J}, \empmfl_{s \wedge \theta_J}\right) \right\rvert^{\r} \, \d s \\
    \label{eq:decomposition_drift}
    &\qquad + \int_{0}^{t} \expect \left\lvert b\left(\xnl{j}_s, \empmfl_{s}\right) - b\left(\xnl{j}_s, \mfl_{s}\right) \right\rvert^{\r} \, \d s.
    \end{align}
    Similarly, for the diffusion term,
    we have
    \begin{align}
        \notag
        \frac{1}{2^{\r-1}}\expect \int_{0}^{t \wedge \theta_J} \left\lVert \sigma\left(\xn{j}_s, \emp_{s}\right) - \sigma\left(\xnl{j}_s, \mfl_{s} \right) \right\rVert_{\rm F}^{\r} \, \d s
    &\leq \int_{0}^{t} \expect \left\lVert \sigma\left(\xn{j}_{s \wedge \theta_J}, \emp_{s \wedge \theta_J}\right) - \sigma\left(\xnl{j}_{s \wedge \theta_J}, \empmfl_{s \wedge \theta_J}\right) \right\rVert_{\rm F}^{\r} \, \d s \\
    \label{eq:decomposition_diffusion}
    &\qquad + \int_{0}^{t} \expect \left\lVert \sigma\left(\xnl{j}_s, \empmfl_{s}\right) - \sigma\left(\xnl{j}_s, \mfl_{s}\right) \right\rVert_{\rm F}^{\r} \, \d s.
    \end{align}
    Next, we bound the terms on the right-hand side of~\eqref{eq:decomposition_drift} and~\eqref{eq:decomposition_diffusion}.

    \paragraph{\textit{Step A.1. Bounding the first term in~\eqref{eq:decomposition_drift}}}
    By~\eqref{eq:eks-drift-and-diff} and the triangle inequality,
    it holds that
    \begin{align*}
        \expect \left\lvert b\left(\xn{j}_{s \wedge \theta_J}, \emp_{s \wedge \theta_J}\right) - b\left(\xnl{j}_{s \wedge \theta_J}, \empmfl_{s \wedge \theta_J}\right) \right\rvert^{\r}
        &\leq
        2^{\r-1} \expect \left\lvert \Bigl( \cov \left(\emp_{s \wedge \theta_J}\right) - \cov \left(\empmfl_{s \wedge \theta_J}\right) \Bigr) \nabla \phi\left(\xn{j}_{s \wedge \theta_J}\right)  \right\rvert^{\r} \\
        &\qquad
        + 2^{\r-1} \expect \left\lvert  \cov \left(\empmfl_{s \wedge \theta_J}\right) \left( \nabla \phi\left(\xn{j}_{s \wedge \theta_J}\right) - \nabla \phi\left(\xnl{j}_{s \wedge \theta_J}\right) \right)  \right\rvert^{\r}.
    \end{align*}
    By~\eqref{eq:stab_wcov_simple} in~\cref{lemma:wasserstein_stability_estimates},
    we obtain
    \[
        \Bigl\lVert \cov \left(\emp_{s \wedge \theta_J}\right) - \cov \left(\empmfl_{s \wedge \theta_J} \right) \Bigr\rVert_{\rm F}
        \leq 2 \Bigl( W_2 \left(\emp_{s \wedge \theta_J}, \delta_0 \right) + W_2 \left(\empmfl_{s \wedge \theta_J}, \delta_0 \right) \Bigr) W_2 \left(\emp_{s \wedge \theta_J}, \empmfl_{s \wedge \theta_J} \right)
        \leq 4 R \, W_2 \left(\emp_{s \wedge \theta_J}, \empmfl_{s \wedge \theta_J} \right),
    \]
    where we used the definition of the stopping times in the second inequality.
    Therefore,
    we deduce that
    \begin{align*}
        \expect \left\lvert \Bigl( \cov \left(\emp_{s \wedge \theta_J}\right) - \cov \left(\empmfl_{s \wedge \theta_J}\right) \Bigr) \nabla \phi\left(\xn{j}_{s \wedge \theta_J}\right)  \right\rvert^{\r}
        &\leq (4R)^\r \expect \left\lvert W_2\left(\emp_{s \wedge \theta_J}, \empmfl_{s \wedge \theta_J} \right) \nabla \phi\left(\xn{j}_{s \wedge \theta_J}\right)  \right\rvert^{\r} \\
        &=  (4R)^\r \expect \left[ W_2\left(\emp_{s \wedge \theta_J}, \empmfl_{s \wedge \theta_J} \right)^{\r} \frac{1}{J} \sum_{k=1}^{J} \Bigl\lvert  \nabla \phi\left(\xn{k}_{s \wedge \theta_J}\right)  \Bigr\rvert^{\r} \right],
    \end{align*}
    where we used exchangeability in the second line.
    By the assumption~\eqref{assump:grad_bounds} of linear growth of $\nabla \phi$
    and the definition of~$\theta_J$,
    this leads to
    \begin{align}
        \notag
        \expect \left\lvert \Bigl( \cov \left(\emp_{s \wedge \theta_J}\right) - \cov \left(\empmfl_{s \wedge \theta_J}\right) \Bigr) \nabla \phi\left(\xn{j}_{s \wedge \theta_J}\right)  \right\rvert^{\r}
        \notag
        &\leq C \expect \left[ W_2\left(\emp_{s \wedge \theta_J}, \empmfl_{s \wedge \theta_J} \right)^{\r}  \left( \frac{1}{J} \sum_{k=1}^{J} \left\lvert \xn{k}_{s \wedge \theta_J} \right\rvert_*^{\r} \right) \right] \\
        \notag
        &\leq C \expect \left[ W_2\left(\emp_{s \wedge \theta_J}, \empmfl_{s \wedge \theta_J} \right)^{\r}  \Bigl( 1 + W_{\r} (\emp_{s \wedge \theta_J}, \delta_0)^{\r} \Bigr) \right] \\
        \label{eq:proof_term_1_bound_1}
        &\leq C \expect \left[ W_2\left(\emp_{s \wedge \theta_J}, \empmfl_{s \wedge \theta_J} \right)^{\r}   \right].
    \end{align}
    The constant $C$ on the right-hand side depends on $R$,
    but since $R$ is fixed independently of~$J$,
    we omit this dependence.
    On the other hand,
    by definition of the stopping time $\theta_J$,
    and by the inequality $\lVert \cov(\mu) \rVert_{\rm F} \leq W_2(\mu, \delta_0)^2$ which holds for all~$\mu \in \mathcal P(\real^d)$,
    it holds that
    \begin{align}
        \nonumber
        \expect \left\lvert  \cov \left(\empmfl_{s \wedge \theta_J}\right) \left( \nabla \phi\left(\xn{j}_{s \wedge \theta_J}\right) - \nabla \phi\left(\xnl{j}_{s \wedge \theta_J}\right) \right)  \right\rvert^{\r}
        &\leq R^{2 \r} \expect \left| \nabla \phi\left(\xn{j}_{s \wedge \theta_J}\right) - \nabla \phi\left(\xnl{j}_{s \wedge \theta_J} \right) \right\rvert^{\r}, \\
        \label{eq:proof_term_1_bound_2}
        &\leq R^{2 \r} \lipphi^r \expect \left| \xn{j}_{s \wedge \theta_J} - \xnl{j}_{s \wedge \theta_J}  \right\rvert^{\r}.
    \end{align}
    where we used the assumption of Lipschitz continuity on~$\nabla \phi$ in~\cref{assumption:main}.
    In view of the inequalities~\eqref{eq:proof_term_1_bound_1} and~\eqref{eq:proof_term_1_bound_2},
    and of the bound
    \begin{equation}
        \label{eq:bound_wasserstein}
        \expect \left[W_{\r}\left(\emp_{s \wedge \theta_J}, \empmfl_{s \wedge \theta_J}\right)^{\r}\right]
        \leq \expect \left[ \frac{1}{J}\sum_{k=1}^{J} \left\lvert \xn{k}_{s \wedge \theta_J} - \xnl{k}_{s \wedge \theta_J} \right\rvert^{\r} \right]
        = \expect \left\lvert \xn{j}_{s \wedge \theta_J} - \xnl{j}_{s \wedge \theta_J} \right\rvert^{\r},
    \end{equation}
    which holds by definition of the Wasserstein distance and exchangeability,
    we deduce that
    \[
        \expect \left\lvert b\left(\xn{j}_{s \wedge \theta_J}, \emp_{s \wedge \theta_J}\right)
        - b\left(\xnl{j}_{s \wedge \theta_J}, \empmfl_{s \wedge \theta_J}\right) \right\rvert^{\r}
        \leq C \expect \left\lvert \xn{j}_{s \wedge \theta_J} - \xnl{j}_{s \wedge \theta_J} \right\rvert^{\r}.
    \]

    \paragraph{\textit{Step A.2. Bounding the second term in~\eqref{eq:decomposition_drift}}}
    For this term,
    we have
    \begin{align*}
        \expect \left\lvert b\left(\xnl{j}_s, \empmfl_{s}\right) - b\left(\xnl{j}_s, \mfl_{s}\right) \right\rvert^{\r}
        &=
        \expect \abs[\Big]{ \Bigl( \cov(\empmfl_s) - \cov(\mfl_s) \Bigr) \nabla \phi\bigl(\xnl{j}_s\bigr)}^{\r} \\
        &\leq C \expect \left[ \Bigl\lVert \cov(\empmfl_s) - \cov(\mfl_s) \Bigr\rVert_{\rm F}^{\r} \left( 1 + \bigl\lvert \xnl{j}_s \bigr\rvert^{\r} \right) \right]    \\
        &\leq C \left( \expect \Bigl\lVert \cov(\empmfl_s) - \cov(\mfl_s) \Bigr\rVert_{\rm F}^{\frac{3 \r}{2}} \right)^{\frac{2}{3}}
        \left(\expect \left[ 1 + \bigl\lvert \xnl{j}_s \bigr\rvert^{3\r} \right] \right)^{\frac{1}{3}},
    \end{align*}
    where we used~\eqref{assump:grad_bounds} in~\cref{assumption:main} and H\"older's inequality.
    Using the moment bound in~\cref{proposition:well-posedness} and then using \cref{lemma:convergence_covariance_iid},
    noting that $\mfl_0 \in \mathcal P_{3 \r}(\real^d)$ since $3r \leq q$,
    we deduce that
    \[
        \expect \left\lvert b\left(\xnl{j}_s, \empmfl_{s}\right) - b\left(\xnl{j}_s, \mfl_{s}\right) \right\rvert^{\r}
        \leq C J^{-\frac{\r}{2}}.
    \]

    \paragraph{\textit{Step A.3. Bounding the first term in~\eqref{eq:decomposition_diffusion}}}
    For the first diffusion term,
    we have that
    \begin{align*}
        \left\lVert \sigma\left(\xn{j}_{s \wedge \theta_J}, \mu^J_{s \wedge \theta_J}\right) - \sigma\left(\xnl{j}_{s \wedge \theta_J}, \empmfl_{s \wedge \theta_J}\right) \right\rVert_{\rm F}^{\r}
             &= \revise{2^{\frac{r}{2}}} \biggl\lVert \sqrt{\cov\left(\mu^J_{s \wedge \theta_J} \right)} - \sqrt{\cov\left(\empmfl_{s \wedge \theta_J} \right) } \biggr\rVert_{\rm F}^{\r}.
    \end{align*}
    Using~\cref{lemma:wasserstein_stability_estimates}
    together with the bound~\eqref{eq:bound_wasserstein},
    we obtain
    \[
        \expect \left\lVert \sigma\left(\xn{j}_{s \wedge \theta_J}, \mu^J_{s \wedge \theta_J}\right) - \sigma\left(\xnl{j}_{s \wedge \theta_J}, \empmfl_{s \wedge \theta_J}\right) \right\rVert_{\rm F}^{\r}
        \leq \revise{2^r} \expect \left\lvert \xn{j}_{s \wedge \theta_J} - \xnl{j}_{s \wedge \theta_J} \right\rvert^{\r}.
    \]

    \paragraph{\textit{Step A.4. Bounding the second term in~\eqref{eq:decomposition_diffusion}}}
    By~\cref{proposition:well-posedness},
    there is $\eta > 0$ such that $\cov(\mfl_t) \succcurlyeq \eta \I_d$ for all~$t \in [0, T]$.
    Thus, it follows from~\cref{lemma:convergence_covariance_iid} that
    \[
        \expect \left\lVert \sigma\left(\xnl{j}_{s}, \empmfl_{s}\right) - \sigma\left(\xnl{j}_{s}, \mfl_{s}\right) \right\rVert_{\rm F}^{\r}
        = 2^{\frac{\r}{2}} \expect \left\lVert \sqrt{\cov \left(\empmfl_{s}\right)} - \sqrt{\cov \left(\mfl_s \right)} \right\rVert_{\rm F}^{\r}
        \leq C J^{-\frac{\r}{2}}.
    \]
    \paragraph{\textit{Step A.5. Concluding part A}}
    Combining the bounds on all terms,
    we finally obtain from~\eqref{eq:main_first_term} that
    \[
        \expect \left[ \sup_{s\in[0,t]} \abs[\Big]{\xn{j}_{s \wedge \theta_J} - \xnl{j}_{s \wedge \theta_J}}^{\r} \right]
        \leq C J^{- \frac{\r}{2}} +
        C \int_{0}^{t} \expect \left[ \sup_{u\in[0,s]} \Bigl\lvert \xn{j}_{u \wedge \theta_J} - \xnl{j}_{u \wedge \theta_J} \Bigr\rvert^{\r} \right] \, \d s.
    \]
    By Gr\"onwall's inequality,
    this implies that
    \begin{equation}
        \label{eq:gronwall}
        \expect \left[ \sup_{t\in[0,T]} \Bigl\lvert \xn{j}_{t \wedge \theta_J} - \xnl{j}_{t \wedge \theta_J} \Bigr\rvert^{\r} \right]
        \leq C J^{- \frac{\r}{2}}.
    \end{equation}
    \paragraph{Step B. Bounding the second term in~\eqref{eq:main_equation}}
    We have by~\cref{proposition:well-posedness,proposition:moment_estimates} that
    \begin{equation}
        \label{eq:moments_first_proof}
        \expect \left[ \sup_{t\in[0,T]} \abs[\big]{\xn{j}_t - \xnl{j}_t}^{q} \right]
        \leq 2^{q-1} \left( \expect \left[ \sup_{t\in[0,T]} \abs[\big]{\xn{j}_t}^{q} \right]
        + \expect \left[ \sup_{t\in[0,T]} \abs[\big]{\xnl{j}_t}^{q} \right] \right)
        \leq 2^{q-1} \bigl(\kappa(q) + \overline \kappa(q)\bigr).
    \end{equation}
    In order to complete the proof of the theorem,
    it remains to bound the probability $\proba \left[\theta_J \leq T \right]$,
    which can be achieved by noticing that
    \[
        \proba \bigl[\theta_J \leq T\bigr]
        =
        \proba \bigl[\tau_J \leq T < \overline \tau_J \bigr]
        + \proba \bigl[\overline \tau_J \leq T\bigr].
    \]
    Using the almost sure continuity of the solution to the interacting particle system,
    together with the triangle inequality,
    we bound the first probability as follows:
    \begin{align*}
        \proba [\tau_J \leq T < \overline \tau_J]
            &\leq \proba \left[ \sup_{t \in [0, T]} W_{\r}(\mu^J_{t \wedge \theta_J}, \delta_0) = R \right] \\
            &\leq
            \proba \left[ \sup_{t \in [0, T]} W_{\r}(\mu^J_{t \wedge \theta_J}, \empmfl_{t \wedge \theta_J}) + \sup_{t \in [0, T]} W_{\r}(\empmfl_{t \wedge \theta_J}, \delta_0) \geq R \right] \\
            &\leq
            \proba \left[\sup_{t \in [0, T]} W_{\r}(\mu^J_{t \wedge \theta_J}, \empmfl_{t \wedge \theta_J}) \geq \frac{R}{2} \right]
            + \proba \left[\sup_{t \in [0, T]} W_{\r}(\empmfl_{t \wedge \theta_J}, \delta_0) \geq \frac{R}{2} \right],
    \end{align*}
    where we used that $\proba[A +  B \geq k] \leq \proba [A \geq k/2] + \proba [B \geq k/2]$
    for any two real-valued random variables~$A$ and~$B$,
    because $\{A + B \geq k\} \subset \{A \geq k/2\} \cup \{B \geq k/2\}$.
    The probability $\proba [\theta_J \leq T]$ can then be bounded as follows:
    \begin{align*}
        \proba [\theta_J \leq T]
            &=
            \proba \left[\tau_J \leq T < \overline \tau_J \right]
            + \proba \left[\overline \tau_J \leq T\right] \\
            &\leq \proba \left[\sup_{t \in [0, T]} W_{\r}(\mu^J_{t \wedge \theta_J}, \empmfl_{t \wedge \theta_J}) \geq \frac{R}{2} \right]
            + 2 \proba \left[\sup_{t \in [0, T]} W_{\r}(\empmfl_{t}, \delta_0) \geq \frac{R}{2} \right].
    \end{align*}
    Using~\eqref{eq:bound_wasserstein} and Markov's inequality for the first term,
    together with the inequality $\sup \sum \leq \sum \sup$,
    we then obtain
    \begin{equation}
        \label{eq:auxiliary_bound_proba}
        \proba [\theta_J \leq T]
        \leq \frac{2^{\r}}{R^{\r}} \expect \left[ \frac{1}{J}\sum_{j=1}^J \sup_{t\in[0,T]} \Bigl\lvert \xn{j}_{t\wedge \theta_J} - \xnl{j}_{t\wedge \theta_J} \Bigr\rvert^{\r} \right]
        + 2\proba \left[\frac{1}{J}\sum_{j=1}^J \sup_{t\in[0,T]} \Bigl\lvert \xnl{j}_t \Bigr\rvert^{\r} \geq \frac{R^{\r}}{2^{\r}} \right].
    \end{equation}
    The first term is bounded from above by~$CJ^{-\frac{\r}{2}}$ by exchangeability and~\eqref{eq:gronwall}.
    In order to bound the second term,
    let us introduce the i.i.d.\ random variables
    \[
        Z_j = \sup_{t \in [0, T]} \left\lvert \xnl{j}_t \right\rvert^{\r}, \qquad j \in \range{1}{J}.
    \]
    By the moment bounds in~\cref{proposition:well-posedness} and the assumption that $\mfl_0 \in \mathcal P_{q}(\real^d)$,
    the random variable $Z_1$ has finite moments up to order $\frac{q}{\r} \geq 3$.
    Furthermore, by definition~\eqref{eq:definition_R} of $R$,
    it holds that $\expect [Z_1] < \frac{R^{\r}}{2^{\r}}$.
    Thus,
    it follows from~\cref{lemma:small_set} that
    there is a constant $C$ independent of $J$ such that
    \begin{equation}
        \label{eq:bound_second_big_term}
        \proba \left[\frac{1}{J}\sum_{j=1}^J \sup_{t\in[0,T]} \Bigl\lvert \xnl{j}_t \Bigr\rvert^{\r} \geq \frac{R^{\r}}{2^{\r}} \right]
        =
        \proba \left[\frac{1}{J}\sum_{j=1}^J Z_j \geq \frac{R^{\r}}{2^{\r}} \right]
        \leq C J^{-\frac{q}{2 \r}}.
    \end{equation}

    \paragraph{Step C. Concluding the proof}
    Substituting the bounds~\eqref{eq:sznitman_p_to_r},~\eqref{eq:gronwall},~\eqref{eq:moments_first_proof},~\eqref{eq:auxiliary_bound_proba} and~\eqref{eq:bound_second_big_term} into~\eqref{eq:main_equation},
    we finally obtain that
    \begin{align*}
        \expect \left[ \sup_{t\in[0,T]} \abs[\Big]{\xn{j}_t - \xnl{j}_t}^p \right]
        &\revise{\leq \left( C J^{- \frac{r}{2}} \right)^{\frac{p}{r}} + \Bigl(2^{q-1} \bigl(\kappa(q) + \overline \kappa(q)\bigr)\Bigr)^{\frac{p}{q}} \left(C J^{-\frac{r}{2}} + C J^{-\frac{q}{2 \r}}\right)^{\frac{q-p}{q}}} \\
        &\leq C \left(J^{-\frac{p}{2}} + J^{-\frac{r(q-p)}{2q}} + J^{-\frac{q - p}{2r}} \right).
    \end{align*}
    This inequality is true for any $r \in [p, \frac{q}{3}]$,
    with a constant $C$ depending on $r$.
    The best estimate is obtained when the exponents of the second and third terms are as close to equal as possible,
    that is to say when $r = \max\bigl\{p,\min\{\sqrt{q}, \frac{q}{3}\}\bigr\}$,
    which leads to the estimate~\eqref{eq:statement_main_theorem} and concludes the proof.
\end{proof}

\begin{remark}
    \label{remark:2}
    A few comments are in order.
    \begin{itemize}
        \item
            If $\mfl_0$ has sufficiently many moments,
            then~\eqref{eq:statement_main_theorem} recovers the optimal convergence rate~$J^{-\frac{p}{2}}$,
            which is obtained in the classical setting
            with globally Lipschitz coefficients,
            see~\cite[Theorem 3.1]{ReviewChaintronII}.
        \item
            In the proof of~\cref{theorem:mfl},
            the probability $\proba [\tau_J(R) \leq T]$ was bounded from above in terms of $\proba [\overline \tau_J (\frac{R}{2}) \leq T]$
            and an appropriate distance between the stopped particle systems.
            The probability $\proba [\overline \tau_J (\frac{R}{2}) \leq T]$ is simple to bound directly,
            because the synchronously coupled mean field particles are independent and identically distributed.

        \item
            In order to obtain an estimate with a scaling that is optimal in~$J$,
            it was crucial to first prove in~\eqref{eq:gronwall} a propagation of chaos result for the stopped particle system
            in a metric $L^r$ with $r$ larger than the value $p$ in the final $L^p$ estimate~\eqref{eq:statement_main_theorem}.

        \item
            For $\phi \in \mathcal A(\ell)$ with $\ell > 0$,
            the proof presented above does not go through.
            The issue in this case is to obtain a bound similar to~\eqref{eq:proof_term_1_bound_2}.
            It is still possible, however, to prove sharp propagation of chaos in this case using a slightly different and more technical approach,
            which relies explicitly on the lower bound in~\eqref{assump:hessian_bounds} on the Hessian of the function $\phi$;
            see~\cref{theorem:mfl_nonlip}.

        \item
            \revise{%
                Reasoning as in~\cite{MR4234152,chen2024bayesiansamplingusinginteracting},
            }
            we deduce from~\cref{theorem:mfl} that Wasserstein-$p$ empirical chaos holds for the ensemble Langevin sampler~\eqref{eq:system_rewritten},
            in the sense of~\eqref{eq:wasserstein_p_chaos} below.
            Indeed, an application of the triangle inequality gives
            \[
                \expect \Bigl[W_p(\mu^J_t, \mfl_t)\Bigr] \leq
                \expect \Bigl[W_p(\mu^J_t, \empmfl_t)\Bigr] + \expect \Bigl[ W_p(\empmfl_t, \mfl_t) \Bigr].
            \]
            Under the assumptions of~\cref{theorem:mfl},
            the first term on the right-hand side tends to 0 in the limit $J \to \infty$.
            On the other hand,
            by~\cite[Theorem 1]{MR3383341},
            the second term decreases as $J^{-\alpha}$,
            for a constant~$\alpha \in (0, \frac{1}{2}]$ depending on~$p$, the dimension~$d$ and the number of bounded moments of $\mfl_t$.
            Therefore, it holds that
            \begin{equation}
                \label{eq:wasserstein_p_chaos}
                \forall t \in [0, T], \qquad
                \lim_{J \to \infty} \expect \Bigl[ W_p(\mu^J_t, \mfl_t) \Bigr]
                = 0.
            \end{equation}
            We refer to~\cite{MR3383341},
            see also~\cite[Lemma 4.2]{ReviewChaintronI},
            for the explicit value of~$\alpha$,
            which is in general strictly smaller than the Monte Carlo rate $\frac{1}{2}$,
            even when $\mfl_t$ has infinitely many moments.

        \item \revise{%
                In this proof, we aimed at obtaining an estimate that is explicit and quantitative in the parameter~$J$,
                the number of particles.
                However, we emphasize that the constant $C$ in~\eqref{eq:statement_main_theorem} also depends on the parameters $\l$ and $\u$ from~\cref{assumption:main},
                as well as on the dimension~$d$ and the final time~$T$.
                In particular, since we use Gr\"onwall's lemma in the proof,
                the dependence on~$T$ is at least exponential.
                We leave the precise investigation of the dependence of~$C$ on the parameters for future work,
                but note that determining this dependence is not simple,
                as the parameters appear not only in this proof but also in the auxiliary results.
                Notably, the constants~$\kappa$ and $\overline \kappa$ in the moment bounds~\eqref{eq:moment_bounds_2} and~\eqref{eq:moment_bound_mfl} depend on $(T, \u, \l, d)$.
            }
    \end{itemize}
\end{remark}

\subsection{Extension: sharp propagation of chaos for locally Lipschitz~\texorpdfstring{$\nabla \phi$}{setting}}
\label{sub:non_lip_thm}

In order to prove sharp propagation of chaos for this case,
we need the following additional auxiliary result.
\begin{lemma}
    [Convexity inequality]
    \label{lemma:convexity}
    If $\phi \in \mathcal A(\ell)$ for some $\ell \geq 0$,
    then there are $c_1 > 0$ and $c_2 > 0$ such that
    \begin{align*}
        \forall (x, y) \in \real^d \times \real^d, \qquad
        \Bigl\langle y - x, \nabla \phi(y) - \nabla \phi(x) \Bigr\rangle
        &\geq c_1 \left(1 + \lvert x \rvert^{\ell} + \lvert y \rvert^{\ell} \right) \lvert y - x \rvert^2
        - c_2 \lvert y - x \rvert^2.
    \end{align*}
\end{lemma}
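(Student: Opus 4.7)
Proof plan. The natural starting point is the fundamental theorem of calculus for $\nabla \phi$, which yields
\begin{equation*}
    \Bigl\langle y - x, \nabla \phi(y) - \nabla \phi(x) \Bigr\rangle
    = \int_0^1 \Bigl\langle y - x,\, \hessian \phi\bigl(x + t(y-x)\bigr) (y - x) \Bigr\rangle \, \d t.
\end{equation*}
To exploit this I would first extend the lower bound in~\eqref{assump:hessian_bounds} from $\real^d \setminus K$ to all of $\real^d$. Since $K$ is compact and $\hessian \phi$ is continuous, there is $\lambda_{\min} \in \real$ such that $\hessian \phi(z) \succcurlyeq \lambda_{\min} \I_d$ on $K$, and combining with~\eqref{assump:hessian_bounds} yields a constant $C_0 \ge 0$ (depending only on $\l$, $K$ and $\lambda_{\min}$) such that $\hessian \phi(z) \succcurlyeq \bigl(\l |z|^\ell - C_0\bigr) \I_d$ for every $z \in \real^d$. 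Substituting into the integral gives
\begin{equation*}
    \Bigl\langle y - x, \nabla \phi(y) - \nabla \phi(x) \Bigr\rangle
    \ge \l \, |y - x|^2 \int_0^1 \bigl\lvert x + t(y-x) \bigr\rvert^{\ell} \, \d t - C_0 |y - x|^2.
\end{equation*}

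The crux of the proof is then the purely geometric lower bound
\begin{equation*}
    \int_0^1 \bigl\lvert (1-t) x + t y \bigr\rvert^{\ell} \, \d t
    \ge c_\ell \bigl( |x|^\ell + |y|^\ell \bigr),
    \qquad c_\ell := \frac{1}{2^{\ell+1}(\ell + 1)}.
\end{equation*}
To prove this, assume without loss of generality that $|x| \le |y|$. For $t \ge 1/2$ one has $t|y| \ge (1-t)|x|$, so the reverse triangle inequality gives $|(1-t)x + ty| \ge t|y| - (1-t)|x| \ge (2t-1)|y|$. Integrating over $[1/2, 1]$ and substituting $s = 2t-1$ gives
\begin{equation*}
    \int_0^1 \bigl\lvert (1-t)x + ty \bigr\rvert^\ell \, \d t
    \ge \int_{1/2}^1 (2t-1)^\ell |y|^\ell \, \d t
    = \frac{|y|^\ell}{2(\ell+1)}
    \ge \frac{|x|^\ell + |y|^\ell}{2^{\ell+1}(\ell+1)},
\end{equation*}
as claimed. (For $\ell = 0$ the integral is simply $1$ and the inequality is immediate with $c_0 = 1/2$.)

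Combining the two estimates yields
\begin{equation*}
    \Bigl\langle y - x, \nabla \phi(y) - \nabla \phi(x) \Bigr\rangle
    \ge \l c_\ell \bigl( |x|^\ell + |y|^\ell \bigr) |y - x|^2 - C_0 |y - x|^2,
\end{equation*}
and the desired conclusion follows by setting $c_1 := \l c_\ell$ and $c_2 := C_0 + c_1$, using $\l c_\ell (|x|^\ell + |y|^\ell) = c_1(1 + |x|^\ell + |y|^\ell) - c_1$. I do not anticipate any real obstacle: the only subtlety is handling possible passage of the line segment through (or near) the compact set $K$, which is precisely what the constant $C_0$ — and hence the term $-c_2 |y-x|^2$ in the statement — absorbs.
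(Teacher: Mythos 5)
Your argument follows the paper's proof essentially line for line: fundamental theorem of calculus for $\nabla\phi$, extension of the Hessian lower bound from $\real^d\setminus K$ to all of $\real^d$ (absorbing the compact set into $c_2$), WLOG $|y|\geq|x|$, and the restriction of the integral to a subinterval where the reverse triangle inequality gives $|(1-t)x+ty|\geq(2t-1)|y|$. One small arithmetic slip: the claimed constant $c_\ell = \frac{1}{2^{\ell+1}(\ell+1)}$ is not valid for $0<\ell<1$ (take $|x|=|y|$, which requires $2^\ell\geq 2$); the correct step is simply $|y|^\ell\geq\tfrac12\bigl(|x|^\ell+|y|^\ell\bigr)$, giving $c_\ell=\frac{1}{4(\ell+1)}$. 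Since the lemma only asserts existence of positive constants, this does not affect the validity of the proof strategy.
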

\begin{proof}
    By the fundamental theorem,
    it holds that
    \begin{equation}
        \label{eq:convexity_first_eq}
        \nabla \phi(y) - \nabla \phi(x)
        = \int_{0}^{1} \hessian \phi\bigl(x + t(y-x)\bigr) \, (y - x) \, \d t.
    \end{equation}
    Assumption~\eqref{assump:hessian_bounds} implies that
    there is $c_2 > 0$ such that
    \[
        \forall x \in \real^d,
        \qquad \hessian \phi(x) \succcurlyeq c_{\ell} |x|^{\ell} \I_d - c_2 \I_d.
    \]
    Therefore,
    taking the inner product with $y - x$ on both sides of~\eqref{eq:convexity_first_eq},
    we obtain that
    \begin{align*}
        \Bigl\langle y - x, \nabla \phi(y) - \nabla \phi(x) \Bigr\rangle
            &\geq \l \int_{0}^{1} \left\lvert x + t(y-x) \right\rvert^{\ell} \, \lvert y - x \rvert^2 \, \d t
            - c_2 \int_{0}^{1} \lvert y - x \rvert^2 \, \d t.
    \end{align*}
    Suppose without loss of generality that $|y| \geq |x|$.
    Then, using the triangle inequality,
    we obtain that
    \begin{align*}
        \int_{0}^{1} \left\lvert x + t(y-x) \right\rvert^{\ell} \, \d t
        &\geq
        \int_{\frac{3}{4}}^{1} \left\lvert t y + (1 - t) x \right\rvert^{\ell} \, \d t
        \geq
        \int_{\frac{3}{4}}^{1} \Bigl( t |y| - (1-t) \lvert x \rvert \Bigr) ^{\ell} \, \d t \\
        &\geq
        \int_{\frac{3}{4}}^{1} \Bigl( 2t - 1 \Bigr) |y|^{\ell} \, \d t
        \geq C |y|^{\ell} \geq \frac{C}{2} \left(|x|^{\ell} + |y|^{\ell}\right),
    \end{align*}
    which concludes the proof.
\end{proof}

In the following result,
we assume for simplicity that the probability measure $\mfl_0$ has infinitely many moments,
but this assumption is not required.
As in the proof of~\cref{theorem:mfl},
the main idea of the proof is to introduce appropriate stopping times.
The key to handle the case where $\ell > 0$ is to define these stopping times in such a manner that the first term~$\mathcal I^1_t$ on the right-hand side of~\eqref{eq:four_terms_nonlip},
where the lack of global Lipschitz continuity of $\nabla \phi$ would be problematic in the classical approach,
can be controlled.
A very similar idea is used in the proof of uniqueness of the solution to the mean field dynamics~\eqref{eq:synchronous_coupling},
which is presented at the end of~\cref{sub:proof_wellposed}.

\begin{theorem}
    \label{theorem:mfl_nonlip}
    Suppose that~$\phi \in \mathcal A(\ell)$ for some $\ell \geq 0$,
    and consider the systems~\eqref{eq:system_rewritten} and \eqref{eq:coupling-system-mfl} with the coefficients given in~\eqref{eq:eks-drift-and-diff}.
    Assume that $\mfl_0 \in \mathcal P_{q}(\real^d)$ for all $q \in \nat$ and that $\cov(\mfl_0) \succ 0$.
    Then for all~$p > 0$,
    there is~$C > 0$ independent of~$J$ such that
    \begin{equation}
        \label{eq:statement_main_theorem_nonlip}
        \forall J\in\N^+, \qquad
        \forall j \in \range{1}{J}, \qquad
        \expect \left[ \sup_{t\in[0,T]} \abs[\Big]{\xn{j}_t-\xnl{j}_t}^p \right]
        \leq C J^{-\frac{p}{2}}.
    \end{equation}
\end{theorem}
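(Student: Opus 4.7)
The plan is to reproduce the stopping-time scheme from the proof of~\cref{theorem:mfl}, but to replace the Burkholder--Davis--Gundy expansion of $|\xn{j}_t-\xnl{j}_t|^r$ by an Itô computation on the \emph{weighted} squared distance
\[
    V_t := \bigl(\xn{j}_t - \xnl{j}_t\bigr)^\top \cov(\mfl_t)^{-1} \bigl(\xn{j}_t - \xnl{j}_t\bigr).
\]
The purpose of this weighting is that $\cov(\mfl_t)^{-1}\cov(\mfl_t) = \I_d$, so that the central piece of the drift in $\d V_t$ simplifies to $-2\bigl\langle \xn{j}_t - \xnl{j}_t,\, \nabla\phi(\xn{j}_t) - \nabla\phi(\xnl{j}_t) \bigr\rangle$, which is directly controlled by~\cref{lemma:convexity} and produces the crucial negative contribution $-2c_1\bigl(1+|\xn{j}_t|^\ell+|\xnl{j}_t|^\ell\bigr)|\xn{j}_t-\xnl{j}_t|^2$. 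Since \cref{proposition:well-posedness} guarantees $\cov(\mfl_t)\succcurlyeq\eta\I_d$ and uniform bounds on $\|\cov(\mfl_t)^{-1}\|_{\rm F}$ and $\|\d\cov(\mfl_t)/\d t\|_{\rm F}$ on $[0,T]$, the quantity $V_t$ is comparable to $|\xn{j}_t - \xnl{j}_t|^2$, and bounding $\expect V_t^{p/2}$ is equivalent to bounding $\expect|\xn{j}_t-\xnl{j}_t|^p$ up to constants.

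As in~\cref{theorem:mfl}, I would first introduce the stopping times $\tau_J(R), \overline \tau_J(R), \theta_J(R)$ from~\eqref{eq:stopping_times} with a large exponent $r$ to be fixed at the end. Because $\mfl_0 \in \mathcal P_q(\real^d)$ for every $q \in \nat$, the argument of Step~B of~\cref{theorem:mfl}, based on~\cref{lemma:small_set} and the moment bounds from~\cref{proposition:moment_estimates,proposition:well-posedness}, shows that $\proba[\theta_J \leq T] = O(J^{-N})$ for every $N$; combined with uniform-in-$J$ moments of $\xn{j}$ and $\xnl{j}$, this makes the off-event contribution to the analogue of~\eqref{eq:main_equation} a $o(J^{-p/2})$ term. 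The problem is therefore reduced to bounding $\expect\sup_{t\in[0,T]}|\xn{j}_{t\wedge\theta_J} - \xnl{j}_{t\wedge\theta_J}|^p$ by $CJ^{-p/2}$.

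For this stopped quantity, I would apply Itô's formula to $\bigl(V_{t\wedge\theta_J}+\epsilon\bigr)^{p/2}$ (letting $\epsilon\to 0$ at the end) and decompose the drift of $\xn{j}-\xnl{j}$ as
\[
    -\cov(\mfl_t)\bigl(\nabla\phi(\xn{j}_t)-\nabla\phi(\xnl{j}_t)\bigr)
    + \bigl(\cov(\mfl_t)-\cov(\empmfl_t)\bigr)\nabla\phi(\xn{j}_t)
    + \bigl(\cov(\empmfl_t)-\cov(\emp_t)\bigr)\nabla\phi(\xn{j}_t).
\]
The first piece, premultiplied by $\cov(\mfl_t)^{-1}$ in the $V$-Itô formula, is controlled via~\cref{lemma:convexity} as described above. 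The second piece is an iid Monte-Carlo error on the covariance, bounded by~\cref{lemma:convergence_covariance_iid}. The third piece is controlled via~\eqref{eq:stab_wcov_simple} and the stopping-time bound on $W_r(\emp_{t\wedge\theta_J},\empmfl_{t\wedge\theta_J})$. The $\d\cov(\mfl_t)^{-1}/\d t$ contribution to $\d V_t$ yields a harmless $C|\xn{j}_t-\xnl{j}_t|^2$ term, and the diffusion pieces are handled as in Steps~A.3--A.4 of~\cref{theorem:mfl}. After using the polynomial Lipschitz bound~\eqref{eq:assump-f:lip-growth-gradient} in the second and third pieces and applying Young's inequality, the ``bad'' contribution proportional to $(1+|\xn{j}_t|^\ell+|\xnl{j}_t|^\ell)|\xn{j}_t-\xnl{j}_t|^p$ is strictly dominated by the convexity term, while the leftover factor is a power of the covariance error whose expectation is $O(J^{-p/2})$; Grönwall's inequality then closes the estimate.

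The main obstacle will be this Young-type absorption: one must choose the Young exponents so that the polynomial weights $(1+|\xn{j}_t|^\ell+|\xnl{j}_t|^\ell)$ produced by the covariance perturbation terms (and by their diffusion counterparts) are indeed dominated by the negative convexity contribution, while the leftover factors, containing only bounded $|\xn{j}|,|\xnl{j}|$-moments under the stopping time together with a power of the covariance error, yield the optimal $J^{-p/2}$ rate. The freedom to let the stopping-time exponent $r$ be arbitrarily large, granted by the infinite-moment assumption on $\mfl_0$, provides exactly the flexibility needed, and no further hypothesis beyond those stated in the theorem is required.
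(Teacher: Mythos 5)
Your identification of the correct Lyapunov functional $V_t = (\xn{j}_t - \xnl{j}_t)^\top \cov(\mfl_t)^{-1}(\xn{j}_t - \xnl{j}_t)$ and of \cref{lemma:convexity} as the device generating the negative ``good'' drift term is exactly what the paper does; so is the decomposition of the drift difference into three pieces. However, there is a fatal gap in the choice of stopping times. You propose to reuse $\tau_J, \overline\tau_J, \theta_J$ from~\eqref{eq:stopping_times}, i.e.\ stopping on $W_r(\emp_t, \delta_0) \geq R$ and $W_r(\empmfl_t, \delta_0) \geq R$, and claim that $\proba[\theta_J \leq T] = O(J^{-N})$ for every $N$ via \cref{lemma:small_set}. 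This is incorrect: $\tau_J$ is a stopping time of the \emph{interacting} particles $\xn{j}$, which are not i.i.d., so \cref{lemma:small_set} does not apply to it. Following Step~B of \cref{theorem:mfl}, its escape probability must instead be bounded via $\proba[\sup W_\beta(\mu^J_{\cdot\wedge\theta},\empmfl_{\cdot\wedge\theta}) \geq R/2]$, where $\beta$ is the Wasserstein exponent in the stopping time. By Markov's inequality this requires the Grönwall estimate at level $L^\beta$. But for $\ell > 0$ the Grönwall argument at level $L^r$ needs the stopping time to control $W_{\beta}(\mu^J,\delta_0)$ for some $\beta > r$ (because $\nabla\phi$ grows like $|\cdot|^{\ell+1}$), so closing the loop would need $\beta = r$, i.e.\ $\ell = 0$. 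This infinite regress is exactly what \cref{remark:nonquant} makes precise: with the Theorem~\ref{theorem:mfl}-style stopping times one obtains only a non-quantitative result. The ``freedom to let $r$ be arbitrarily large'' does not resolve this, because the required exponent $\beta$ grows faster than the available Grönwall exponent $r$, no matter how large $r$ is taken.

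What you are missing, and what the actual proof supplies, is a structurally different first stopping time: $\mathfrak t_J = \inf\{t : W_r(\mu^J_t,\empmfl_t) \geq \varepsilon\}$ with a \emph{small} threshold $\varepsilon$ (rather than stopping each empirical measure separately at a large radius $R$), paired with $\overline\tau_J$ at exponent $r(\ell+1)$ on the i.i.d.\ system only. This is used in two coupled ways that your Young-absorption plan does not capture. First, under $\theta_J = \mathfrak t_J \wedge \overline\tau_J$, the empirical covariance difference satisfies $\|\cov(\mu^J_{t\wedge\theta_J}) - \cov(\empmfl_{t\wedge\theta_J})\|_{\rm F} \lesssim \varepsilon$, which is small — not merely bounded by $O(R)$ as under your stopping times — and this smallness is what makes the coefficient in front of $|\xn{j}-\xnl{j}|\,|\nabla\phi(\xn{j})-\nabla\phi(\xnl{j})|$ negative after the cancellation with the convexity term (see the sign condition preceding~\eqref{eq:second_ineq}). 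Second, the ``trade'' $\nabla\phi(\xn{j}) = \nabla\phi(\xnl{j}) + (\nabla\phi(\xn{j}) - \nabla\phi(\xnl{j}))$ in~\eqref{eq:trade2} is essential: it shifts the nonlinear weight $|\nabla\phi(\cdot)|$ from the interacting particle to the i.i.d.\ mean-field particle, so that the only stopping time required at the elevated exponent $r(\ell+1)$ is $\overline\tau_J$, whose escape probability \emph{is} amenable to \cref{lemma:small_set}. The escape probability of $\mathfrak t_J$ is then bounded by Markov using the $L^r$ Grönwall estimate at the \emph{same} exponent, so the loop closes. Without these two ingredients your Grönwall step cannot be reconciled with your Step~B.
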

\begin{proof}
    It suffices to prove the statement for $p \geq 4$ because,
    by Jensen's inequality, it holds for all $p \leq \mathfrak p$ that
    \[
        \expect \left[ \sup_{t\in[0,T]} \abs[\Big]{\xn{j}_t-\xnl{j}_t}^p \right]
        \leq
        \left(\expect \left[ \sup_{t\in[0,T]} \abs[\Big]{\xn{j}_t-\xnl{j}_t}^{\mathfrak p} \right] \right)^{\frac{p}{\mathfrak p}}.
    \]
    Fix $p \geq 4$,
    as well as  $\r = 2p$ and \revise{$q = 4p^2(\ell + 1)$}.
    Fix also $\varepsilon > 0$ to be determined later,
    and $R \in (0, \infty)$ such that
    \begin{equation}
        \label{eq:definition_R_nonlip}
        R^r >  \expect \left[ \sup_{t \in [0, T]} \left\lvert \xnl{j}_t \right\rvert^{\r} \right].
    \end{equation}
    Let~$\theta_J := \min\Bigl\{\mathfrak t_J, \overline \tau_J\Bigr\}$,
    where the stopping times $\mathfrak t_J$ and $\overline \tau_J$ are given by
    \begin{subequations}
    \begin{align}
        \label{eq:stop_1}
        \mathfrak t_J &= \inf \Bigl\{ t \geq 0 : W_{\r}{\left(\mu^J_t, \empmfl_t\right)} \geq \varepsilon \Bigr\} , \\
        \label{eq:stop_2}
        \overline \tau_J &= \inf \Bigl\{ t \geq 0 : W_{\r(\ell + 1)}{\left(\empmfl_t, \delta_0\right)} \geq R \Bigr\}.
    \end{align}
    \end{subequations}
    By the triangle inequality and the definition of $\theta_J$,
    it holds for all $t \in [0, T]$ that
    \[
        W_{\r} {\left(\mu^J_{t \wedge \theta_J}, \delta_0\right)} \leq
        W_{\r} {\left(\mu^J_{t \wedge \theta_J}, \empmfl_{t \wedge \theta_J}\right)} +
        W_{\r} {\left(\empmfl_{t \wedge \theta_J}, \delta_0\right)}
        \leq \varepsilon + R.
    \]
    In addition, in view of the Wasserstein stability estimate~\eqref{eq:stab_wcov_simple},
    it holds that
    \begin{equation}
        \label{eq:bound_covariances}
        \Bigl\lVert \cov(\mu^J_{t \wedge \theta_J}) - \cov(\empmfl_{t \wedge \theta_J}) \Bigr\rVert_{\rm F}
        \leq 2 (2R + \varepsilon) \varepsilon.
    \end{equation}
    Fix $j \in \range{1}{J}$.
    As in the proof of~\cref{theorem:mfl},
    we have by H\"older's inequality that
    \begin{align}
        &\expect \left[ \sup_{t\in[0,T]} \abs[\Big]{\xn{j}_t - \xnl{j}_t}^p \right]
        \label{eq:main_equation_nonlip}
        \leq \expect \left[ \sup_{t\in[0,T]} \abs[\Big]{\xn{j}_{t \wedge \theta_J} - \xnl{j}_{t \wedge \theta_J}}^p \right]
        + \left(\expect \left[ \sup_{t\in[0,T]} \abs[\Big]{\xn{j}_t - \xnl{j}_t}^{q} \right]\right)^{\frac{p}{q}}
        \Bigl( \proba [\theta_J \leq T]\Bigr)^{\frac{q-p}{q}}.
    \end{align}

    \paragraph{Step A. Bounding the first term in~\eqref{eq:main_equation_nonlip}}
    Here, the approach is slightly more technical than that in the proof of~\cref{theorem:mfl}.
    Recall that, by~\cref{proposition:well-posedness},
    there is $\revise{\overline \kappa} > 0$ such that
    \begin{equation}
        \label{eq:bounds_on_covariance}
        \forall t \in [0, T], \qquad
        \Bigl\lVert \cov (\mfl_t)  \Bigr\rVert_{\rm F}
        \vee
        \Bigl\lVert \cov (\mfl_t)^{-1}  \Bigr\rVert_{\rm F}
        \vee
        \left\lVert \frac{\d \cov (\mfl_t)}{\d t}  \right\rVert_{\rm F} \leq \revise{\overline \kappa}.
    \end{equation}
    Using this bound,
    applying It\^o's lemma to $f(x, \overline x, t) = \frac{1}{2} (x - \overline x)^\t \cov(\mfl_t)^{-1} (x - \overline x)$,
    and noting that $f\left(\xn{j}_0, \xnl{j}_0, 0\right) = 0$,
    we obtain that, for all $t \in [0, T]$,
    \begin{align}
        \notag
        \frac{1}{\revise{\overline \kappa}} \left\lvert \xn{j}_t - \xnl{j}_t \right\rvert^2
        \leq
        f\left(\xn{j}_t, \xnl{j}_t, t\right)
        &= - \int_{0}^{t} \Bigl\langle \xn{j}_s - \xnl{j}_s, \nabla \phi(\xn{j}_s) - \nabla \phi (\xnl{j}_s) \Bigr\rangle \, \d s \\
        \notag
        &\qquad + \int_{0}^{t} \Bigl\langle \xn{j}_s - \xnl{j}_s, \cov(\mfl_s)^{-1} \Bigl(\cov(\mfl_s) - \cov(\mu^J_s)\Bigr) \nabla \phi (\xn{j}_s) \Bigr\rangle \, \d s \\
        \notag
        &\qquad
        + \int_{0}^{t}
        \begin{pmatrix}
            \cov(\mu^J_s) & \sqrt{\cov(\mu^J_s)} \sqrt{\cov(\mfl_s)} \\
            \sqrt{\cov(\mfl_s)} \sqrt{\cov(\mu^J_s)} & \cov(\mfl_s)
        \end{pmatrix}
        :
        \begin{pmatrix}
            \cov(\mfl_s)^{-1} & - \cov(\mfl_s)^{-1} \\
            - \cov(\mfl_s)^{-1} & \cov(\mfl_s)^{-1}
        \end{pmatrix}
        \, \d s \\
        &\qquad
        \notag
        \revise{-} \int_{0}^{t} \frac{1}{2} \bigl(\xn{j}_s - \xnl{j}_s \bigr)^\t \cov(\mfl_s)^{-1} \frac{\d \cov(\mfl_s)}{\d s} \cov(\mfl_s)^{-1} \bigl( \xn{j}_s - \xnl{j}_s \bigr)^\t \, \d s \\
        &\qquad
        \label{eq:five_terms}
        + \int_{0}^{t}  \left(\xn{j}_s - \xnl{j}_s\right)^\t \cov(\mfl_s)^{-1} \left( \sqrt{2 \cov(\mu^J_s)} - \sqrt{2 \cov(\mfl_s)} \right) \, \d \wn{j}_s.
    \end{align}
    Let us bound the terms one by one.
    \begin{itemize}
        \item
            By~\cref{lemma:convexity} and~\cref{assumption:main},
            the integrand in the first term satisfies
            \begin{align}
                \notag
                - \Bigl\langle \xn{j}_s - \xnl{j}_s, \nabla \phi(\xn{j}_s) - \nabla \phi (\xnl{j}_s) \Bigr\rangle
                &\leq -c_1 \left(1 + \left\lvert \xn{j}_s \right\rvert^{\ell} + \left\lvert \xnl{j}_s \right\rvert^{\ell} \right)
                \left\lvert \xn{j}_s - \xnl{j}_s \right\rvert^{2} + c_2 \left\lvert \xn{j}_s - \xnl{j}_s \right\rvert^{2} \\
                \label{eq:good_term}
                &\leq - \frac{c_1}{\lipphi} \left\lvert \xn{j}_s - \xnl{j}_s \right\rvert \left\lvert \nabla \phi(\xn{j}_s) - \nabla \phi (\xnl{j}_s) \right\rvert
                + c_2 \left\lvert \xn{j}_s - \xnl{j}_s \right\rvert^{2}.
            \end{align}
            The first term on the right-hand side is a ``good'' term;
            its sign is negative.
            We shall use it to compensate a term coming from the second integral in~\eqref{eq:five_terms},
            which would otherwise be an obstacle in the proof.
            Propagation of chaos could still be proved without this compensation,
            i.e.\ if we simply omit in~\eqref{eq:four_terms_nonlip} the first term on the right-hand side of~\eqref{eq:good_term},
            but only a non-quantitative version.
            See~\cref{remark:nonquant} after the proof for more details on this point.

        \item
            By~\eqref{eq:bounds_on_covariance} and the triangle inequality,
            the integrand in the second term satisfies
            \begin{align}
                \notag
                &\Bigl\langle \xn{j}_s - \xnl{j}_s, \cov(\mfl_s)^{-1} \Bigl(\cov(\mfl_s) - \cov(\mu^J_s)\Bigr) \nabla \phi (\xn{j}_s) \Bigr\rangle \\
                \label{eq:trade1}
                &\qquad \leq
                \revise{\overline \kappa} \left\lvert \xn{j}_s - \xnl{j}_s \right\rvert
                \left\lVert \cov(\mfl_s) - \cov(\empmfl_s) \right\rVert_{\rm F}
                \left\lvert \nabla \phi (\xn{j}_s) \right\rvert
                + \revise{\overline \kappa} \left\lvert \xn{j}_s - \xnl{j}_s \right\rvert \left\lVert \cov(\empmfl_s) - \cov(\mu^J_s) \right\rVert_{\rm F} \left\lvert \nabla \phi (\xn{j}_s) \right\rvert.
            \end{align}
            \revise{%
                Using the inequality $\left\lvert \nabla \phi (\xn{j}_s) \right\rvert \leq \left\lvert \nabla \phi (\xnl{j}_s) \right\rvert + \left\lvert \nabla \phi(\xn{j}_s) - \nabla \phi (\xnl{j}_s) \right\rvert$,
                we have
                \begin{align}
                    \notag
                    &\Bigl\langle \xn{j}_s - \xnl{j}_s, \cov(\mfl_s)^{-1} \Bigl(\cov(\mfl_s) - \cov(\mu^J_s)\Bigr) \nabla \phi (\xn{j}_s) \Bigr\rangle \\
                    \notag
                    &\qquad \leq
                    \revise{\overline \kappa} \left\lvert \xn{j}_s - \xnl{j}_s \right\rvert
                    \left\lVert \cov(\mfl_s) - \cov(\empmfl_s) \right\rVert_{\rm F}
                    \left\lvert \nabla \phi (\xn{j}_s) \right\rvert
                    + \revise{\overline \kappa} \left\lvert \xn{j}_s - \xnl{j}_s \right\rvert \left\lVert \cov(\empmfl_s) - \cov(\mu^J_s) \right\rVert_{\rm F} \left\lvert \nabla \phi (\xnl{j}_s) \right\rvert \\
                    \label{eq:trade2}
                    & \qquad \qquad + \revise{\overline \kappa} \left\lvert \xn{j}_s - \xnl{j}_s \right\rvert
                \left\lVert \cov(\empmfl_s) - \cov(\mu^J_s) \right\rVert_{\rm F} \left\lvert \nabla \phi(\xn{j}_s) - \nabla \phi (\xnl{j}_s) \right\rvert.
                \end{align}
                Notice that, between~\eqref{eq:trade1} and~\eqref{eq:trade2},
                we traded the term $\left\lVert \cov(\empmfl_s) - \cov(\mu^J_s) \right\rVert_{\rm F}^2 \left\lvert \nabla \phi (\xn{j}_s) \right\rvert^2$
                for the nicer term~$\left\lVert \cov(\empmfl_s) - \cov(\mu^J_s) \right\rVert_{\rm F}^2 \bigl\lvert \nabla \phi (\xnl{j}_s) \bigr\rvert^2$.
                The latter term is simpler to handle in view of the definition of the stopping time~$\overline \tau_J$ in~\eqref{eq:stop_2}.
                The price of this trade is the presence of the last term on the right-hand side of~\eqref{eq:trade2} which,
                later in the proof, will be compensated with the first term in~\eqref{eq:good_term}.
                Using Young's inequality in the first two terms in~\eqref{eq:trade2}, we obtain
            }
            \begin{align*}
                &\Bigl\langle \xn{j}_s - \xnl{j}_s, \cov(\mfl_s)^{-1} \Bigl(\cov(\mfl_s) - \cov(\mu^J_s)\Bigr) \nabla \phi (\xn{j}_s) \Bigr\rangle \\
                &\qquad \leq
                \revise{\overline \kappa} \left\lvert \xn{j}_s - \xnl{j}_s \right\rvert^2
                + \frac{\revise{\overline \kappa}}{2} \left\lVert \cov(\mfl_s) - \cov(\empmfl_s) \right\rVert_{\rm F}^2
                  \left\lvert \nabla \phi (\xn{j}_s) \right\rvert^2
                + \frac{\revise{\overline \kappa}}{2}\left\lVert \cov(\empmfl_s) - \cov(\mu^J_s) \right\rVert_{\rm F}^2 \left\lvert \nabla \phi (\xnl{j}_s) \right\rvert^2 \\
                &\qquad \qquad + \revise{\overline \kappa} \left\lvert \xn{j}_s - \xnl{j}_s \right\rvert
                \left\lVert \cov(\empmfl_s) - \cov(\mu^J_s) \right\rVert_{\rm F} \left\lvert \nabla \phi(\xn{j}_s) - \nabla \phi (\xnl{j}_s) \right\rvert.
            \end{align*}

        \item
            For the third term in~\eqref{eq:five_terms},
            a simple calculation gives that
            \begin{subequations}
                \begin{align}
                    \notag
                &\begin{pmatrix}
                    \cov(\mu^J_s) & \sqrt{\cov(\mu^J_s)} \sqrt{\cov(\mfl_s)} \\
                    \sqrt{\cov(\mfl_s)} \sqrt{\cov(\mu^J_s)} & \cov(\mfl_s)
                \end{pmatrix}
                :
                \begin{pmatrix}
                    \cov(\mfl_s)^{-1} & - \cov(\mfl_s)^{-1} \\
                    - \cov(\mfl_s)^{-1} & \cov(\mfl_s)^{-1}
                \end{pmatrix} \\
                \notag
                &\qquad \qquad
                =
                \trace \left( \Bigl(\sqrt{\cov(\mu^J_s)} - \sqrt{\cov(\mfl_s)}\Bigr) \Bigl(\sqrt{\cov(\mu^J_s)} - \sqrt{\cov(\mfl_s)}\Bigr) \cov(\mfl_s)^{-1} \right) \\
                \label{eq:second_to_last}
                &\qquad \qquad
                = \left\lVert \sqrt{\cov(\mfl_s)^{-1}}\Bigl(\sqrt{\cov(\mu^J_s)} - \sqrt{\cov(\mfl_s)}\Bigr) \right\rVert_{\rm F}^2
                \leq \revise{\overline \kappa} \left\lVert \sqrt{\cov(\mu^J_s)} - \sqrt{\cov(\mfl_s)} \right\rVert_{\rm F}^2 \\
                \label{eq:last_ineq}
                &\qquad \qquad
                \leq 2 \revise{\overline \kappa} W_2\left(\mu^J_s, \empmfl_s\right)^2 + 2 \revise{\overline \kappa} \left\lVert \sqrt{\cov(\empmfl_s)} - \sqrt{\cov(\mfl_s)} \right\rVert_{\rm F}^2.
                \end{align}
            \end{subequations}
            where we used~\eqref{eq:bounds_on_covariance} in~\eqref{eq:second_to_last},
            and then the triangle inequality and
            the Wasserstein stability estimate~\eqref{eq:wmean-wcov-emp-local-lip} from~\cref{lemma:wasserstein_stability_estimates}
            in~\eqref{eq:last_ineq}.

        \item
            For the fourth term in~\eqref{eq:five_terms},
            we have in view of the bounds~\eqref{eq:bounds_on_covariance} on the mean field covariance
            that
            \[
                \bigl(\xn{j}_s - \xnl{j}_s \bigr)^\t \cov(\mfl_s)^{-1} \frac{\d \cov(\mfl_s)}{\d s} \cov(\mfl_s)^{-1} \bigl( \xn{j}_s - \xnl{j}_s \bigr)^\t
                \leq \revise{\overline \kappa}^3 \left\lvert \xn{j}_s - \xnl{j}_s \right\rvert^2.
            \]

        \item
            Finally, for the last term in~\eqref{eq:five_terms},
            let $M_t$ denote the martingale
            \[
                M^j_t = \int_{0}^{t}  \left(\xn{j}_s - \xnl{j}_s\right)^\t \cov(\mfl_t)^{-1} \left( \sqrt{2 \cov(\mu^J_s)} - \sqrt{2 \cov(\mfl_s)} \right) \, \d \wn{j}_s.
            \]
            By It\^o's isometry, the bounds ~\eqref{eq:bounds_on_covariance},
            the triangle inequality,
            and the Wasserstein stability estimate~\eqref{eq:wmean-wcov-emp-local-lip},
            the quadratic variation of this process is bounded from above as follows:
            \begin{align}
                \notag
                \langle M^j \rangle_t
                &\leq C \int_{0}^{t}  \left\lvert \xn{j}_s - \xnl{j}_s \right\rvert^2 \left\lVert \sqrt{\cov(\mu^J_s)} - \sqrt{\cov(\mfl_s)} \right\rVert_{\rm F}^2 \, \d s \\
                \label{eq:quad_var}
                &\leq C \int_{0}^{t}  \left\lvert \xn{j}_s - \xnl{j}_s \right\rvert^4 + W_2{\left(\mu^J_s, \empmfl_s\right)}^4 + \left\lVert \sqrt{\cov(\empmfl_s)} - \sqrt{\cov(\mfl_s)} \right\rVert_{\rm F}^4 \, \d s.
            \end{align}
    \end{itemize}
    Combining all the bounds,
    we obtain
    \begin{align}
        \notag
        \frac{1}{\revise{\overline \kappa}}
        \left\lvert \xn{j}_t - \xnl{j}_t \right\rvert^2
        &\leq
        \int_{0}^{t}
        \left( \revise{\overline \kappa} \Bigl\lVert \cov(\mu^J_s) - \cov(\empmfl_s) \Bigr\rVert_{\rm F}- \frac{c_1}{\lipphi} \right) \left\lvert \xn{j}_s - \xnl{j}_s \right\rvert \left\lvert \nabla \phi(\xn{j}_s) - \nabla \phi (\xnl{j}_s) \right\rvert \, \d s
        \\ \notag
        &\qquad +  C \int_{0}^{t}  \left\lvert \xn{j}_s - \xnl{j}_s \right\rvert^2 + W_2{\left(\mu^J_s, \empmfl_s\right)}^2
        +  \Bigl\lVert \cov(\mu^J_s) - \cov(\empmfl_s) \Bigr\rVert_{\rm F}^2 \left\lvert \nabla \phi (\xnl{j}_s) \right\rvert^2 \, \d s
        \\ \notag
        &\qquad + C \int_{0}^{t}  \Bigl\lVert \cov(\empmfl_s) - \cov(\mfl_s) \Bigr\rVert_{\rm F}^2
                \left\lvert \nabla \phi (\xn{j}_s) \right\rvert^2
        + \left\lVert \sqrt{\cov(\empmfl_s)} - \sqrt{\cov(\mfl_s)} \right\rVert_{\rm F}^2 \, \d s
        + M^j_t \\
        \label{eq:four_terms_nonlip}
        &=: \mathcal I^1_t + \mathcal I^2_t + \mathcal I^3_t + M^j_t.
    \end{align}
    In view of~\eqref{eq:bound_covariances},
    it holds that
    \[
        \mathcal I^1_{u \wedge \theta_J}
        \leq \left( 2 \revise{\overline{\kappa}} (2R + \varepsilon) \varepsilon - \frac{c_1}{\lipphi} \right) \int_{0}^{u \wedge \theta_J} \left\lvert \xn{j}_s - \xnl{j}_s \right\rvert \left\lvert \nabla \phi(\xn{j}_s) - \nabla \phi (\xnl{j}_s) \right\rvert \, \d s.
    \]
    Let $\varepsilon$ be such that the first factor on the right-hand side is negative.
    Then,
    evaluating both sides of~\eqref{eq:four_terms_nonlip} at the stopping time~$u \wedge \theta_J$,
    raising to the power $\frac{\r}{2}$,
    and taking the supremum \revise{over $[0, t]$},
    we obtain
    \begin{align}
        \notag
        \frac{1}{3^{\frac{\r}{2} - 1}}\sup_{u \in [0, t]} \left\lvert \xn{j}_{u \wedge \theta_J} - \xnl{j}_{u \wedge \theta_J} \right\rvert^{\r}
        &\leq \sup_{u \in [0, t]} \left\lvert \mathcal I^2_{u \wedge \theta_J} \right\rvert^{\frac{\r}{2}}
        + \sup_{u \in [0, t]} \left\lvert \mathcal I^3_{u \wedge \theta_J} \right\rvert^{\frac{\r}{2}}
        + \sup_{u \in [0, t]} \left\lvert M^j_{\revise{u} \wedge \theta_J} \right\rvert^{\frac{\r}{2}} \\
        \label{eq:second_ineq}
        &\leq \left\lvert \mathcal I^2_{t \wedge \theta_J} \right\rvert^{\frac{\r}{2}}
        + \left\lvert \mathcal I^3_{t} \right\rvert^{\frac{\r}{2}}
        + \sup_{u \in [0, t]} \left\lvert M^j_{\revise{u} \wedge \theta_J} \right\rvert^{\frac{\r}{2}}.
    \end{align}
    \revise{We now use that}
    \[
        \left\lvert \mathcal I^2_{t \wedge \theta_J} \right\rvert^{\frac{\r}{2}}
        \leq
        C \int_{0}^{t}  \left\lvert \xn{j}_{s\wedge \theta_J} - \xnl{j}_{s\wedge \theta_J} \right\rvert^{\r} + W_2{\left(\mu^J_{s\wedge \theta_J}, \empmfl_{s\wedge \theta_J}\right)}^{\r}
        +  \Bigl\lVert \cov(\mu^J_{s\wedge \theta_J}) - \cov(\empmfl_{s\wedge \theta_J}) \Bigr\rVert_{\rm F}^{\r} \left\lvert \nabla \phi (\xnl{j}_{s\wedge \theta_J}) \right\rvert^{\r} \, \d s.
    \]
    Taking the expectation in~\eqref{eq:second_ineq},
    then using~\eqref{eq:bound_wasserstein},
    the Burkholder--Davis--Gundy inequality and the optional stopping theorem,
    and finally using the bound~\eqref{eq:quad_var} on the quadratic variation of $M^j$,
    we deduce that
    \begin{align*}
        \expect \left[ \sup_{u \in [0, t]} \left\lvert \xn{j}_{u \wedge \theta_J} - \xnl{j}_{u \wedge \theta_J} \right\rvert^{\r} \right]
        &\leq
        C \int_{0}^{t}  \expect \left\lvert \xn{j}_{s\wedge \theta_J} - \xnl{j}_{s\wedge \theta_J} \right\rvert^{\r}
         + \expect \biggl[ \Bigl\lVert \cov(\mu^J_{s\wedge \theta_J}) - \cov(\empmfl_{s\wedge \theta_J}) \Bigr\rVert_{\rm F}^{\r}
         \left\lvert \nabla \phi (\xnl{j}_{s\wedge \theta_J}) \right\rvert^{\r} \biggr] \, \d s \\
        &\qquad + C \int_{0}^{t}
        \expect \biggl[\Bigl\lVert \cov(\empmfl_s) - \cov(\mfl_s) \Bigr\rVert_{\rm F}^{\r}
            \left\lvert \nabla \phi (\xn{j}_s) \right\rvert^{\r} \biggr]
                + \expect \left\lVert \sqrt{\cov(\empmfl_s)} - \sqrt{\cov(\mfl_t)} \right\rVert_{\rm F}^{2\r} \, \d s.
    \end{align*}
    By exchangeability,
    the Wasserstein stability estimate~\eqref{eq:stab_wcov_simple} in~\cref{lemma:wasserstein_stability_estimates},
    and the definition of the stopping time~$\theta_J$,
    it holds for some $C = C(R, \varepsilon)$ that
    \begin{align}
        \notag
        \expect \biggl[ \Bigl\lVert \cov(\mu^J_{s \wedge \theta_J}) - \cov(\empmfl_{s \wedge \theta_J}) \Bigr\rVert_{\rm F}^{\r} \, \left\lvert \nabla \phi (\xnl{j}_{s \wedge \theta_J}) \right\rvert^{\r} \biggr]
        &= \expect \left[ \Bigl\lVert \cov(\mu^J_{s \wedge \theta_J}) - \cov(\empmfl_{s \wedge \theta_J}) \Bigr\rVert_{\rm F}^{\r} \, \frac{1}{J} \sum_{k=1}^{J} \left\lvert \nabla \phi (\xnl{k}_{s \wedge \theta_J}) \right\rvert^{\r} \right] \\
        \label{eq:difficult_term}
        &\leq C \expect \left[ W_2{\left(\mu^J_{s \wedge \theta_J}, \empmfl_{s \wedge \theta_J}\right)}^{\r} \, \frac{1}{J} \sum_{k=1}^{J} \left\lvert \xnl{k}_{s \wedge \theta_J} \right\rvert_*^{\r(\ell + 1)} \right] \\
        \notag
        &\leq C \expect \left[ W_2{\left(\mu^J_{s \wedge \theta_J}, \empmfl_{s \wedge \theta_J}\right)}^{\r} \right]
        \leq C \expect \left\lvert \xn{j}_{s \wedge \theta_J} - \xnl{j}_{s \wedge \theta_J} \right\rvert^{\r}.
    \end{align}
    Note that, \revise{to exploit the definition of the stopping time $\mu_J$},
    it was important to have $\xnl{k}$ in the sum in~\eqref{eq:difficult_term}, and not $\xn{k}$.
    On the other hand,
    by the moment bounds in~\cref{proposition:moment_estimates,proposition:well-posedness} and by~\cref{lemma:convergence_covariance_iid},
    it holds that
    \begin{align*}
        \expect \left\lVert \sqrt{\cov(\empmfl_s)} - \sqrt{\cov(\mfl_s)} \right\rVert_{\rm F}^{\r}
        &\leq C J^{-\frac{\r}{2}}, \\
        \expect \left[ \left\lVert \cov(\empmfl_s) - \cov(\mfl_s) \right\rVert_{\rm F}^{\r} \left\lvert \nabla \phi (\xn{j}_s) \right\rvert^\r \right]
        &\leq
        \left( \expect  \left\lVert \cov(\empmfl_s) - \cov(\mfl_s) \right\rVert_{\rm F}^{2\r}  \right)^{\frac{1}{2}} \left( \expect \left\lvert \nabla \phi (\xn{j}_s) \right\rvert^{2\r} \right)^{\frac{1}{2}}
        \leq C J^{-\frac{\r}{2}}.
    \end{align*}
    Therefore, we obtain that
    \begin{align*}
        \expect \left[ \sup_{u \in [0, t]} \left\lvert \xn{j}_{u \wedge \theta_J} - \xnl{j}_{u \wedge \theta_J} \right\rvert^{\r} \right]
        &\leq  C J^{-\frac{\r}{2}} + C \int_{0}^{t}  \expect \left\lvert \xn{j}_{s \wedge \theta_J} - \xnl{j}_{s \wedge \theta_J} \right\rvert^{\r} \, \d s.
    \end{align*}
    By Gr\"onwall's inequality,
    this implies that
    \begin{equation}
        \label{eq:gronwall_estimate}
        \expect \left[ \sup_{u \in [0, T]} \left\lvert \xn{j}_{u \wedge \theta_J} - \xnl{j}_{u \wedge \theta_J} \right\rvert^{\r} \right]
        \leq C J^{-\frac{\r}{2}}.
    \end{equation}

    \paragraph{Step B. Bounding the second term in~\eqref{eq:main_equation_nonlip}}
    We have by~\cref{proposition:well-posedness,proposition:moment_estimates} that
    \[
        \expect \left[ \sup_{t\in[0,T]} \abs[\big]{\xn{j}_t - \xnl{j}_t}^{q} \right]
        \leq 2^{q-1} \left( \expect \left[ \sup_{t\in[0,T]} \abs[\big]{\xn{j}_t}^{q} \right]
        + \expect \left[ \sup_{t\in[0,T]} \abs[\big]{\xnl{j}_t}^{q} \right] \right)
        \leq 2^{q-1} \bigl( \kappa(q) + \overline \kappa(q) \bigr).
    \]
    In order to complete the proof of the theorem,
    it remains to bound the probability $\proba \left[\theta_J(R) \leq T \right]$.
    By using the same strategy as in~\eqref{eq:bound_second_big_term} in the proof of~\cref{theorem:mfl},
    it is simple to show that
    there is $C$ such that
    \[
        \proba \left[ \overline \tau_J \leq T \right]
        \leq C J^{-\frac{q}{2 r(\ell + 1)}}.
    \]
    On the other hand, by the Markov inequality and~\eqref{eq:gronwall_estimate},
    it holds that
    \begin{align*}
        \proba \left[ \mathfrak t_J \leq T \leq \overline \tau_J \right]
        &= \proba \left[ \sup_{t \in [0, T]} W_{\r}{\left(\mu^J_{t \wedge \theta_J}, \empmfl_{t \wedge \theta_J}\right)} \geq \varepsilon \right]
        \leq \proba \left[ \sup_{t \in [0, T]}  \frac{1}{J} \sum_{j=1}^{J} \left\lvert \xn{j}_{t \wedge \theta_J} - \xnl{j}_{t \wedge \theta_J} \right\rvert^{\r}  \geq \varepsilon^r \right] \\
        &\leq \proba \left[  \frac{1}{J} \sum_{j=1}^{J} \sup_{t \in [0, T]}  \left\lvert \xn{j}_{t \wedge \theta_J} - \xnl{j}_{t \wedge \theta_J} \right\rvert^{\r}  \geq \varepsilon^r \right]
        \leq \frac{1}{\varepsilon^r} \expect \left[ \sup_{t \in [0, T]}  \left\lvert \xn{j}_{t \wedge \theta_J} - \xnl{j}_{t \wedge \theta_J} \right\rvert^{\r} \right]
        \leq \frac{C}{\varepsilon^r} J^{-{\frac{r}{2}}}.
    \end{align*}
    Given the definitions of $p$ and $q$,
    it follows that
    \[
        \Bigl( \proba [\theta_J \leq T]\Bigr)^{\frac{q-p}{q}}
        \leq C \Bigl(J^{-\frac{q}{2 r(\ell + 1)}} + J^{-\frac{\r}{2}}\Bigr)^{\frac{q-p}{q}}
        = C \Bigl(J^{-p} + J^{-p}\Bigr)^{1 - \frac{1}{4p(\ell + 1)}}
        \leq C J^{-\frac{p}{2}}.
    \]
    The proof can then be concluded in exactly the same manner as the proof of~\cref{theorem:mfl}.
\end{proof}
\begin{remark}
    [Non-quantitative propagation of chaos]
    \label{remark:nonquant}
    A natural alternative approach would have been to define stopping times
    similar to those employed in the proof of~\cref{theorem:mfl},
    \begin{equation}
        \label{eq:non_quant}
        \tau_J(r, R) = \inf \Bigl\{ t \geq 0 : W_{\r(\ell + 1)}(\emp_t, \delta_0) \geq R \Bigr\},
        \qquad
        \overline \tau_J(r, R) = \inf \Bigl\{ t \geq 0 : W_{\r(\ell + 1)}(\empmfl_t, \delta_0) \geq R \Bigr\},
    \end{equation}
    and to let $\theta_J(r, R) = \tau_J(r, R) \wedge \overline \tau_J(r, R)$.
    This remark aims at explaining why,
    although this approach does enable proving a non-quantitative propagation of chaos estimate,
    it does not yield a \revise{good} quantitative result.

    With the definition~\eqref{eq:non_quant} of the stopping times,
    the compensation that we relied on to control the first term in~\eqref{eq:four_terms_nonlip} can no longer be applied.
    This leads to the presence,
    among the terms needing to be controlled,
    of a term similar to~\eqref{eq:difficult_term} but containing $\xn{k}_{s \wedge \theta_J}$ instead of $\xnl{k}_{s \wedge \theta_J}$.
    This is not a problem,
    as this term can be handled by definition of~$\theta_J$ in terms of the stopping times in~\eqref{eq:non_quant}.
    Consequently,
    the estimate~\eqref{eq:gronwall_estimate} still holds,
    with a constant~$C$ depending on~$(r,R)$.
    The issue with this approach is to obtain a sufficiently good bound on~$\proba [\tau_J(R) \leq T]$ in the last part of the proof.
    Indeed, while it is still possible to show,
    similarly to~\eqref{eq:auxiliary_bound_proba},
    that
    \[
        \proba [\tau_J \leq T]
        \leq
        \frac{2^{\beta}}{R^{\beta}} \expect \left[ \frac{1}{J}\sum_{j=1}^J \sup_{t\in[0,T]} \Bigl\lvert \xn{j}_{t\wedge \theta_J} - \xnl{j}_{t\wedge \theta_J} \Bigr\rvert^{\beta} \right]
        + \proba \left[\frac{1}{J}\sum_{j=1}^J \sup_{t\in[0,T]} \Bigl\lvert \xnl{j}_t \Bigr\rvert^{\beta} \geq \frac{R^{\beta}}{2^{\beta}} \right], \qquad \beta = r(\ell + 1),
    \]
    the first term on the right-hand side of this inequality cannot be controlled from the estimate~\eqref{eq:gronwall_estimate} on the stopped particle system,
    unless $\beta = r$, which is the globally Lipschitz setting.
    A crude alternative manner of bounding $\proba [\tau_J \leq T]$ is to use the Markov inequality,
    which together with the moment bound in~\cref{proposition:moment_estimates} gives that
    \begin{align*}
        \proba \left[ \tau_J(r, R) \leq T \right]
        = \proba \left[ \sup_{t \in [0, T]} \frac{1}{J} \sum_{j=1}^{J} \left\lvert \xn{j}_t \right\rvert^\beta \geq R^\beta \right]
        &\leq \proba \left[ \frac{1}{J} \sum_{j=1}^{J} \sup_{t \in [0, T]} \left\lvert \xn{j}_t \right\rvert^\beta \geq R^\beta \right] \\
        &\leq \frac{1}{R^\beta}\expect \left[ \frac{1}{J} \sum_{j=1}^{J} \sup_{t \in [0, T]} \left\lvert \xn{j}_t \right\rvert^\beta \right]
        = \frac{1}{R^\beta}\expect \left[ \sup_{t \in [0, T]} \left\lvert \xn{1}_t \right\rvert^\beta \right] \leq \frac{\kappa(\beta)}{R^\beta}.
    \end{align*}
    A non-quantitative version of propagation can then be derived as follows.
    Take $\varepsilon > 0$ and
    fix $R > 0$ so that the second term in~\eqref{eq:main_equation_nonlip},
    with the stopping times given in~\eqref{eq:non_quant},
    is bounded from above by~$\frac{\varepsilon}{2}$.
    This is possible in view of the crude bound on $\proba \left[ \tau_J(r, R) \leq T \right]$ above,
    together with a similar bound for $\proba \left[ \overline \tau_J(r, R) \leq T \right]$.
    Then fix~$J$ sufficiently large such that the first term in~\eqref{eq:main_equation_nonlip} is also bounded from above by~$\frac{\varepsilon}{2}$,
    which is possible by~\eqref{eq:gronwall_estimate}.
    Since $\varepsilon$ was arbitrary,
    it follows that
    \[
        \lim_{J \to \infty}
        \expect \left[ \sup_{t\in[0,T]} \abs[\Big]{\xn{j}_t-\xnl{j}_t}^r \right]
        = 0,
    \]
    which is indeed a non-quantitative propagation of chaos estimate.
    Note that this approach of obtaining a non-quantitative estimate
    parallels that in~\cite[Theorem 2.2]{MR1949404}.
\end{remark}

\subsection{Corollary: bound on the sampling error}
\label{sub:corollary}
To conclude this section,
we mention the following corollary of~\cref{theorem:mfl,theorem:mfl_nonlip},
which is similar to~\cite[Theorem 2]{MR4199469} and useful in the context of sampling.
It states that, for the purpose of calculating the average of an observable~$f\colon \real^d \to \real$
with respect to~$\mfl_t$,
the interacting particle system at time~$t$ is as good,
in terms of convergence rate of \revise{the~$L^p$} approximation error with respect to~$J$,
as an estimator constructed from $J$~i.i.d.\ samples from~$\mfl_t$.

\begin{corollary}
    [$L^p$ bound on the sampling error]
    Suppose that $\phi \in \mathcal A(\ell)$ for some $\ell > 0$,
    that $\mfl_0 \in \mathcal P_q(\real^d)$ for all~$q > 0$ and that $\cov(\mfl_0) \succ 0$.
    Assume additionally that~$f \colon \real^d \to \real$ satisfies the following local Lipschitz continuity assumption:
    \begin{align}
        \label{eq:lip_corollary}
        \forall x, y \in \R^d, \qquad
        \bigl\lvert f(x)- f(y) \bigr\rvert
        \le L \Bigl( 1 + |x|^{s} + |y|^{s} \Bigr) |x-y|.
    \end{align}
    Consider the systems~\eqref{eq:system_rewritten} and \eqref{eq:coupling-system-mfl} with the coefficients given in~\eqref{eq:eks-drift-and-diff}.
    Then for any $p \geq \revise{2}$,
    there is $C > 0$ depending on $(p, L, s)$ and a finite number of moments of $\mfl_0$ such that
    \[
        \left( \expect  \left\lvert \frac{1}{J} \sum_{j=1}^{J} f(\xn{j}_t) - \mfl_t[f] \right\rvert^p \right)^{\frac{1}{p}}
        \leq
        C J^{-\frac{1}{2}}.
    \]
\end{corollary}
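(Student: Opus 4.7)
The plan is to insert the synchronously coupled i.i.d.\ mean field particles from~\eqref{eq:coupling-system-mfl} and apply Minkowski's inequality to split the sampling error into a propagation of chaos contribution and a Monte Carlo contribution:
\begin{equation*}
    \frac{1}{J} \sum_{j=1}^{J} f(\xn{j}_t) - \mfl_t[f]
    = \underbrace{\frac{1}{J} \sum_{j=1}^{J} \Bigl( f(\xn{j}_t) - f(\xnl{j}_t) \Bigr)}_{\text{(I)}}
    + \underbrace{\left( \frac{1}{J} \sum_{j=1}^{J} f(\xnl{j}_t) - \mfl_t[f] \right)}_{\text{(II)}}.
\end{equation*}
The strategy is then to bound each of these two terms separately in $L^p(\Omega)$ by $C J^{-1/2}$.

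For term (I), I would use the convexity of $x \mapsto \lvert x \rvert^p$ and exchangeability of the particle system to reduce to a single-particle estimate,
\begin{equation*}
    \expect \bigl\lvert \text{(I)} \bigr\rvert^p
    \leq \frac{1}{J} \sum_{j=1}^{J} \expect \bigl\lvert f(\xn{j}_t) - f(\xnl{j}_t) \bigr\rvert^p
    = \expect \bigl\lvert f(\xn{1}_t) - f(\xnl{1}_t) \bigr\rvert^p,
\end{equation*}
then apply the local Lipschitz bound~\eqref{eq:lip_corollary} and Cauchy--Schwarz to obtain
\begin{equation*}
    \expect \bigl\lvert f(\xn{1}_t) - f(\xnl{1}_t) \bigr\rvert^p
    \leq L^p \left( \expect \Bigl(1 + \lvert \xn{1}_t \rvert^s + \lvert \xnl{1}_t \rvert^s\Bigr)^{2p} \right)^{\frac{1}{2}}
    \left( \expect \bigl\lvert \xn{1}_t - \xnl{1}_t \bigr\rvert^{2p} \right)^{\frac{1}{2}}.
\end{equation*}
The first factor is finite and uniformly bounded in $J$ by the moment estimates in~\cref{proposition:moment_estimates,proposition:well-posedness}, which are available because $\mfl_0 \in \mathcal P_q(\real^d)$ for every $q > 0$. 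The second factor, by~\cref{theorem:mfl_nonlip} applied with exponent $2p$, is bounded by $C J^{-p/2}$. Taking the $p$-th root gives $\bigl(\expect \lvert \text{(I)} \rvert^p\bigr)^{1/p} \leq C J^{-1/2}$.

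For term (II), the random variables $Y_j := f(\xnl{j}_t) - \mfl_t[f]$ are i.i.d., centred, and admit finite moments of every order, since $f$ has polynomial growth by~\eqref{eq:lip_corollary} and $\mfl_t \in \mathcal P_q(\real^d)$ for all $q > 0$ by the moment bound in~\cref{proposition:well-posedness}. The Marcinkiewicz--Zygmund inequality therefore yields
\begin{equation*}
    \expect \bigl\lvert \text{(II)} \bigr\rvert^p
    \leq C_{\rm MZ}(p) \, J^{-p/2} \, \expect \bigl\lvert Y_1 \bigr\rvert^p
    \leq C J^{-p/2},
\end{equation*}
so $\bigl(\expect \lvert \text{(II)} \rvert^p\bigr)^{1/p} \leq C J^{-1/2}$ as well. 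Combining the two bounds by Minkowski's inequality concludes the proof. The argument is largely routine given~\cref{theorem:mfl_nonlip}; the only point requiring care is bookkeeping the moments needed to absorb the polynomial prefactor $1 + \lvert x \rvert^s + \lvert y \rvert^s$ from~\eqref{eq:lip_corollary}, and this is harmless because $\mfl_0$ (and hence $\xn{1}_t$, $\xnl{1}_t$, and $\mfl_t$) has finite moments of every order.
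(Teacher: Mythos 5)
Your proof is correct and follows essentially the same path as the paper's: the same two-term decomposition via the synchronously coupled i.i.d.\ particles, with Jensen plus exchangeability and Cauchy--Schwarz reducing the chaos term to \cref{theorem:mfl_nonlip}, and Marcinkiewicz--Zygmund handling the Monte Carlo term. The only cosmetic difference is that the paper expands $(1+|x|^s+|y|^s)^{2p}$ using the elementary power bound before invoking the moment estimates, whereas you leave the prefactor unexpanded, which is equally valid.
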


\begin{proof}
    By the triangle inequality,
    it holds that
    \begin{equation}
        \label{eq:sampling_error}
        \left( \expect  \left\lvert \frac{1}{J} \sum_{j=1}^{J} f(\xn{j}_t) - \mfl_t[f] \right\rvert^p \right)^{\frac{1}{p}}
        \leq
        \left( \expect  \left\lvert \frac{1}{J} \sum_{j=1}^{J} \Bigl(f\bigl(\xn{j}_t\bigr) - f\bigl(\xnl{j}_t\bigr) \Bigr) \right\rvert^p \right)^{\frac{1}{p}}
        + \left( \expect  \left\lvert \frac{1}{J} \sum_{j=1}^{J} f(\xnl{j}_t) - \mfl_t[f] \right\rvert^p \right)^{\frac{1}{p}}.
    \end{equation}
    By Jensen's inequality,
    exchangeability,
    the local Lipschitz continuity of $f$,
    the Cauchy--Schwarz inequality,
    the \revise{moment bounds~\eqref{eq:moment_bounds_2} and~\eqref{eq:moment_bound_mfl}},
    and~\cref{theorem:mfl_nonlip},
    the first term satisfies
    \begin{align*}
        \left( \expect  \left\lvert \frac{1}{J} \sum_{j=1}^{J} \Bigl(f\bigl(\xn{j}_t\bigr) - f\bigl(\xnl{j}_t\bigr) \Bigr) \right\rvert^p \right)^{\frac{1}{p}}
        &\leq \left( \expect \left[  \frac{1}{J} \sum_{j=1}^{J} \Bigl\lvert f\bigl(\xn{j}_t\bigr) - f\bigl(\xnl{j}_t\bigr) \Bigr\rvert^p  \right]     \right)^{\frac{1}{p}}
        = \left( \expect  \left\lvert f\bigl(\xn{1}_t\bigr) - f\bigl(\xnl{1}_t\bigr)  \right\rvert^p \right)^{\frac{1}{p}} \\
        &\leq
        L \left(3^{\revise{2p-1}}\expect\left[1 + \left\lvert \xn{1}_t \right\rvert^{2ps} + \bigl\lvert \xnl{1}_t \bigr\rvert^{2ps} \right] \right)^{\frac{1}{2p}}
        \left( \expect  \left\lvert \xn{1}_t - \xnl{1}_t  \right\rvert^{2p} \right)^{\frac{1}{2p}}
        \leq C J^{- \frac{1}{2}}.
    \end{align*}
    By the Marcinkiewicz--Zygmund inequality and the moment bound in~\cref{proposition:well-posedness},
    the second term on the right-hand side of~\eqref{eq:sampling_error} also tends to 0 at the classical Monte Carlo rate~$J^{-\frac{1}{2}}$,
    which concludes the proof.
\end{proof}

\appendix

\section{Proof of well-posedness and moment bounds}

We present first the proof of well-posedness for the interacting particle system in~\cref{sub:proof_moment_bounds},
then auxiliary results in~\cref{sub:aux_wellposed},
and finally the proof of well-posedness for the mean field dynamics,
relying on these auxiliary results, in~\cref{sub:proof_wellposed}.
In several proofs in this section,
we \revise{use} that if $\phi \in \mathcal A(\ell)$ and $f(x) = \phi(x)^q$ for $q > 0$,
then by the upper bounds in~\cref{assumption:main} (and the lower bound on $\phi$ if $0 < q < \revise{2}$),
it holds that
\begin{align}
    \nonumber
    \forall x \in \real^d, \qquad
    \bigl\lVert \hessian f(x) \bigr\rVert_{\rm F}
    &=  \Bigl\lVert q(q-1)  \phi(x)^{q-2} \nabla \phi(x) \otimes \nabla \phi(x) + q  \phi(x)^{q-1} \hessian \phi(x)   \Bigr\rVert_{\rm F}   \\
    \label{eq:aux_hessian}
    &\leq C \lvert x \rvert_*^{(q-2)(\ell+2) + 2(\ell +1)} + C \lvert x \rvert_*^{(q-1)(\ell+2) + \ell}
    = 2 C \lvert x \rvert_*^{q(\ell+2) - 2}.
\end{align}
Recall that $\phi$ is bounded from below by 1 by~\cref{assumption:main},
so the function $f$ is indeed well-defined and twice differentiable.

\label{sec:well-posed}

\subsection{\texorpdfstring{Proof of~\cref{proposition:moment_estimates}}{Proof of auxiliary lemma on moment estimates}}
\label{sub:proof_moment_bounds}
We first prove well-posedness,
and then the moment bound~\eqref{eq:moment_bounds_2}.

\paragraph{Step 1. Proving well-posedness of the interacting particle system}
This part is similar to the proof of~\cite[Proposition 4.4]{MR4123680},
but slightly simpler because we do not need to prove nondegeneracy of the empirical covariance.
Let $\mathcal L$ denote the generator of the interacting particle system
and let $\X \in \real^{dJ}$ be the collection $(\xn{1}, \dotsc, \xn{J})$.
Fix~$j \in \range{1}{J}$ and \revise{$q = \frac{p}{\ell + 2}$} and let~$\mathscr V$ denote the Lyapunov functional
\[
    \mathscr V(\X) = \frac{1}{J}\sum_{j=1}^{J} \phi(\xn{j})^q.
\]
Let also $f(x) = \phi(x)^q$ and $\mu^J = \frac{1}{J} \sum_{j=1}^{J} \delta_{\xn{j}}$.
It holds that
\begin{align*}
    \mathcal L \mathscr V(\X)
    &= \frac{1}{J} \sum_{j=1}^{J} \Bigl( - \nabla \phi(\xn{j})^\t \cov(\mu^J)  \nabla_{j} \mathscr V(\X)
    + \cov(\mu^J) : \hessian_j \mathscr V(\X) \Bigr) \\
    &=  \frac{1}{J}\sum_{j=1}^{J} \Bigl( - q \phi(\xn{j})^{q-1} \nabla \phi(\xn{j})^\t \cov(\mu^J)  \nabla \phi(\xn{j})
    + \cov(\mu^J) : \hessian f(\xn{j}) \Bigr)
    \leq \frac{1}{J} \sum_{j=1}^{J} \cov(\mu^J) : \hessian f(\xn{j}),
\end{align*}
where $\nabla_j$ denotes the gradient with respect to $\xn{j}$.
Using~\eqref{eq:aux_hessian},
we deduce that
\begin{align*}
    \mathcal L \mathscr V(\X)
    &\leq C \Bigl\lVert \cov(\mu^J) \Bigr\rVert_{\rm F} \biggl( \frac{1}{J} \sum_{j=1}^{J} \bigl\lvert \xn{j} \bigr\rvert_*^{q(\ell + 2) - 2} \biggr)
    \leq C \biggl(\frac{1}{J} \sum_{k=1}^{J} \bigl\lvert \xn{j} \bigr\rvert_*^{2}\biggr) \biggl(\frac{1}{J} \sum_{j=1}^{J} \bigl\lvert \xn{j} \bigr\rvert_*^{q(\ell + 2) - 2} \biggr) \\
    &= C \left(\int_{\real^d} |x|_*^2 \, \mu^J(\d x) \right) \left(\int_{\real^d} \bigl\lvert x \bigr\rvert_*^{q(\ell + 2) - 2} \, \mu^J(\d x) \right)
    \leq C \int_{\real^d} \bigl\lvert x \bigr\rvert_*^{q(\ell + 2)} \, \mu^J(\d x),
\end{align*}
where in the last \revise{bound, we used Jensen's inequality:
\[
    \left(\int_{\real^d} |x|_*^2 \, \mu^J(\d x) \right) \left(\int_{\real^d} \bigl\lvert x \bigr\rvert_*^{q(\ell + 2) - 2} \, \mu^J(\d x) \right)
    \leq \left(\int_{\real^d} |x|_*^{q(\ell + 2)} \, \mu^J(\d x) \right)^{\frac{2}{q(\ell + 2)}}
    \left(\int_{\real^d} |x|_*^{q(\ell + 2)} \, \mu^J(\d x) \right)^{\frac{q(\ell + 2) - 2}{q(\ell + 2)}}.
\]
}
Using~\eqref{eq:assumption_allx},
we conclude that
\begin{equation*}
    \label{eq:lyapunov_generator}
    \forall \X \in \real^{dJ}, \qquad
    \mathcal L \mathscr V(\X)
    \leq C \mathscr V(\X).
\end{equation*}
From this \revise{inequality},
it then follows from~\cite[Theorem 3.5]{khasminskii2013stochastic},
see also~\cite[Theorem 2.1]{MR1234295},
that there exists a unique strong globally-defined solution to the interacting particle system if $\mfl_0 \in \mathcal P_p(\real^d)$,
and that furthermore
\begin{equation}
    \label{eq:khasminskii_bound}
    \sup_{t \in [0, T]} \expect \bigl[\mathscr V(\X_t)\bigr]
    \leq \revise{\exp \left( C T\right) \expect \bigl[\mathscr V(\X_0)\bigr]}
    \leq \revise{\exp \left( C T\right) \u^q \expect \left\lvert \xn{1}_0 \right\rvert_*^{p}}
    < \infty.
\end{equation}

\paragraph{Step 2. Proving the moment bound~\eqref{eq:moment_bounds_2}}
Fix $j \in \range{1}{J}$ and let $f(x) = \phi(x)^q$,
\revise{for $q = \frac{p}{2(\ell + 2)}$}.
By It\^o's formula and a reasoning similar to above,
it holds that
\[
    f(\xn{j}_t)
    \leq f(\xn{j}_0) + \int_{0}^{t} \cov(\mu^J_s) : \hessian f(\xn{j}_s) \, \d s
    + \int_{0}^{t} \sqrt{2 \cov(\mu^J_s)} \nabla f(\xn{j}_s) \, \d \wn{j}_s.
\]
Taking the square and the supremum,
then taking the expectation and using the Burkholder--Davis--Gundy inequality,
we obtain that
\begin{align}
    \notag
    \frac{1}{3} \expect \left[ \sup_{s \in [0, t]} \bigl\lvert f(\xn{j}_s) \bigr\rvert^2 \right]
    &\leq \expect  \bigl\lvert f(\xn{j}_0) \bigr\rvert^2
    + CT \int_{0}^{t} \revise{\Bigl\lvert \cov(\mu^J_s) : \hessian f(\xn{j}_s) \Bigr\rvert^{2}} \, \d s \\
    &\qquad \qquad
    + 2C_{\rm BDG} \int_{0}^{t} \expect \Bigl\lvert \nabla f(\xn{j}_s)^\t \cov(\mu^J_s) \nabla f(\xn{j}_s) \Bigr\rvert \, \d s.
    \label{eq:bdg_ips}
\end{align}
\revise{%
    By~\eqref{eq:aux_hessian} and H\"older's inequality (recall that $p \geq 4$),
    it holds that
    \begin{align*}
        \expect \Bigl\lvert \cov(\mu^J_s) : \hessian f(\xn{j}_s) \Bigr\rvert^{2}
        &\leq
        C\expect \Bigl[ \bigl\lVert \cov(\mu^J_s) \bigr\rVert_{\rm F}^{2} \left\lvert \xn{j}_s \right\rvert_*^{2q(\ell + 2) - 4} \Bigr] \\
        &\leq
        C\expect \Bigl[ \bigl\lVert \cov(\mu^J_s) \bigr\rVert_{\rm F}^{q(\ell + 2)} \Bigr]^{\frac{4}{2q(\ell + 2)}} \, \expect \Bigl[ \left\lvert \xn{j}_s \right\rvert_*^{2q(\ell + 2)} \Bigr]^{\frac{2q(\ell + 2) - 4}{2q(\ell + 2)}}
        \leq
        C \expect \Bigl[ \left\lvert \xn{j}_s \right\rvert_*^{2q(\ell + 2)} \Bigr].
    \end{align*}
    In the last inequality, we used that
    $\cov(\mu^J_s) \preccurlyeq \frac{1}{J} \sum_{j=1}^{J} \xn{j}_s \otimes \xn{j}_s$ and $\lVert x \otimes x \rVert_{\rm F} = |x|^2$,
    so by Jensen's inequality and exchangeability it holds that
    \[
        \expect \Bigl[ \bigl\lVert \cov(\mu^J_s) \bigr\rVert_{\rm F}^{q(\ell + 2)} \Bigr]
        \leq \expect  \Biggl[ \biggl(\frac{1}{J} \sum_{j=1}^{J} \left\lvert \xn{j}_s \right\rvert^2 \biggr)^{q(\ell + 2)} \Biggr]
        \leq \expect  \Biggl[ \frac{1}{J} \sum_{j=1}^{J} \left\lvert \xn{j}_s \right\rvert^{2q(\ell + 2)} \Biggr]
        = \expect \left\lvert \xn{j}_s \right\rvert_*^{2q(\ell + 2)}.
    \]
Likewise,} since $\bigl\lvert \nabla f(x) \bigr\rvert \leq C  |x|_*^{q(\ell + 2) - 1}$ for all $x \in \real^d$ by~\eqref{assump:grad_bounds},
\revise{we have using the same reasoning that}
\begin{align}
    \nonumber
    \expect \Bigl\lvert \nabla f(\xn{j}_s)^\t \cov(\mu^J_s) \nabla f(\xn{j}_s) \Bigr\rvert
    &
    \revise{\leq \expect \left[ \bigl\lVert \cov(\mu^J_s) \bigr\rVert_{\rm F} \left\lvert \xn{j}_s \right\rvert_*^{2q(\ell + 2) - 2} \right] }
    \leq C\expect \lvert \xn{j}_s \rvert_*^{2q(\ell + 2)}.
\end{align}
Substituting this bound in~\eqref{eq:bdg_ips} and using~\eqref{assump:function_bounds},
we deduce that
\[
    \expect \left[ \sup_{s \in [0, t]} \bigl\lvert \xn{j}_s \bigr\rvert_*^{2q(\ell + 2)} \right]
    \leq \expect \left[ \bigl\lvert \xn{j}_0 \bigr\rvert_*^{2q(\ell + 2)} \right]
    + C \int_{0}^{t} \expect \bigl\lvert \xn{j}_s \bigr\rvert_*^{2q(\ell + 2)} \, \d s.
\]
The moment bound~\eqref{eq:moment_bounds_2} then follows from~\eqref{eq:khasminskii_bound},
or from an application of Gr\"onwall's inequality.

\subsection{Auxiliary lemmas to establish well-posedness of the mean field dynamics}
\label{sub:aux_wellposed}

In the following, we endow the vector space $\mathcal X := \R^{d \times d}$ with the Frobenius norm,
and we let $\mathcal{R}\colon \R^{d\times d} \rightarrow \R^{d\times d}$ be the map defined by $\mathcal R(\Gamma) = \sqrt{\Gamma \Gamma^\t}$.
We prove auxiliary lemmas in this section,
and postpone the proof of~\cref{proposition:well-posedness} to~\cref{sub:proof_wellposed}.

\begin{lemma}
    \label{lemma:cont-moments}
    Suppose that~$\phi \in \mathcal A(\ell)$ and that $\mfl_0 \in \mathcal P_{p}(\real^d)$ for $p \geq \ell + 2$.
    Fix $\Gamma \in \cont([0,T], \mathcal X)$ and $y_0 \sim \mfl_0$.
    Then there is a unique strong solution~$Y\in \cont([0,T], \R^d)$ to the stochastic differential equation
    \begin{align}
        \label{eq:aux_lemma_SDE}
        \d Y_t = - \mathcal R (\Gamma_t) \nabla \phi(Y_t) \, \d t + \sqrt{2\mathcal R (\Gamma_t)} \, \d W_t,
        \qquad Y_0 = y_0\,,
    \end{align}
    In addition, the function $\mathcal K \colon [0, T] \to \mathcal X$ given by $\mathcal K(t) = \cov(\rho_t)$,
    where $\rho_t = {\rm Law}(Y_t)$,
    belongs to $\cont^1([0, T], \mathcal X)$,
    and there is~$\sigma > 0$ depending on  $\lVert \Gamma \rVert_{\cont\left([0, T], \mathcal X\right)}$ such that
    \begin{equation}
        \label{eq:bound_derivative_coviariance}
        \lVert \mathcal K \rVert_{\cont^1\bigl([0, T], \mathcal X\bigr)}
        \leq
        \sigma.
    \end{equation}
\end{lemma}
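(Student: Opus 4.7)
The plan is to treat the three assertions in order: (i) global existence and uniqueness of a strong solution together with a $p$-th moment bound, (ii) derivation of a closed expression for $\mathcal K(t)$ and $\mathcal M(\rho_t)$ that exposes their time derivatives, and (iii) the continuity of those derivatives and the estimate~\eqref{eq:bound_derivative_coviariance}.

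First, continuity of the matrix square root on the cone of positive semidefinite matrices implies that $t \mapsto \mathcal R(\Gamma_t)$ and $t \mapsto \sqrt{2\mathcal R(\Gamma_t)}$ are continuous on $[0, T]$, hence uniformly bounded by a constant depending only on $\lVert \Gamma \rVert_{\cont([0, T], \mathcal X)}$. The drift $y \mapsto -\mathcal R(\Gamma_t)\nabla \phi(y)$ is then locally Lipschitz in $y$ uniformly on $[0, T]$ by~\eqref{eq:assump-f:lip-growth-gradient}, and the diffusion does not depend on $y$. Classical theory furnishes a unique strong solution up to an explosion time. To preclude explosion and secure a $p$-th moment bound, I would apply It\^o's formula to the Lyapunov functional $V(y) = \phi(y)^q$ with $q = p/(\ell + 2)$, following the strategy in the proof of~\cref{proposition:moment_estimates}. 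By~\eqref{eq:aux_hessian}, the time-dependent generator satisfies
\[
    \mathcal L_t V(y)
    = -q \phi(y)^{q-1}\nabla \phi(y)^\t \mathcal R(\Gamma_t) \nabla \phi(y) + \mathcal R(\Gamma_t):\hessian V(y)
    \leq C(\Gamma)\bigl(1 + V(y)\bigr),
\]
since the first summand is non-positive and the second is dominated via~\eqref{eq:aux_hessian} and the lower bound $\phi(y) \geq \widetilde \l |y|_*^{\ell + 2}$. Gr\"onwall's lemma and $V(y) \geq c |y|^p$ then yield $\sup_{t \in [0, T]} \expect |Y_t|^p \leq C(\Gamma, p, \mfl_0)$ and rule out explosion.

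Second, I would apply It\^o's formula componentwise to $Y_t \otimes Y_t$ and take expectation; the martingale part is a true martingale thanks to the moment bound and the uniform bound on $\sqrt{\mathcal R(\Gamma_s)}$. This produces
\[
    \expect[Y_t \otimes Y_t]
    = \expect[Y_0 \otimes Y_0]
    + \int_0^t \Bigl({-}\expect\bigl[Y_s \otimes \mathcal R(\Gamma_s) \nabla \phi(Y_s)\bigr]
    - \expect\bigl[\mathcal R(\Gamma_s) \nabla \phi(Y_s) \otimes Y_s\bigr]
    + 2\mathcal R(\Gamma_s)\Bigr) \, \d s,
\]
together with $\mathcal M(\rho_t) = \expect[Y_0] - \int_0^t \mathcal R(\Gamma_s) \expect[\nabla \phi(Y_s)]\, \d s$. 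Both integrands are continuous in $s$: continuity of $\Gamma$, the a.s.\ continuity of the sample paths of $Y$, the moment bound, and the growth estimate $|\nabla \phi(y)| \leq \widetilde \u |y|_*^{\ell + 1}$ let one apply dominated convergence. Hence $t \mapsto \expect[Y_t \otimes Y_t]$ and $t \mapsto \mathcal M(\rho_t)$ lie in $\cont^1([0, T], \mathcal X)$, and so does $\mathcal K(t) = \expect[Y_t \otimes Y_t] - \mathcal M(\rho_t) \otimes \mathcal M(\rho_t)$.

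Third, the explicit expressions for $\mathcal K(t)$ and $\mathcal K'(t)$ coming from the previous step, combined with the growth bound on $\nabla \phi$ and the moment estimate, yield an upper bound on $\lVert \mathcal K \rVert_{\cont^1([0, T], \mathcal X)}$ that depends only on $\lVert \Gamma \rVert_{\cont([0, T], \mathcal X)}$, the parameters in~\cref{assumption:main}, and the moments of $\mfl_0$, which is precisely~\eqref{eq:bound_derivative_coviariance}. The main technical obstacle in this programme is verifying enough integrability to legitimately take expectation in It\^o's formula and to invoke dominated convergence; both reduce to the finiteness of $\expect |Y_s|^{\ell + 2}$, which is ensured by the hypothesis $p \geq \ell + 2$ on $\mfl_0$ together with the moment bound from the first step.
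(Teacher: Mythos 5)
Your proposal follows essentially the same route as the paper: a Lyapunov function of the form $\phi^q$ to establish non-explosion and moment control, It\^o's formula applied to $Y_t$ and $Y_t \otimes Y_t$ to obtain integral representations of $\mathcal M(\rho_t)$ and $\expect[Y_t \otimes Y_t]$, dominated convergence to upgrade to $\cont^1$ regularity, and the growth bound $|\nabla\phi(y)| \leq \widetilde\u|y|_*^{\ell+1}$ to close the estimate~\eqref{eq:bound_derivative_coviariance}. One small imprecision: your Gr\"onwall step as written yields only $\sup_{t\leq T}\expect|Y_t|^p \leq C$, whereas the dominated-convergence step needs a dominating function in $L^1(\Omega)$, i.e.\ the maximal bound $\expect\bigl[\sup_{t\leq T}|Y_t|^{\ell+2}\bigr] < \infty$ (which the paper obtains via the Burkholder--Davis--Gundy inequality, as in the proof of \cref{proposition:moment_estimates} that you cite); with only the pointwise-in-$t$ bound, you would need $p>\ell+2$ to invoke uniform integrability instead, losing the boundary case $p = \ell+2$ covered by the lemma.
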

\begin{proof}
    The existence of a unique continuous strong solution~$Y\in \cont([0,T], \R^d)$ to~\eqref{eq:aux_lemma_SDE} follows from classical theory of stochastic differential equations,
    for example from~\cite[Theorem 3.5]{khasminskii2013stochastic} with the Lyapunov function~$x \mapsto \phi(x)^{\frac{2}{\ell + 2}}$.
    It remains to prove~\eqref{eq:bound_derivative_coviariance}.
    By It\^o's formula,
    it holds with $f(x) = \phi(x)^q$ for any $q > 0$ that
    \begin{align}
        \d f(Y_t)
          \label{eq:ito_power_y}
          &= - q  \phi(x) ^{q-1} \nabla \phi(Y_t)^\t \mathcal R(\Gamma_t) \nabla \phi(Y_t) \, \d t + \mathcal R(\Gamma_t) : \hessian f(Y_t) \, \d t + \nabla f(Y_t)^\t \sqrt{2 \mathcal R(\Gamma_t)} \, \d W_t.
    \end{align}
    By~\eqref{assump:grad_bounds} in~\cref{assumption:main},
    it holds for all $x \in \real^d$ that $\bigl\lvert \nabla f(x) \bigr\rvert \leq C |x|_*^{q(\ell + 2) - 1}$.
    Thus, writing~\eqref{eq:ito_power_y} in integral form,
    then taking the supremum and using the Burkholder--Davis--Gundy inequality,
    we obtain for all $t \in [0, T]$ that
    \begin{align*}
        \frac{1}{3} \expect \left[ \sup_{s \in [0, t]} \bigl\lvert f(Y_s) \bigr\rvert^2 \right]
        &\leq \expect \bigl\lvert f(Y_0)\rvert^2 + T \int_{0}^{t} \Bigl\lVert \mathcal R(\Gamma_s) \Bigr\rVert_{\rm F}^2 \expect \Bigl\lVert \hessian f(Y_s) \Bigr\rVert_{\rm F}^2  \, \d s \\
        &\qquad + 2 C_{\rm BDG} \int_{0}^{t}  \expect \left\lvert  \nabla f(Y_s)^\t \mathcal R\bigl(\Gamma(s)\bigr) \nabla f(Y_s) \right\rvert \, \d s
        \\
        &\leq C \expect \lvert Y_0 \rvert_*^{2q(\ell + 2)} + C\left(\norm{\Gamma}^2_{\cont([0, T], \mathcal X)} + 1 \right) \int_{0}^{t} \expect \lvert Y_s \rvert_*^{2q(\ell + 2) - 4} + \expect \lvert Y_s \rvert_*^{2q(\ell + 2) - 2} \, \d s.
    \end{align*}
    Using the lower bound in~\eqref{assump:function_bounds} and rearranging,
    we finally obtain
    \[
        \expect \left[ \sup_{s \in [0, t]} \bigl\lvert Y_{\revise{s}} \bigr\rvert_*^{2q(\ell + 2)} \right]
        \leq C \expect \lvert Y_0 \rvert_*^{2q(\ell + 2)} + C\left(\norm{\Gamma}^2_{\cont([0, T], \mathcal X)} + 1 \right) \int_{0}^{t} \expect \left[ \sup_{u \in [0, s]} \lvert Y_u \rvert_*^{2q(\ell + 2)} \right] \, \d s.
    \]
    From Gr\"onwall's inequality,
    it follows that
    \begin{equation}
        \label{eq:first_gronwall_well_posed}
        \expect \left[ \sup_{s \in [0, \revise{T}]} \bigl\lvert Y_{\revise{s}} \bigr\rvert_*^{2q(\ell + 2)} \right]
        \leq \revise{C \expect \left\lvert Y_0 \right\rvert_*^{2q(\ell + 2)}}
    \end{equation}
    for some \revise{constant $C$ depending on $\norm{\Gamma}_{\cont([0, T], \mathcal X)}$ and~$T$,
    among other parameters}.
    In particular, since $\mfl_0 \in \mathcal P_{\ell + 2}(\real^d)$,
    this inequality with~$q = \frac{1}{\ell + 2}$ together with dominated convergence imply that
    the functions $t \mapsto \expect [Y_t]$ and $t \mapsto \expect [Y_t \otimes Y_t]$ are continuous.
    Furthermore, using~\eqref{eq:first_gronwall_well_posed} with $q = \frac{1}{2}$
    we deduce that these functions are also differentiable on~$[0, T]$ because,
    by It\^o's lemma and dominated convergence,
    \begin{align}
        \label{eq:derivative_mean}
        \frac{1}{h} \Bigl( \expect [Y_{t+h}] - \expect [Y_t] \Bigr) &= - \frac{1}{h} \expect \left[ \int_{t}^{t+h} \mathcal R (\Gamma_u) \nabla \phi(Y_u) \, \d u \right]
        \xrightarrow[h \to 0]{} - \expect \Bigl[ \mathcal R (\Gamma_t) \nabla \phi(Y_t) \Bigr].
    \end{align}
    \revise{Similarly, applying It\^o's formula in the same manner as in~\eqref{eq:ito_cov} below},
    we obtain
    \begin{equation}
        \label{eq:derivative_square}
        \lim_{h \to 0} \frac{1}{h} \Bigl( \expect [Y_{t+h} \otimes Y_{t+h}] - \expect [Y_t \otimes Y_t] \Bigr)
        =
        - \expect \Bigl[ \bigl(\mathcal R (\Gamma_t) \nabla \phi(Y_t)\bigr) \otimes Y_t +  Y_t \otimes \bigl(\mathcal R (\Gamma_t) \nabla \phi(Y_t)\bigr) \Bigr]
        + 2\mathcal R (\Gamma_t).
    \end{equation}
    Another application of dominated convergence yields continuity of the derivatives.
    Finally,
    we bound the right-hand side of~\eqref{eq:derivative_mean} as follows:
    \begin{align*}
        \Bigl\lvert \expect \left[ \mathcal R (\Gamma_t) \nabla \phi(Y_t) \right] \Bigr\rvert
        &\leq \expect \Bigl[ \bigl\lVert \Gamma_t \bigr\rVert_{\rm F} \bigl\lvert \nabla \phi(Y_t) \bigr\rvert \Bigr]
        \leq  \u \bigl\lVert\Gamma \bigr\rVert_{\cont([0, T], \mathcal X)}  \expect  \bigl\lvert Y_t \bigr\rvert_*^{\ell + 1}  .
    \end{align*}
    Employing a similar reasoning for the right-hand side of~\eqref{eq:derivative_square},
    and using~\eqref{eq:first_gronwall_well_posed},
    we finally obtain~\eqref{eq:bound_derivative_coviariance}.
\end{proof}

\begin{lemma}
    \label{lemma:second_auxiliary_wellposedness}
    Suppose that $\phi \in \mathcal A(\ell)$ for $\ell \geq 0$ and $\mfl_0 \in \mathcal P_p(\real^d)$ for some \revise{$p \geq 4$}.
    Fix $x_0 \sim \mfl_0$ and $\xi \in [0, 1]$,
    and suppose that $\xl \in C\left([0,T], \R^d\right)$
    is a strong solution to
    \begin{equation}
        \label{eq:mckean_withxi}
        \left\{
        \begin{aligned}
            \d\xl_t &= - \xi \cov (\mfl_t) \nabla \phi(\xl_t) \, \d t + \sqrt{2 \xi \cov (\mfl_t) }  \, \d W_t,\\
            \mfl_t &= \Law(\xl_t),
        \end{aligned}
        \right.
    \end{equation}
    with initial condition $\xl_0 = x_0$.
    Then there is $\kappa > 0$ independent of~$\xi$ such that
    \begin{align}
        \label{eq:moment_bounds_mf}
        &\expect \left[\sup_{t \in [0, T]} \bigl\lvert \xnl{j}_t \bigr\rvert^{p} \right] \leq \kappa,
    \end{align}
    Finally, if \revise{furthermore} $p \geq \ell + 2$ and  $\cov (\mfl_0) \succ 0$,
    then there is $\eta > 0$ such that
    \begin{equation}
        \label{eq:no_collapse}
        \forall t \in [0, T],
        \qquad
        \cov(\mfl_t) \succcurlyeq \eta \I_d.
    \end{equation}
\end{lemma}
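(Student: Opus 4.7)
The plan is to split the proof into two parts: first, establishing the uniform-in-$\xi$ moment bound~\eqref{eq:moment_bounds_mf}, and second, proving the spectral lower bound~\eqref{eq:no_collapse}. For the moment bound, I would apply It\^o's formula to $f(\xl_t) = \phi(\xl_t)^q$ with $q = p/\bigl(2(\ell + 2)\bigr)$, mimicking the proof of~\cref{proposition:moment_estimates}. The drift contribution $-\xi q \, \phi(\xl_t)^{q-1} \nabla \phi(\xl_t)^\t \cov(\mfl_t) \nabla \phi(\xl_t)$ is nonpositive and can be dropped. The Hessian bound~\eqref{eq:aux_hessian}, the gradient bound $\lvert \nabla f(x) \rvert \leq C \lvert x \rvert_*^{q(\ell+2)-1}$, the inequality $\lVert \cov(\mfl_t) \rVert_{\rm F} \leq \expect \lvert \xl_t \rvert^2$, and the Burkholder--Davis--Gundy inequality applied to the martingale part then close the estimate, and Gr\"onwall's inequality delivers the conclusion. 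All terms on the right-hand side are linear in $\xi$ (with $\xi \leq 1$), so the resulting constant~$\kappa$ is independent of~$\xi$.

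For the non-degeneracy~\eqref{eq:no_collapse}, I would first derive a matrix-valued ODE for $t \mapsto \cov(\mfl_t)$. Taking expectations in the It\^o expansions of $\xl_t$ and $\xl_t \otimes \xl_t$ (dominated convergence being justified by the moment bound just established) and subtracting $\frac{\d}{\d t}\bigl[\mean(\mfl_t) \otimes \mean(\mfl_t)\bigr]$, a short calculation yields
\[
    \frac{\d \cov(\mfl_t)}{\d t}
    = 2 \xi \cov(\mfl_t) - \xi \cov(\mfl_t) A_t - \xi A_t^\t \cov(\mfl_t),
    \qquad A_t := \expect \bigl[\nabla \phi(\xl_t) \otimes \bigl(\xl_t - \mean(\mfl_t)\bigr) \bigr].
\]
On the maximal interval $[0, \tau)$ on which $\cov(\mfl_t) \succ 0$ (which is nonempty by continuity and the assumption $\cov(\mfl_0) \succ 0$), Jacobi's formula gives
\[
    \frac{\d}{\d t} \log \det \cov(\mfl_t)
    = 2 \xi d - 2 \xi \trace(A_t)
    = 2 \xi d - 2 \xi \, \expect \Bigl[ \bigl(\nabla \phi(\xl_t) - \nabla \phi(\mean(\mfl_t))\bigr) \cdot \bigl(\xl_t - \mean(\mfl_t)\bigr) \Bigr],
\]
where the subtraction of $\nabla \phi(\mean(\mfl_t))$ exploits that $\expect[\xl_t - \mean(\mfl_t)] = 0$.

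Applying the local Lipschitz bound~\eqref{eq:assump-f:lip-growth-gradient} together with the Cauchy--Schwarz inequality then yields
\[
    \bigl\lvert \trace(A_t) \bigr\rvert
    \leq \lipphi \, \expect \Bigl[\bigl(1 + \lvert \xl_t \rvert^{\ell} + \lvert \mean(\mfl_t) \rvert^{\ell}\bigr) \lvert \xl_t - \mean(\mfl_t) \rvert^2 \Bigr],
\]
which, by H\"older's inequality and the moment bound~\eqref{eq:moment_bounds_mf} with the hypothesis $p \geq \ell + 2$, is uniformly bounded on $[0, T]$, uniformly in~$\xi \in [0, 1]$. Hence $\frac{\d}{\d t} \log \det \cov(\mfl_t) \geq -C$ on $[0, \tau)$, so $\det \cov(\mfl_t) \geq \det \cov(\mfl_0) \, e^{-CT} > 0$, which precludes $\tau < T$ and promotes the estimate to all of $[0, T]$. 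To conclude, I would combine this lower bound on the determinant with the crude upper bound $\lambda_{\max}\bigl(\cov(\mfl_t)\bigr) \leq \trace\bigl(\cov(\mfl_t)\bigr) \leq \kappa^{2/p}$ via the elementary inequality $\lambda_{\min}(M) \geq \det M / \lambda_{\max}(M)^{d-1}$ valid for positive definite $M$, which extracts the spectral bound~\eqref{eq:no_collapse}.

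The hardest point will be the non-degeneracy step: the Jacobi-formula computation must be justified on a maximal interval where $\cov(\mfl_t) \succ 0$, and the resulting scalar lower bound on $\det \cov(\mfl_t)$ must be converted into a spectral bound through control of the largest eigenvalue. The moment bound itself is routine and parallels closely the argument in~\cref{proposition:moment_estimates}, while the identification of the matrix ODE for $\cov(\mfl_t)$ is a straightforward consequence of It\^o's formula once the uniform moments are in hand.
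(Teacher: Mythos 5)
Your proof is correct and follows essentially the same route as the paper. For the moment bound, the argument is identical: It\^o's formula applied to $\phi(\cdot)^q$ with $q = p/(2(\ell+2))$, dropping the nonpositive drift term, the Hessian estimate~\eqref{eq:aux_hessian}, the gradient growth bound, Burkholder--Davis--Gundy, and Gr\"onwall. For the non-degeneracy, both proofs derive the matrix ODE for $\cov(\mfl_t)$, apply Jacobi's formula, show $\trace(A_t)$ is integrable over $[0,T]$, and then convert the resulting lower bound on $\det\cov(\mfl_t)$ into a spectral bound via $\lambda_{\min}(M) \geq \det M / \lambda_{\max}(M)^{d-1}$ (the paper uses $\|M\|_{\rm F}^{d-1}$ in the denominator, you use $\lambda_{\max}^{d-1}$ with the trace bound; this is immaterial). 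The only cosmetic difference is in bounding $\trace(A_t)$: the paper bounds it directly through $\trace A \leq \sqrt{d}\,\|A\|_{\rm F}$ combined with the polynomial growth $|\nabla\phi(x)|\leq \widetilde\u|x|_*^{\ell+1}$, whereas you recenter $\nabla\phi$ at $\mean(\mfl_t)$ (legitimate since $\expect[\xl_t - \mean(\mfl_t)] = 0$) and invoke the local Lipschitz estimate~\eqref{eq:assump-f:lip-growth-gradient}. Both need moments of order $\ell+2$, so the hypotheses are used the same way. Your explicit treatment of the maximal interval on which $\cov(\mfl_t)\succ 0$ is a welcome addition in rigor, as the paper leaves this implicit.
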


\begin{proof}
    The statements~\eqref{eq:moment_bounds_mf} and~\eqref{eq:no_collapse} are obvious if $\xi = 0$.
    The proof for a fixed value of $\xi \in (0, 1]$ is the same for all values of $\xi$,
    \revise{with constants that can easily be fixed independently of $\xi$,}
    so for simplicity we assume from now on that $\xi = 1$.

    \paragraph{Proof of the bound~\eqref{eq:moment_bounds_mf}}
    The proof of this bound is similar to the proof of~\eqref{eq:first_gronwall_well_posed} in~\cref{lemma:cont-moments}.
    Let $f(x) = \phi(x)^q$ \revise{for $q = \frac{p}{2(\ell + 2)}$}.
    By It\^o's formula, it holds that
    \begin{align}
        \notag
        \d f(\xl_t)
          &= - \nabla \phi(\xl_t)^\t \cov(\mfl_t) \nabla f(\xl_t) \, \d t + \cov(\mfl_t) : \hessian f(\xl_t) \, \d t + \nabla f(\xl_t)^\t \sqrt{2 \cov(\mfl_t)} \, \d W_t \\
          \label{eq:ito_power}
          &= - q  \phi(\xl_t) ^{q-1} \nabla \phi(\xl_t)^\t \cov(\mfl_t) \nabla \phi(\xl_t) \, \d t + \cov(\mfl_t) : \hessian f(\xl_t) \, \d t + \nabla f(\xl_t)^\t \sqrt{2 \cov(\mfl_t)} \, \d W_t.
    \end{align}
    Writing this equation in integral form,
    taking the supremum and using the Burkholder--Davis--Gundy inequality,
    we obtain that
    \begin{align*}
        \frac{1}{3} \expect \left[ \sup_{s \in [0, t]} \bigl\lvert f(\xl_t) \bigr\rvert^2 \right]
        &\leq \expect  \bigl\lvert f(\xl_0) \bigr\rvert^2
        + T \int_{0}^{t} \bigl\lVert \cov(\mfl_s) \bigr\rVert_{\rm F}^2 \expect \bigl\lVert \hessian f(\xl_s) \bigr\rVert_{\rm F}^2 \, \d s \\
        &\qquad + 2C_{\rm BDG} \int_{0}^{t}  \expect \left\lvert  \nabla f(\xl_s)^\t \cov(\mfl_s) \nabla f(\xl_s) \right\rvert \, \d s.
    \end{align*}
    Let us bound the terms in the integrals on the right-hand side.
    Using the inequality $\bigl\lVert \cov(\mfl_t) \bigr\rVert_{\rm F} \leq \expect \lvert \xl_t \rvert^2$, the bound~\eqref{eq:aux_hessian},
    and then Jensen's inequality,
    we obtain
    \begin{align*}
        \bigl\lVert \cov(\mfl_s) \bigr\rVert_{\rm F}^2 \expect \bigl\lVert \hessian {f(\xl_s)} \bigr\rVert_{\rm F}^2
        &\leq
        C
        \left(\expect  \bigl\lvert \xl_s \bigr\rvert^2  \right)^2
        \expect \bigl\lvert \xl_s \bigr\rvert_*^{2q(\ell + 2) - 4} \\
        &\leq C \left(\expect  \bigl\lvert \xl_s \bigr\rvert^{2q(\ell + 2)}  \right)^{\frac{4}{2q(\ell + 2)}}
        \left(\expect  \bigl\lvert \xl_s \bigr\rvert_*^{2q(\ell + 2)}  \right)^{\frac{2q(\ell + 2) - 4}{2q(\ell + 2)}}
        = C\expect  \bigl\lvert \xl_s \bigr\rvert_*^{2q(\ell + 2)}.
    \end{align*}
    By a similar reasoning,
    it holds that
    \[
        \expect \left\lvert  \nabla f(\xl_s)^\t \cov(\mfl_s) \nabla f(\xl_s) \right\rvert
        \leq C\expect  \bigl\lvert \xl_s \bigr\rvert_*^{2q(\ell + 2)}.
    \]
    Therefore, using the lower bound in~\eqref{assump:function_bounds},
    we deduce that
    \[
        \expect \left[ \sup_{s \in [0, t]} \left\lvert \xl_s \right\rvert_*^{2q(\ell + 2)} \right]
        \leq C \expect \left\lvert \xl_0 \right\rvert_*^{2q(\ell + 2)}
        + C \int_{0}^{t} \expect  \left[ \sup_{u \in [0, s]} \bigl\lvert \xl_u \bigr\rvert_*^{2q(\ell + 2)} \right] \d s.
    \]
    \revise{Using} Gr\"onwall's inequality,
    we obtain~\eqref{eq:moment_bounds_mf}.

\paragraph{Proof of~\eqref{eq:no_collapse}}
Let $g(x) = x \otimes x$. By It\^o's formula, it holds that
\begin{align}
    \notag
    \d g(\xl_t)
    &=
    - \bigl(\cov(\mfl_t) \nabla \phi(\xl_t)\bigr) \otimes \xl_t \, \d t
    - \xl_t \otimes \bigl(\cov(\mfl_t) \nabla \phi(\xl_t)\bigr) \, \d t
    + 2 \cov(\mfl_t) \, \d t \\
    \label{eq:ito_cov}
    &\qquad
    + \xl_t \otimes \bigl(\sqrt{2\cov(\mfl_t)} \, \d W_t\bigr)
    + \bigl(\sqrt{2\cov(\mfl_t)} \, \d W_t\bigr) \otimes \xl_t \, .
\end{align}
Therefore, taking expectations, we deduce that
\[
    \expect g(\xl_t) - \expect g(\xl_0)
    = \int_{0}^{t} \revise{2}\cov(\mfl_s) - \expect \Bigl[ \bigl(\cov(\mfl_s) \nabla \phi(\xl_s)\bigr) \otimes \xl_s
    + \xl_s \otimes \bigl(\cov(\mfl_s) \nabla \phi(\xl_s)\bigr) \Bigr] \, \d s \,.
\]
On the other hand
\[
    \expect [\xl_t] - \expect [\xl_0]
    = - \int_{0}^{t}  \cov(\mfl_s) \expect \left[ \nabla \phi(\xl_s) \right]  \, \d s \, .
\]
Combining these equations,
and noting that $t \mapsto \cov(\mfl_t)$ is differentiable by~\cref{lemma:cont-moments},
we deduce that
\begin{align*}
    \frac{\d}{\d t}\cov(\mfl_t)
    &=
    2\cov(\mfl_t)
    - \expect \left[ \Bigl(\cov(\mfl_t) \nabla \phi(\xl_t)\Bigr) \otimes \Bigl( \xl_t - \mathcal M(\mfl_t) \Bigr) \right]
    - \expect \left[ \Bigl( \xl_t - \mathcal M(\mfl_t) \Bigr) \otimes \Bigl(\cov(\mfl_t) \nabla \phi(\xl_t)\Bigr)  \right].
\end{align*}
Inspired by~\cite[Lemma A.1]{MR4123680},
we define the Lyapunov function
\[
    \mathcal V_{\cov}(\mu) =  - \log \det \bigl(\cov(\mu) \bigr).
\]
Using Jacobi's formula for the determinant,
we deduce that
\[
    \frac{\d}{\d t} \mathcal V_{\cov}(\mfl_t) =
    \trace \left( \cov(\mfl_t)^{-1} \frac{\d}{\d t} \cov(\mfl_t)\right)
    = \trace \Bigl( \revise{2} \I_d - 2 \expect \Bigl[ \nabla \phi(\xl_t) \otimes \bigl(\xl_t - \mean(\mfl_t) \bigr) \Bigr] \Bigr).
\]
It follows that
\begin{align*}
    \mathcal V_{\cov}(\mfl_t) - \mathcal V_{\cov}(\mfl_0)
    &\leq \revise{2} t d  + 2 \sqrt{d} \int_{0}^{t} \expect \Bigl[ \left\lVert \nabla \phi(\xl_s) \otimes \bigl(\xl_s - \mean(\mfl_s) \bigr) \right\rVert_{\rm F} \Bigr] \\
    &= \revise{2} t d + 2 \sqrt{d} \int_{0}^{t} \expect \Bigl[ \left\lvert \nabla \phi(\xl_s) \right\rvert \cdot \left\lvert \xl_s - \mean(\mfl_s) \right\rvert \Bigr] \, \d s,
\end{align*}
\revise{where we used that $\trace A \leq \sqrt{d} \norm{A}_{\rm F}$ for any matrix~$A$.}
The integrand can be bounded from~\eqref{eq:moment_bounds_mf} with \revise{$p = 4 \vee (\ell + 2)$},
which concludes the proof since for any symmetric positive definite matrix~$A$
it holds that
\[
    \det(A) \leq \lambda_{\min}(A) \lambda_{\max}(A)^{d-1}
    \qquad \Rightarrow \qquad
    \lambda_{\min}(A) \geq \frac{\det A}{\lVert A \rVert_{\rm F}^{d-1}},
\]
where $\lambda_{\min}(A)$ and $\lambda_{\max}(A)$ are the minimum and maximum eigenvalues of~$A$.
\end{proof}

\subsection{\texorpdfstring{Proof of~\cref{proposition:well-posedness}}{Proof of well-posedness theorem}}
\label{sub:proof_wellposed}

The proof of well-posedness of the mean field dynamics is based on a classical fixed point argument,
applied in the vector space $\cont\bigl([0,T], \real^{d \times d}\bigr)$.
Similarly to the proof of \cite[Theorem 3.1]{carrillo2018analytical},
we first construct a map
\[
    \mathcal{T}\colon \cont\left([0,T], \mathcal X \right) \rightarrow  \cont\left([0,T], \mathcal X \right)
\]
whose fixed points correspond to solutions of \eqref{eq:mckean}.
This section follows closely the proof of~\cite[Theorem 2.4]{gerber2023meanfield}.

\paragraph{Step 1. Constructing the map~$\mathcal T$}
Consider the map
\begin{align*}
    \mathcal{T}\colon \cont([0,T], \mathcal X) &\rightarrow  \cont([0,T], \mathcal X) \\
    \Gamma &\mapsto \Bigl( t \mapsto \cov (\rho_t) \Bigr),
\end{align*}
where $\rho_t := {\rm Law}(Y_t)$ and $(Y_t)_{t \in [0, T]}$ is the unique solution to~\eqref{eq:aux_lemma_SDE} with matrix $\Gamma$.
By~\cref{lemma:cont-moments}, the map $\mathcal{T}$ is well-defined.
Fixed points of $\mathcal{T}$ correspond to solutions of the McKean-Vlasov SDE~\eqref{eq:mckean}.
The existence of a fixed point follows from applying the Leray-Schauder fixed point theorem \protect{\cite[Chapter 11]{gilbarg1977elliptic}} in the space~$C\left([0,T], \mathcal X \right)$,
once we have proved that $\mathcal T$ is compact and that the following set is bounded:
\begin{align}
    \label{eq:leray-schauder-set}
    \Bigl\{\Gamma \in \cont\bigl([0,T], \mathcal X\bigr) :  \exists \xi\in[0,1]\text{ such that } \Gamma=\xi \mathcal{T}\left(\Gamma\right)\Bigr\}.
\end{align}

\paragraph{Step 2. Showing that~$\mathcal T$ is compact}
To prove that $\mathcal{T}$ is a compact operator,
fix $R>0$ and consider the ball
\[
    B_R := \left\{ \Gamma\in  \cont([0,T], \mathcal X) : \norm{\Gamma}_{\cont([0, T], \mathcal X)} \le  R \right\}.
\]
By the Arzelà–Ascoli theorem,
\revise{the following embedding is compact}:
\begin{align*}
    \cont^{1}\left([0,T], \mathcal X\right) \hookrightarrow \cont\left([0,T], \mathcal X\right).
\end{align*}
Thus, it suffices to show that $\mathcal{T}(B_R)$ is bounded in $\cont^{1}\left([0,T], \mathcal X\right)$,
which follows immediately from the assertion~\eqref{eq:bound_derivative_coviariance} in~\cref{lemma:cont-moments}.

\paragraph{Step 3. Showing that the set~\eqref{eq:leray-schauder-set} is bounded}
To this end,
assume that $\Gamma \in  \cont([0,T], \mathcal X)$ satisfies
\begin{equation}
    \label{eq:assumption_boundedness}
    \Gamma = \xi \mathcal{T}(\Gamma)
\end{equation}
for some $\xi\in[0,1]$,
and let $(Y_t)$ denote the corresponding solution to~\eqref{eq:aux_lemma_SDE}.
By \eqref{eq:assumption_boundedness},
the stochastic process~$(Y_t)$ is also a solution to
\[
    \d Y_t = - \xi \cov(\rho_t) \nabla \phi(Y_t) \, \d t + \sqrt{2 \xi \cov(\rho_t)} \, \d W_t,
    \qquad \rho_t = {\rm Law}(Y_t)\,.
\]
By \cref{lemma:second_auxiliary_wellposedness},
it holds that $\norm{\cov(\rho_t)}$ is bounded uniformly in $[0, T]$ by a constant independent of~$\xi$,
and so the set~\eqref{eq:leray-schauder-set} is indeed bounded.
This establishes the existence of a fixed point of~$\mathcal T$.
Furthermore, \cref{lemma:second_auxiliary_wellposedness} yields the first and second moment bounds in~\eqref{eq:moment_bound_mfl},
whereas~\eqref{eq:bound_derivative_coviariance} in~\cref{lemma:cont-moments} yields the third bound in~\eqref{eq:moment_bound_mfl}.

\paragraph{Step 4a. Showing uniqueness when $\phi \in \mathcal A(0)$}
\revise{We first present a simple proof of uniqueness for the case~$\phi \in \mathcal A(0)$,
and postpone the general proof to \textbf{Step 4b}.}

Let $\Gamma$ and $\widehat{\Gamma}$ be two fixed points of $\mathcal{T}$ with corresponding solutions~$Y_t, \widehat{Y}_t$ of \eqref{eq:aux_lemma_SDE}.
By definition of $\mathcal{T}$ and since~$\cov(\mu)$ is symmetric and positive semidefinite for all~$\mu\in\mathcal{P}_2(\R^d)$,
it holds that $\mathcal{R}\left(\Gamma_t\right) = \Gamma_t$ and $\mathcal{R}\bigl(\widehat{\Gamma}_t\bigr) = \widehat{\Gamma}_t$ for all~$t\in[0,T]$.
Let~$\rho_t, \widehat \rho_t \in \mathcal P(\R^d)$ denote the marginal laws of $Y_t$ and $\widehat Y_t$,
respectively.
By the Burkholder--Davis--Gundy inequality,
we have for all $t \in [0, T]$
\begin{align}
    \frac{1}{2}\expect \left[
    \sup_{s \in [0, t]} \bigl\lvert Y_{s} - \widehat{Y}_{s} \bigr\rvert^{2} \right]
    &\leq T \int_{0}^{t} \expect  \left\lvert \cov (\rho_{s}) \nabla \phi(Y_{s}) - \cov (\widehat \rho_{s}) \nabla \phi(\widehat Y_{s}) \right\rvert^2 \, \d s
    \label{eq:bdg_well_posed_uniqueness}
    + 2 C_{\rm BDG} \int_{0}^t \left\lVert \sqrt{\cov(\rho_{s})} - \sqrt{\cov(\widehat \rho_{s})} \right\rVert_{\rm F}^2 \, \d s.
\end{align}
By \cref{lemma:wasserstein_stability_estimates},
together with the inequality $W_p(\rho_s, \widehat \rho_s)^p \leq \expect \lvert Y_s - \widehat Y_s \rvert^p$,
which follows from the definition of the Wasserstein distance,
we have that
\[
    \left\lVert \sqrt{\cov(\rho_{s})} - \sqrt{\cov(\widehat \rho_{s})} \right\rVert_{\rm F}^2
    \leq 2 W_2\bigl(\rho_{s}, \widehat \rho_{s}\bigr)^2
    \leq 2 \expect \lvert Y_s - \widehat Y_s \rvert^2.
\]
For the first term on the right-hand side of~\eqref{eq:bdg_well_posed_uniqueness},
we use the triangle inequality to obtain
\begin{align*}
    &\frac{1}{2} \expect \left\lvert \cov (\rho_{s}) \nabla \phi(Y_{s}) - \cov (\widehat \rho_{s}) \nabla \phi(\widehat Y_{s}) \right\rvert^2 \\
    &\qquad \leq
    \expect \left\lvert \bigl(\cov (\rho_{s}) - \cov (\widehat \rho_{s})\bigr) \nabla \phi(Y_{s})  \right\rvert^2
    + \expect \left\lvert \cov (\widehat \rho_{s}) \bigl( \nabla \phi(Y_{s}) - \nabla \phi(\widehat Y_{s}) \bigr)  \right\rvert^2 \\
    &\qquad \leq
    C \bigl\lVert \cov (\rho_{s}) - \cov (\widehat \rho_{s}) \bigr\rVert_{\rm F}^2
    \expect \lvert Y_s \rvert_*^2
    + \lipphi \bigl\lVert \cov (\widehat \rho_{s}) \bigr\rVert_{\rm F}^2 \, \expect \left\lvert Y_{s} - \widehat Y_{s} \right\rvert^2 \\
    &\qquad \leq C \Bigl( W_2(\rho_{s}, \delta_0) + W_2(\widehat \rho_{s}, \delta_0) \Bigr)^2
    W_2(\rho_{s}, \widehat \rho_{s})^2  +
    C \expect \left\lvert Y_{s} - \widehat Y_{s} \right\rvert^2
    \leq C \expect \left\lvert Y_{s} - \widehat Y_{s} \right\rvert^2,
\end{align*}
where we used the Lipschitz continuity~\eqref{eq:assump-f:lip-growth-gradient},
the Wasserstein stability estimate for the covariance in~\cref{lemma:wasserstein_stability_estimates},
and the moment bound established in~\cref{lemma:second_auxiliary_wellposedness}.
We conclude that for all $t \in [0, T]$,
it holds that
\[
    \expect \left[ \sup_{s \in [0, t]} \bigl\lvert Y_{s} - \widehat Y_{s} \bigr\rvert^{2}  \right]
    \leq C \int_{0}^{t} \expect \left[ \sup_{u \in [0, s]} \bigl\lvert Y_{u} - \widehat Y_{u} \bigr\rvert^{2} \right] \, \d s.
\]
By Gr\"onwall's lemma,
it follows that the left-hand side is 0 for all~$t \in [0, T]$,
and so $\Gamma_t = \widehat \Gamma_t$ for all~$t \in [0, T]$,
which concludes the proof of uniqueness.

\paragraph{Step 4b. Showing uniqueness when $\phi \in \mathcal A(\ell)$ with $\ell > 0$}
Recall that $Y_t$ and $\widehat Y_t$ satisfy
\begin{align*}
    \d Y_t &= - \Gamma_t \nabla \phi(Y_t) + \sqrt{2 \Gamma_t} \, \d W_t, \\
    \d \widehat Y_t &= - \widehat \Gamma_t \nabla \phi(\widehat Y_t) + \sqrt{2 \widehat \Gamma_t} \, \d W_t.
\end{align*}
Applying It\^o's formula to $f(y, \widehat y, t) = \frac{1}{2} \bigl( y - \widehat y \bigr)^\t \Gamma_t^{-1} \bigl( y - \widehat y \bigr)$,
and noting that $f(Y_0, \widehat Y_0, 0) = 0$,
we obtain that
\begin{align*}
    \expect f(Y_t, \widehat Y_t, t)
    &= - \expect \int_{0}^{t} \Bigl\langle Y_s - \widehat Y_s, \nabla \phi(Y_s) - \nabla \phi (\widehat Y_s) \Bigr\rangle \, \d s
    + \expect \int_{0}^{t} \Bigl\langle Y_s - \widehat Y_s, \Gamma_s^{-1} (\widehat \Gamma_s - \Gamma_s) \nabla \phi (\widehat Y_s) \Bigr\rangle \, \d s \\
    &\qquad
    + \int_{0}^{t}
    \begin{pmatrix}
        \Gamma_s & \sqrt{\Gamma_s} \sqrt{\widehat \Gamma_s} \\
        \sqrt{\widehat \Gamma_s} \sqrt{\Gamma_s} & \widehat \Gamma_s
    \end{pmatrix}
    :
    \begin{pmatrix}
        \Gamma_s^{-1} & - \Gamma_s^{-1} \\
        - \Gamma_s^{-1} & \Gamma_s^{-1}
    \end{pmatrix}
    \, \d s \\
    &\qquad \revise{-} \expect \int_{0}^{t} \frac{1}{2} \bigl( Y_s - \widehat Y_s \bigr)^\t \Gamma_s^{-1} \frac{\d \Gamma_s}{\d s} \Gamma_s^{-1} \bigl( Y_s - \widehat Y_s \bigr)^\t \, \d s.
\end{align*}
Let us bound the four terms on the right-hand side.
\begin{itemize}
    \item
        By~\cref{lemma:convexity},
        it holds that
        \[
            - \Bigl\langle Y_s - \widehat Y_s, \nabla \phi(Y_s) - \nabla \phi (\widehat Y_s) \Bigr\rangle
            \leq c_2  \left\lvert Y_s - \widehat Y_s \right\rvert^2.
        \]

    \item
        For the second term,
        we have from the Cauchy--Schwarz inequality
        \begin{align*}
            \expect \Bigl\langle Y_s - \widehat Y_s, \Gamma_s^{-1} (\widehat \Gamma_s - \Gamma_s) \nabla \phi (\widehat Y_s) \Bigr\rangle
            &\leq  \Bigl\lVert \Gamma_s^{-1} \Bigr\rVert_{\rm F} \Bigl\lVert\widehat \Gamma_s - \Gamma_s\Bigr\rVert_{\rm F}
            \left( \expect \lvert Y_s - \widehat Y_s \rvert^2 \right)^{\frac{1}{2}}
            \Bigl( \expect \lvert \nabla \phi(Y_s) \rvert^2 \Bigr)^{\frac{1}{2}} \\
            &\leq C \Bigl\lVert \cov(\widehat \rho_s) - \cov(\rho_s) \Bigr\rVert_{\rm F} \left( \expect \lvert Y_s - \widehat Y_s \rvert^2 \right)^{\frac{1}{2}}
            \leq C \expect \lvert Y_s - \widehat Y_s \rvert^2,
        \end{align*}
        where we used the moment bounds from~\cref{lemma:second_auxiliary_wellposedness},
        the Wasserstein stability estimate~\eqref{eq:stab_wcov_simple} from~\cref{lemma:wasserstein_stability_estimates},
        and the inequality $W_2(\widehat \rho_s, \rho_s)^2 \leq \expect \bigl\lvert \widehat Y_s - Y_s \bigr\rvert^2$,
        which follows from the definition of the Wasserstein distance.
        \revise{%
            Note that $\expect \lvert \nabla \phi(Y_s) \rvert^2$ is indeed finite by~\eqref{eq:moment_bounds_mf}
            and the assumption that $\mfl_0 \in \mathcal P_{2(\ell + 2)}$.
        }

    \item
        For the third term,
        a simple calculation gives that
        \begin{align*}
            \begin{pmatrix}
                \Gamma_s & \sqrt{\Gamma_s} \sqrt{\widehat \Gamma_s} \\
                \sqrt{\widehat \Gamma_s} \sqrt{\Gamma_s} & \widehat \Gamma_s
            \end{pmatrix}
            :
            \begin{pmatrix}
                \Gamma_s^{-1} & - \Gamma_s^{-1} \\
                - \Gamma_s^{-1} & \Gamma_s^{-1}
            \end{pmatrix}
            &= \trace \left( \Bigl(\sqrt{\Gamma_s} - \sqrt{\widehat \Gamma_s}\Bigr) \Bigl(\sqrt{\Gamma_s} - \sqrt{\widehat \Gamma_s}\Bigr) \Gamma_s^{-1} \right) \\
            &= \left\lVert \sqrt{\Gamma_s^{-1}}\Bigl(\sqrt{\Gamma_s} - \sqrt{\widehat \Gamma_s}\Bigr) \right\rVert_{\rm F}^2
            \leq C \expect \left\lvert Y_t - \widehat Y_t \right\rvert^2.
        \end{align*}
        where we used the estimate~\eqref{eq:no_collapse} in~\cref{lemma:second_auxiliary_wellposedness} and
        the Wasserstein stability estimate~\eqref{eq:wmean-wcov-emp-local-lip} from \cref{lemma:wasserstein_stability_estimates} in the last inequality.

    \item
        Finally,
        it immediately follows from the bounds on $\Gamma_s = \cov(\rho_s)$ and $\widehat \Gamma_s = \cov(\widehat \rho_s)$ given~\cref{lemma:second_auxiliary_wellposedness},
        as well as the upper bound~\eqref{eq:bound_derivative_coviariance} on the time derivative of $\Gamma_s$,
        that
        \[
            \left\lvert \frac{1}{2} \bigl( Y_s - \widehat Y_s \bigr)^\t \Gamma_s^{-1} \frac{\d \Gamma_s}{\d s} \Gamma_s^{-1} \bigl( Y_s - \widehat Y_s \bigr)^\t \right\rvert
            \leq C \left\lvert Y_s - \widehat Y_s \right\rvert^2.
        \]
\end{itemize}
Combining all the bounds,
we deduce that
\[
    \expect \left\lvert Y_t - \widehat Y_t \right\rvert^2
    \leq C \expect f(Y_t, \widehat Y_t, t)
    \leq C \int_{0}^{t} \expect \left\lvert Y_s - \widehat Y_s \right\rvert^2 \, \d s.
\]
Gr\"onwall's inequality then gives that the left-hand side is 0 for all times,
which concludes the proof.

\section{Proof of auxiliary results}
\label{sec:proof_of_aux}

\subsection{\texorpdfstring{Proof of~\cref{lemma:wasserstein_stability_estimates}}{Proof of auxiliary lemma on Wasserstein stability estimate}}
\label{sub:proof_of_stab}

We prove the statements separately,
using exactly the same approach as in~\cite{gerber2023meanfield}.

\paragraph{Proof of~\eqref{eq:stab_wcov_simple}}
By the triangle inequality,
it holds that
\begin{equation}
    \label{eq:two_terms_stab_cov}
    \bigl\lVert \cov(\mu) - \cov(\nu) \bigr\rVert_{\rm F}
    \leq
    \bigl\lVert \mu [x\otimes x] - \nu[x \otimes x] \bigr\rVert_{\rm F}
    + \bigl\lVert \mean(\mu) \otimes \mean(\mu) - \mean(\nu) \otimes \mean(\nu) \bigr\rVert_{\rm F}.
\end{equation}
Let $\pi \in \Pi(\mu, \nu)$ denote an arbitrary coupling between $\mu$ and $\nu$.
By Jensen's inequality, it holds that
\[
    \bigl\lVert \mu [x\otimes x] - \nu[x \otimes x] \bigr\rVert_{\rm F}
    \leq \iint_{\R^d \times \R^d} \lVert x \otimes x - y \otimes y \rVert_{\rm F} \, \pi (\d x \, \d y).
\]
Recall that for all $(x, y) \in \R^d \times \R^d$, it holds that
\begin{align}
    \notag
    \left\lVert x \otimes x - y \otimes y \right\rVert_{\rm F}
        &= \left\lVert x (x - y)^\t + (x - y) y^\t \right\rVert_{\rm F} \\
        \label{eq:aux_weighted_mean}
        &\leq \left\lVert x (x - y)^\t \right\rVert_{\rm F} + \left\lVert (x - y) y^\t \right\rVert_{\rm F}
        = \abs{x} \cdot \abs{x - y} + \abs{y} \cdot \abs{x - y}
        = \bigl(\abs{x} + \abs{y}\bigr) \abs{x - y}.
\end{align}
Therefore, we deduce that
\[
    \bigl\lVert \mu [x\otimes x] - \nu[x \otimes x] \bigr\rVert_{\rm F}
    \leq
    \sqrt{\iint_{\R^d \times \R^d} \bigl(\abs{x} + \abs{y}\bigr)^2 \, \pi (\d x \, \d y)}
    \sqrt{\iint_{\R^d \times \R^d} \abs{x - y}^2 \, \pi (\d x \, \d y)}.
\]
Besides,
by the triangle inequality,
it holds that
\begin{equation*}
    \begin{aligned}
        \sqrt{\iint_{\R^d \times \R^d} \bigl(\abs{x} + \abs{y}\bigr)^2 \, \pi (\d x \, \d y)}
        \leq \sqrt{\int_{\R^d}\abs{x}^2 \, \mu(\d x)} + \sqrt{\int_{\R^d}\abs{y}^2 \, \nu(\d y)}
        = W_2(\mu,\delta_0) + W_2(\nu,\delta_0).
    \end{aligned}
\end{equation*}
Infimizing over all couplings $\pi \in \Pi(\mu, \nu)$,
we conclude that
\[
   \bigl\lVert \mu [x\otimes x] - \nu[x \otimes x] \bigr\rVert_{\rm F}
   \leq \Bigl( W_2(\mu, \delta_0) + W_2(\nu, \delta_0) \Bigr) W_2(\mu, \nu).
\]
Applying a similar reasoning to the second term on the right-hand side of~\eqref{eq:two_terms_stab_cov},
we deduce~\eqref{eq:stab_wcov_simple}.

\paragraph{Proof of~\eqref{eq:wmean-wcov-emp-local-lip}}
It is sufficient to check the claim for measures $\mu$ and $\nu$ of the form
\begin{equation}
    \label{eq:form_empirical}
    \mu^J = \frac{1}{J} \sum_{j=1}^{J} \delta_{\xn{j}},
    \qquad
    \nu^J = \frac{1}{J} \sum_{j=1}^{J} \delta_{\yn{j}},
    \qquad J \in \N^+.
\end{equation}
Indeed,
assume that the statement holds for all such pairs of probability measures,
and take $(\mu, \nu) \in \mathcal P_{2}(\real^d) \times \mathcal P_{2}(\real^d)$.
By~\cite[Theorem~6.18]{MR2459454},
there exists a sequence $\bigl\{(\mu^J, \nu^J)\bigr\}_{J \in \N^+}$ in $\mathcal P_{2}(\real^d) \times \mathcal P_{2}(\real^d)$
such that $W_2(\mu^J, \mu) \to 0$ and~$W_2(\nu^J, \nu) \to~0$ in the limit as $J \to \infty$.
Then
\begin{align*}
    \norm{
        \sqrt{\cov(\mu}) -
        \sqrt{\cov (\nu)}
    }_{\rm F}
    \leq
    &\norm{
        \sqrt{\cov(\mu^J)} -
        \sqrt{\cov(\nu^J)}
    }_{\rm F} \\
    &\qquad +
    \norm{
        \sqrt{\cov(\mu)} -
        \sqrt{\cov(\mu^J)}
    }_{\rm F}
    +
    \norm{
        \sqrt{\cov(\nu)} -
        \sqrt{\cov(\nu^J)}
    }_{\rm F}.
\end{align*}
The first term is bounded from above by $\sqrt{2}  W_2(\mu^J, \nu^J)$ by the base case,
while the other two terms converge to 0 in the limit as~$J \to \infty$ by~\eqref{eq:stab_wcov_simple} in~\cref{lemma:wasserstein_stability_estimates}.
Taking the limit $J \to \infty$, we deduce that
\[
    \norm{
        \sqrt{\cov(\mu)} -
        \sqrt{\cov(\nu)}
    }_{\rm F}
    \leq
    \sqrt{2}  W_2(\mu, \nu).
\]

\paragraph{Proof of the statement for empirical measures}
By~\cite[p.5]{MR1964483},
the Wasserstein distance between empirical measures $\mu^J$ and $\nu^J$ of the form~\eqref{eq:form_empirical}
is equal to
\[
    W_2(\mu^J, \nu^J)
    = \min_{\sigma \in \mathcal S_J} \left(\frac{1}{J} \sum_{j=1}^{J} \left\lvert \xn{j} - Y^{\sigma(j)} \right\rvert^2\right)^{\frac{1}{2}},
\]
where $\mathcal S_J$ denotes the set of permutations in $\{1, \dotsc, J\}$.
Thus, the claim will follow if we can prove that,
for any pair of probability measures $(\mu^J, \nu^J) \in \mathcal P_{2}(\real^d) \times \mathcal P_2(\real^d)$ of the form~\eqref{eq:form_empirical},
it holds that
\begin{equation}
    \label{eq:target_inequality}
    \norm{
        \sqrt{\cov(\mu^J)} -
        \sqrt{\cov (\nu^J)}
    }_{\rm F}
    \leq
    \sqrt{2} \, \left(\frac{1}{J} \sum_{j=1}^{J} \left\lvert \xn{j} - \yn{j} \right\rvert^p\right)^{\frac{1}{p}}.
\end{equation}
We henceforth drop the superscript~$J$ in $\mu^J, \nu^J$ for simplicity,
and write~$\mathbf X = \left(\xn{1}, \dotsc, \xn{J}\right)$ and~$\mathbf Y = \left(\yn{1}, \dotsc, \yn{J} \right)$.
The proof of~\eqref{eq:target_inequality} presented below follows the lines of a proof shown to me by N.\ J.\ Gerber,
who proved this inequality in preliminary work with F.\ Hoffmann which eventually lead to the preprint~\cite{gerber2023meanfield}.
First note that
\[
\cov(\mu)
= M_{\mathbf X} M_{\mathbf X}^\t, \qquad
M_{\mathbf X} := \frac{1}{\sqrt{J}}
\begin{pmatrix}
    \left(\xn{1} - \mean(\mu) \right)
    & \hdots &
    \left(\xn{J} - \mean(\mu) \right)
\end{pmatrix}.
\]
Proceeding in the same manner,
we construct a matrix $M_{\mathbf{Y}} \in \R^{d\times J}$ such that $\mean(\nu)=M_{\mathbf{Y}} M_{\mathbf{Y}}^\t$.
A result by Araki and Yamagami~\cite{Araki1981},
later generalized by Kittaneh~\cite{MR0787884} and Bhatia~\cite{MR1275617},
states for any two matrices~$A$ and $B$ with the same shape,
it holds that
\[
    \norm{\sqrt{A^\t A}-\sqrt{B^\t B}}_{\rm F} \le \sqrt{2}\norm{A-B}_{\rm F}.
\]
See also~\cite[Theorem VII.5.7]{MR1477662} for a textbook presentation.
This result,
applied with $A = M_{\bf X}$ and $B = M_{\bf Y}$,
yields
\begin{align*}
    \norm{
        \sqrt{\cov(\mu)} -
        \sqrt{\cov (\nu) }}_{\rm F}
    &\le
    \sqrt{2} \left(\frac{1}{J} \sum_{j=1}^J \abs[\Big]{
            \left(\xn{j} - \mean(\mu) \right)
    -  \left(\yn{j} - \mean(\nu) \right)}^2 \right)^{\frac{1}{2}} \\
    &= \min_{a \in \real^d}
    \sqrt{2} \left(\frac{1}{J} \sum_{j=1}^J \abs[\Big]{
            \xn{j} - \yn{j} - a}^2 \right)^{\frac{1}{2}}
    \le \sqrt{2}
    \left( \frac{1}{J} \sum_{j=1}^J \abs*{ \xn{j} -  \yn{j} }^2 \right)^{\frac{1}{2}},
\end{align*}
which shows~\eqref{eq:target_inequality} and completes the proof.

\subsection{\texorpdfstring{Proof of~\cref{lemma:convergence_covariance_iid}}{Proof of auxiliary lemma on Monte Carlo error}}
\label{sub:proof_of_convergence_covarianec_iid}
Equation~\eqref{eq:iid_convergence_sqrt} follows from~\eqref{eq:iid_convergence_cov},
and from an inequality due to van Hemmen and Ando~\cite{vanHemmen1980},
see also~\cite[Problem~X.5.5]{MR1477662},
which in view of the assumption $\cov(\mu) \succcurlyeq \eta \I_d \succ 0$
gives that
\[
    \left\lVert \sqrt{\cov(\empmfl)} - \sqrt{\cov(\mu)} \right\rVert_{\rm F}
    \leq \frac{1}{\eta} \Bigl\lVert \cov(\empmfl) - \cov(\mu) \Bigr\rVert_{\rm F}.
\]
The bound~\eqref{eq:iid_convergence_cov} follows from usual Monte Carlo estimates.
Using the triangle inequality, we have that
\begin{align*}
    \Bigl\lVert \cov(\empmfl) - \cov(\mu) \Bigr\rVert_{\rm F}^p
    &= \Bigl\lVert \empmfl \left[x \otimes x\right] - \mu \left[x \otimes x\right] - \mean(\empmfl) \otimes \mean(\empmfl) + \mean(\mu) \otimes \mean(\mu) \Bigr\rVert_{\rm F}^p \\
    &\leq 2^{p-1}\Bigl\lVert \empmfl \left[x \otimes x\right] - \mu \left[x \otimes x\right] \Bigr\rVert_{\rm F}^p
    + 2^{p-1}\Bigl\lVert \mean(\empmfl) \otimes \mean(\empmfl) - \mean(\mu) \otimes \mean(\mu) \Bigr\rVert_{\rm F}^p.
\end{align*}
Convergence to zero of the expectation of the first term with rate $J^{-\frac{p}{2}}$ follows from the Marcinkiewicz--Zygmund inequality.
For the second term,
we use~\eqref{eq:aux_weighted_mean}
to obtain that
\begin{align*}
    &\expect \Bigl\lVert \mean(\empmfl) \otimes \mean(\empmfl) - \mean(\mu) \otimes \mean(\mu) \Bigr\rVert_{\rm F}^p \\
    &\qquad \leq \expect \left[ \bigl(\abs*{\mean(\empmfl)} + \abs{\mean(\mu)}\bigr)^p \abs{\mean(\empmfl) - \mean(\mu)}^p \right] \\
    &\qquad \leq \left( \expect \left[ \bigl(\abs*{\mean(\empmfl)} + \abs{\mean(\mu)}\bigr)^{2p} \right] \expect \Bigl[ \abs[\big]{\mean(\empmfl) - \mean(\mu)}^{2p} \Bigr] \right)^{\frac{1}{2}} \\
    &\qquad \leq C J^{-\frac{p}{2}} \left( \expect \left[ \bigl(\abs*{\mean(\empmfl)} + \abs{\mean(\mu)}\bigr)^{2p} \right] \right)^{\frac{1}{2}},
\end{align*}
where we used again the Marcinkiewicz--Zygmund inequality.
The claim follows since $\expect \Bigl[ \left\lvert \mean(\empmfl) \right\rvert^{2p} \Bigr] \leq \expect \Bigl[ \bigl\lvert \xnl{j} \bigr\rvert^{2p} \Bigr]$ by Jensen's inequality.

\paragraph{Acknowledgements}
The author is grateful to Zhiyan Ding, Nicolai Gerber, Franca Hoffmann, Qin Li and Julien Reygner for useful discussions and suggestions,
\revise{and also to the anonymous reviewers for valuable suggestions}.
UV is partially supported by the European Research Council (ERC) under the European Union's Horizon 2020 research and innovation programme (grant agreement No 810367),
and by the Agence Nationale de la Recherche under grants ANR-21-CE40-0006 (SINEQ) and ANR-23-CE40-0027 (IPSO).

\printbibliography
\end{document}